\theoremstyle{plain}
\newtheorem{thm}{Theorem}[section]
\newtheorem{lem}[thm]{Lemma}
\newtheorem{prop}[thm]{Proposition}
\theoremstyle{definition}
\theoremstyle{remark}
\newtheorem{rmk}[thm]{Remark}
\newcommand{\cB}{\mathcal{B}}
\newcommand{\cM}{\mathcal{M}}
\newcommand{\cV}{\mathcal{V}}
\newcommand{\bA}{\mathbf{A}}
\newcommand{\bF}{\mathbf{F}}
\DeclareMathOperator{\Tr}{Tr}
\DeclareMathOperator{\Spec}{Spec}
\DeclareMathOperator{\GL}{GL}
\DeclareMathOperator{\PGL}{PGL}
\DeclareMathOperator{\SL}{SL}
\DeclareMathOperator{\PSL}{PSL}
\DeclareMathOperator{\SO}{SO}
\DeclareMathOperator{\Res}{Res}
\newcommand*{\df}{\mathrel{\vcenter{\baselineskip0.5ex \lineskiplimit0pt
                     \hbox{\scriptsize.}\hbox{\scriptsize.}}} =}
\providecommand{\twomat}[4]{\left(\begin{matrix}#1&#2\\#3&#4\end{matrix}\right)}
\providecommand{\stwomat}[4]{\left(\begin{smallmatrix}#1&#2\\#3&#4\end{smallmatrix}\right)}
\providecommand{\twovec}[2]{\left(\begin{matrix}#1\\#2\end{matrix}\right)}
\newcommand{\QQ}{\mathbf{Q}}
\newcommand{\FF}{\mathbf{F}}
\newcommand{\CC}{\mathbf{C}}
\newcommand{\ZZ}{\mathbf{Z}}
\newcommand{\PP}{\mathbf{P}}
\newcommand{\RR}{\mathbf{R}}
\newcommand{\OO}{\mathbf{O}}
\newcommand{\HH}{\mathbf{H}}
\DeclareMathOperator{\short}{short}
\DeclareMathOperator{\llong}{long}
\DeclareMathOperator{\SU}{SU}
\DeclareMathOperator{\Spin}{Spin}
\DeclareMathOperator{\U}{U}
\DeclareMathOperator{\surj}{surj}
\DeclareMathOperator{\Id}{Id}
\DeclareMathOperator{\Gr}{Gr}
\begin{document}
%Title
\title{The moduli space of representations of the modular group into $G_2$}

\author{Angelica Babei}
\address{Howard University}
\email{angelica.babei@howard.edu}
\author{Andrew Fiori}
\address{University of Lethbridge}
\email{andrew.fiori@uleth.ca}
\author{Cameron Franc}
\address{McMaster University}
\email{franc@math.mcmaster.ca}

\thanks{The authors gratefully acknowledge financial support received from NSERC through their respective Discovery Grants, the financial support of the University of Lethbridge, and the use of computational resources made available through WestGrid and Compute Canada.}
\date{}

\begin{abstract}
  In this paper we construct a large four-dimensional family of representations of the modular group into $G_2$. Precisely, this family is an etale cover of degree $96$ of an open subset of the moduli space of such representations. This moduli space has two main components, of dimensions one and four. The one-dimensional component consists of well-studied rigid representations, in the sense of Katz. We focus on the four-dimensional component which consists of representations that are not rigid. We also provide algebraic conditions to ensure that the specializations surject onto $G_2(\mathbf{F}_p)$ for primes $p\geq 5$. These representations give new examples of $\phi$-congruence subgroups of the modular group as introduced in \cite{bff}.
\end{abstract}
\maketitle

\setcounter{tocdepth}{1}
\tableofcontents

\section{Introduction}
In this paper we construct a large family of representations, $\phi$ , of the modular group $\Gamma = \PSL_2(\ZZ)$ into $G_2$. Such a representation is determined by choosing elements of $G_2$ of order two and three, and mapping respective free-generators
\begin{align*}
S &= \twomat {0}{-1}{1}{0}, &R &= \twomat{0}{-1}{1}{1},
\end{align*}
for $\Gamma$ to these elements. We are particularly interested in representations that surject onto $G_2(\FF_p)$ for primes $p$. This is motivated by our earlier paper \cite{bff} where we introduced $\phi$-families of subgroups of $\Gamma$ by reducing linear representations $\phi$ of $\Gamma$ modulo primes. In \cite{bff} we considered two main examples: a family arising from an indecomposable but not irreducible representation of rank two, as well as families arising from maps into the symplectic group of genus two. The symplectic case is interesting because by results of \cite{LS}, most choices of order two and three elements in classical groups lead to surjective representations of $\Gamma$, whereas for the $g=2$ symplectic group, if $p$ is large enough then only approximately half the choices yield surjective representations. Thus, the symplectic group is somewhat exceptional and it is harder to construct representations onto it.

In \cite{bff} we were able to write down a large supply of such symplectic representations by using the description of the character variety of rank four irreducible representations of $\Gamma$ described by Tuba-Wenzl \cite{TW}. By \cite{TW}, character varieties of $\Gamma$ are well-understood up to rank five, and Le Bruyn has written a number of interesting papers on higher rank cases, see for example \cite{lebruyn}. Thus, the case of representations of $G_2$ of rank seven is an interesting exceptional case of examples that lies just outside of the known literature of representations of the modular group, but which intersects the active area of study of $G_2$-local systems.

As for the growing literature on $G_2$-local systems, much of it has focused on \emph{rigid} representations of the full free group $F_2$ on two generators, a subject introduced by Katz \cite{katz}. See for example the papers \cite{CDRS,DR1,DR2} for some work on this subject. Boalch and Paluba \cite{Boalch1} have studied $G_2$-character varieties obtained from the fundamental group of a four-punctured sphere, where three of the conjugacy classes of local monodromies are fixed to consist of elements of order $3$ in $G_2$, whereas the fourth local monodromy can be a more general semisimple conjugacy class. As this fourth conjugacy class is allowed to vary, Boalch-Paluba show that the resulting two-parameter family of complex surfaces is related to a subfamily of the Fricke family of cubic surfaces
\[
xyz + x^2+y^2+z^2+b_1x+b_2y+b_3z=0.
\]
These examples of $G_2$-character varieties parameterize local systems that are minimally nonrigid in the sense of Katz, and so provide a suitably rich class of examples for studying local systems beyond the rigid case. In a similar vein, our examples below correspond to character varieties of a three-punctured sphere where two local monodromies are fixed to lie in conjugacy classes of order $2$ and $3$, and the third class is allowed to vary.

More precisely, as we will see below, there are two disconnected components of irreducible representations $\Gamma \to G_2$, one of which is one-dimensional while the other is four-dimensional. It transpires that the one-dimensional component consists of rigid representations, and none of these lead to surjections onto $G_2(\FF_p)$. On othe other hand, the larger component $X$ consists of non-rigid representations and to the best of our knowledge, it has not received as much study as the rigid component. Therefore, in our analysis below, we will ignore the more well-studied rigid component and we focus our attention on the non-rigid component $X$.\footnote{Technically, our variety $X$ of representations will contain many representations that are not irreducible, though the irreducibles are dense in $X$. We use the adjective ``irreducible'' in this introduction mainly to rule out trivial components such as the trivial representation, et cetera.}

Our approach to studying $X$ is to realize it concretely as a double-quotient of $G_2$ of the following form:
\[
X = \GL_2^{\short}\backslash G_2/\SO_{\HH}.
\]
In this optic, we are considering simultaneous conjugacy classes of the generating elements of $\Gamma$ of orders $2$ and $3$. Ultimately we shall prove the following:
\begin{thm}
  \label{t:main1}
  There exists a nonempty open subset $Z \subseteq X$ in the large component and a cover $\widetilde X_0 \to Z$ described concretely by the family of representations:
\begin{align*}
\phi(S) &= \left(\begin{smallmatrix}
a_{1}^{2} - a_{2}^{2} & 0 & 0 & 0 & 2 a_{1} a_{2} & 0 & 0 \\
0 & b_{1}^{2} - b_{2}^{2} & 0 & 0 & 0 & 2 b_{1} b_{2} & 0 \\
0 & 0 & c_{1}^{2} - c_{2}^{2} & 0 & 0 & 0 & 2 c_{1} c_{2} \\
0 & 0 & 0 & -1 & 0 & 0 & 0 \\
2 a_{1} a_{2} & 0 & 0 & 0 & -a_{1}^{2} + a_{2}^{2} & 0 & 0 \\
0 & 2 b_{1} b_{2} & 0 & 0 & 0 & -b_{1}^{2} + b_{2}^{2} & 0 \\
0 & 0 & 2 c_{1} c_{2} & 0 & 0 & 0 & -c_{1}^{2} + c_{2}^{2}
\end{smallmatrix}\right),\\
  \phi(R)  &= \left(\begin{smallmatrix}
1 & 0 & 0 & 0 & 0 & 0 & 0 \\
0 & 1 & 0 & 0 & 0 & 0 & 0 \\
0 & 0 & 1 & 0 & 0 & 0 & 0 \\
0 & 0 & 0 & -\frac{1}{2}& -\frac{3}{2}( 1 -2y_1) & -3 x_1  & 3 w_1 \\
0 & 0 & 0 & \frac{1}{2}(1 - 2y_1) & -\frac{1}{2} & w_1 & 3x_1\\
0 & 0 & 0 & 3x_1 & -3w_1 & -\frac{1}{2} & -\frac{3}{2}( 1 -2y_1) \\
0 & 0 & 0 & -w_1 & -3 x_1 & \frac{1}{2}( 1 - 2y_1) & -\frac{1}{2}
\end{smallmatrix}\right).
\end{align*}
where
\begin{enumerate}
\item $a_1^2+a_2^2=b_1^2+b_2^2=c_1^2+c_2^2 = 1$,
\item $w_1^2+3x_1^2+y_1^2=y_1$, and
 \item $\stwomat {a_1}{-a_2}{a_2}{a_1}\stwomat {b_1}{-b_2}{b_2}{b_1}\stwomat {c_1}{-c_2}{c_2}{c_1} = I$.
 \end{enumerate}
 This map $\widetilde X_0 \to Z$ is generically finite of degree $48$. The Galois group of this cover is generated by the permutations of the pairs $(a_1,a_2)$, $(b_1,b_2)$, $(c_1,c_2)$ in addition to sign changes of the pairs that respect equation (3).

 If we restrict to specializations where $a_1b_1c_1 \neq 0$ and set
   \begin{align*}
  t(\phi) &=-3\frac{(a_2b_1c_1)^2+(a_1b_2c_1)^2+(a_1b_1c_2)^2}{(a_1b_1c_1)^2},\\
  u(\phi) &= -3\frac{a_2b_2c_1+a_2b_1c_2+a_1b_2c_2}{a_1b_1c_1},
   \end{align*}
   and let $\phi_1$ and $\phi_2$ be two specializations of this family, then the corresponding representations are equivalent if and only if we have $t(\phi_1) = t(\phi_2)$, $u(\phi_1)=  u(\phi_2)$, and further the $y_1$ coordinates of the specializations are equal, whereas the $x_1$ and $w_1$ coordinates of the two specializations are equal up to sign.
\end{thm}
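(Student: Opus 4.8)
The plan is to split the statement into an existence half (that these matrices really define $G_2$-representations cutting out a four-dimensional family) and an enumerative half (the degree, the Galois group, and the equivalence criterion), the latter being entirely a question about the fibers of the parametrization.

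\emph{Existence and the $G_2$ structure.} First I would fix a concrete model $G_2\subset\SO_7$ as the stabilizer of a generic alternating $3$-form (equivalently, the cross product on the imaginary split octonions), choosing coordinates adapted to the $3+1+3$ grading $V\oplus\langle e_4\rangle\oplus V^*$ that is visible in the block shapes: in such coordinates $\phi(R)$ fixes the summand $\mathbf 3$ and $\phi(S)$ interchanges the pairs $(1,5),(2,6),(3,7)$ while negating $e_4$. With the form written out, membership $\phi(S),\phi(R)\in G_2$ becomes a finite check using hypotheses (1)--(3): each $2\times 2$ block of $\phi(S)$ is the reflection $\left(\begin{smallmatrix}\cos2\theta&\sin2\theta\\ \sin2\theta&-\cos2\theta\end{smallmatrix}\right)$ once $a_1^2+a_2^2=1$, so $\phi(S)^2=I$ is immediate, and the $4\times 4$ block $M$ of $\phi(R)$ satisfies $M^2+M+I=0$ exactly when $w_1^2+3x_1^2+y_1^2=y_1$, giving $\phi(R)^3=I$; relation (3), which says the three planar rotation angles sum to zero, is the additional condition beyond orthogonality placing $\phi(S)$ in $G_2$.

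\emph{Dimension, dominance, and the degree/Galois count.} Hypotheses (1) and (3) cut the three circles down to a $2$-torus $T$ (three unit complex numbers with product $1$) and (2) is a smooth quadric surface, so the source is $4$-dimensional; I would check that a generic member is irreducible and non-rigid (e.g. that the conjugacy classes of $\phi(S)$, $\phi(R)$, and $\phi(SR)$ do not determine it), placing the image in the four-dimensional component $X$, and take $Z$ to be the dense open locus where $a_1b_1c_1\neq0$, the representation is irreducible, and the map is smooth. For the degree I would compute the fiber directly: the deck transformations are exactly the coordinate changes yielding a simultaneously conjugate pair, realized by conjugation in $G_2$, namely (i) the $S_3$ permuting the three coordinate pairs and (ii) the sign changes $(a_1,a_2)\mapsto(\pm a_1,\pm a_2)$ preserving (3). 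Writing each pair as a unit complex number, a component sign change is either a negation $a\mapsto\pm a$ or a conjugation $a\mapsto\pm\bar a$, and preserving $abc=1$ for generic $a,b,c$ forces either no conjugations or all three, together with an even number of negations, so group (ii) has order $4+4=8$ and $6\cdot 8=48$; one then verifies this group acts faithfully on $T$ and fixes $(w_1,x_1,y_1)$.

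\emph{The equivalence criterion.} The functions $t,u$ are manifestly invariant under the order-$48$ group, and I would identify them with the two independent trace invariants that pin down the conjugacy class of the third monodromy $\phi(SR)$ in the $7$-dimensional representation (the varying local monodromy from the introduction). The criterion then factors as follows: matching $(t,u)$ is equivalent to $(a,b,c)$ lying in a common $48$-orbit on $T$, provided $t,u$ generate the field of invariant rational functions on $T$ --- equivalently that the quotient map $T\to\mathbf A^2$, $(a,b,c)\mapsto(t,u)$, has degree exactly $48$; and matching $(w_1,x_1,y_1)$ up to the residual symmetry records conjugation by the centralizer of $\phi(S)$, which I expect to act on the quadric by the $y_1$-fixing sign change $(x_1,w_1)\mapsto-(x_1,w_1)$. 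This last identification is precisely the extra factor of $2$ relating the degree $48$ here to the degree $96$ cover over the moduli space, and assembling the two halves gives the stated ``if and only if.''

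\emph{Main obstacle.} The $G_2$-membership verification is routine but laborious; the genuinely hard content lies in the two completeness assertions behind the equivalence criterion. The first is that $t,u$ separate the generic Galois orbit on $T$, i.e. that the degree of $T\to\mathbf A^2$ is exactly $48$ and not a proper divisor, which calls for an invariant-theoretic or elimination argument for $S_3\ltimes(\ZZ/2)^3$ acting on $T$. The second, and the one I expect to absorb most of the effort, is the sufficiency direction: since the representations are non-rigid, matching the three local monodromy classes does not by itself force conjugacy, so one must use the extra coordinates $(w_1,x_1,y_1)$ and actually construct a conjugating element of $G_2$ once all listed invariants agree.
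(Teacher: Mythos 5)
Your outline correctly reproduces the combinatorics of the symmetry group --- the $S_3$ on the three pairs together with the order-$8$ group of even pair-negations and simultaneous conjugation, for a total of $48$, exactly as in Section \ref{proofmain1} --- but the two assertions you defer to your ``main obstacle'' paragraph are not finishing touches: they are the entire content of the theorem, and your plan supplies no mechanism for either, while the paper's actual route is quite different from the one you anticipate. The paper never verifies $G_2$-membership by a $3$-form computation (the family is inside $G_2$ by construction, being conjugates of the standard representatives $\alpha_2,\alpha_3$ of Section \ref{ss:conjclasses} by explicit torus and $\HH_1$-elements), and, more importantly, it never ``constructs a conjugating element once all listed invariants agree.'' Instead, the fibers of the parametrization are computed upstream in the double-coset model $X=\GL_2^{\short}\backslash G_2/\SO_{\HH}$: the slice through diagonal $Q_0$ (Lemma \ref{lem:modSO4}), the explicit transporter groups $H_m$ and $G_{(1,h)m}$ of order $48$ (culminating in Lemma \ref{lem:Ghm}), the elimination-theoretic computation that $(a,b,c)\mapsto(t,u)$ on the surface $abc=a+b+c$ has degree exactly $12$ (Lemma \ref{lem:genericfinitepart}) --- which, after the degree-$4$ lift to your torus $T$, is precisely your unproved ``degree exactly $48$'' separation claim --- and the $\delta_7,\delta_8,\delta_{11}$ analysis in Theorem \ref{thm:genericfiniteall}; Theorem \ref{t:main1} then follows by the reparameterization $(w_1,x_1,y_1)=(wy,xy,y^2)$ and a comparison of degrees over $Y$. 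Your statement that the deck transformations are ``exactly'' your groups (i) and (ii) is what Lemmas \ref{lem:modSO4} through \ref{lem:Ghm} exist to prove; checking that the order-$48$ group acts faithfully on $T$ does nothing to rule out additional identifications that mix the torus and quadric coordinates, which is where all the work lies.

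There is also a concrete error in your account of the residual symmetry on the quadric coordinates. You posit a single simultaneous flip $(w_1,x_1)\mapsto(-w_1,-x_1)$ with $y_1$ fixed, attributed to the centralizer of $\phi(S)$, and use it to reconcile the degree $48$ with the degree-$96$ cover of the moduli space. This conflicts with the statement and with the paper's proof on two counts. First, $w_1$ and $x_1$ are each recoverable only up to \emph{independent} signs: in the proof of Theorem \ref{t:main1} the open set $Z$ is identified with $\{(t,u,w_2,x_2,y_1)\mid w_2+x_2+y_1^2=y_1\}$ with $w_2=w_1^2$ and $x_2=3x_1^2$ as separate coordinates, and Remark \ref{rmk:generih} exhibits the inner identifications of all of $\pm(w\pm xi\pm yj)m$ at a \emph{fixed} torus point $m$, realizing all four sign patterns on $(w_1,x_1)$; these arise from transporter elements with $g\in H''$ in Lemma \ref{lem:Ghm}, not from a naive centralizer of $\phi(S)$. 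Second, the simultaneous flip you describe, once accompanied by $y_1\mapsto 1-y_1$, is by Remark \ref{rmk:degree2discrepancy} the covering involution of $Z\to Y$ and corresponds to precomposition with the outer automorphism of $\Gamma$ --- it is not an equivalence of representations, so it cannot supply the factor of $2$ you invoke. Any correct execution of your fiber-counting strategy must therefore redo this bookkeeping, and the sufficiency direction of the equivalence criterion will still require the transporter computations (or an equivalent argument), not the direct construction of conjugating elements you propose.
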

Theorem \ref{t:main1} is a consequence of Theorem \ref{thm:genericfiniteall} and a slight reparameterization discussed in Section \ref{s:representation}. See the proof of Theorem \ref{t:main1} in Section \ref{proofmain1}. Note that the map $\widetilde{X}_0 \to Z$ is in particular finite of degree $48$ over the locus of points discussed in Theorem \ref{t:main2} below. The invariants $t(\phi)$ and $u(\phi)$ appearing in Theorem \ref{t:main1} above are defined in Section \ref{s:computational} below, and we have retained the factors of $-3$ to be consistent with that later material.

Before outlining the layout of the paper, we introduce some notation that will both allow us to connect this work to other results in the literature, and which will allow us to list algebraic conditions that will ensure our representation is surjective. If $\phi$ is a representation as in Theorem \ref{t:main1}, let the characteristic polynomial of $\phi(T)$, where $T = \stwomat 1101$ as usual, be
    \[
   X^7+g_1X^6+g_2X^5+g_3X^4-g_3X^3-g_2X^2-g_1X-1.
 \]
 Then $g_3=g_1+g_2-g_1^2$ since $\phi(T)$ is an element of $G_2$. Furthermore, $g_1$ has a simple expression $g_1=3(a_2^2+b_2^2+c_2^2)-5$, while $g_2$ is more complicated to write down. Therefore we will not do so, though a reader can easily compute it using a computer algebra system. Note that this notation for $g_1$ and $g_2$ will only appear in parts (5) and (6) of Theorem \ref{t:main2}.

 Theorem \ref{t:main1} relates to the work of Boalch-Paluba \cite{Boalch1} as follows: regarding $\phi(T)$ as the monodromy at infinity of the local system corresponding to $\phi$, then fixing the conjugacy class of $\phi(T)$ corresponds to fixing the monodromy at infinity, similarly to what is done in \cite{Boalch1}. In particular, since the characteristic polynomial of $\phi(T)$ is invariant under conjugation, this corresponds to fixing $(g_1,g_2)$. In this way, our family $\widetilde{X}_0$ of Theorem \ref{t:main1} is fibered over the pairs $(g_1,g_2)$ above, and this realizes $\widetilde{X}_0$ as a two-parameter family of complex surfaces, similar to the situation in \cite{Boalch1}. In a personal communication, Philip Boalch has suggested to us that this might be a family of H3 surfaces as discussed in his classification of minimally nonrigid character varieties of complex dimension two \cite{Boalch2}. More precisely, he has suggested that the fibers of $\widetilde{X}_0$ over a pair $(g_1,g_2)$ might be affine-$E_6$ examples discussed in Section 4.1 of \cite{Boalch2}, but we have not verified whether this interesting suggestion holds.

Turning from the rich global geometry of the family $\widetilde{X}_0$ introduced in Theorem \ref{t:main1}, we are also interested in the more arithmetic or algebraic conditions on when the representations in our family surject onto $G_2(\FF_p)$. Our next main theorem gives algebraic conditions that classify precisely this: 
\begin{thm}
  \label{t:main2}
  Let $\widetilde X_0^{\surj}$ denote the subset of $\widetilde X_0$ which is the open complement to the following list of conditions:
  \begin{enumerate}
  \item $a_1a_2b_1b_2c_1c_2 =0$;
  \item $(a_1^2-a_2^2)(b_1^2-b_2^2)(c_1^2-c_2^2)=0$;
  \item $wxyz = 0$;
  \item $(x^2-z^2)(w^2-y^2)(3x^2-y^2)(3z^2-w^2)=0$;
  \item $g_1^5-2g_1^3g_2-g_1^3-g_1^2g_2+2g_1g_2^2+g_2^3=0$;
  \item $g_1$ equals one of the following possibilities: if $\alpha$ denotes a root of $\alpha^3-3\alpha+1 = 0$,
    \[
   0,~\pm 1,\pm 2,~-3,~-7,~ \alpha,~ (1/2)(\pm\sqrt{13}-1).
 \]
\item in characteristic $11$, we must also avoid the following values of $(g_1,g_2)$:
  \begin{align*}
    &(1,8),~ (10,9),~ (8,1),~ (4,1),~ (1,10),~ (0,0),~ (6,9),\\
    & (2,6),~ (4,10),~ (6,3),~ (3,2),~(7,10),~ (9,5),~ (4,7).
  \end{align*}
\end{enumerate}
Then all representations $\phi \in \widetilde X_0^{\surj}$ with image in $G_2(\FF_p)$ surject onto $G_2(\FF_p)$, where $p\geq 5$.
\end{thm}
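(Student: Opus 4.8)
\emph{Overview.} The plan is to show that the image $H=\langle\phi(S),\phi(R)\rangle$ equals $G_2(\mathbf{F}_p)$ by proving that $H$ lies in no maximal subgroup, and that each block of conditions in the statement removes one class of maximal subgroups. The backbone is the classification of maximal subgroups of $G_2(\mathbf{F}_p)$ for odd primes (due to Kleidman), which for prime $p\geq 5$ consists, up to conjugacy, of: the two maximal parabolics; the subsystem subgroups $(\SL_2(\mathbf{F}_p)\circ\SL_2(\mathbf{F}_p)).2$, $\SL_3(\mathbf{F}_p){:}2$ and $\mathrm{SU}_3(\mathbf{F}_p){:}2$; the principal $\PGL_2(\mathbf{F}_p)$; and a short list of finite almost simple or soluble subgroups, namely $2^3\!\cdot\!L_3(2)$, $L_2(8)$, $L_2(13)$ and $U_3(3){:}2\cong G_2(2)$, together with $J_1$ only in the case $p=11$. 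Because $p$ is prime there are no subfield subgroups $G_2(\mathbf{F}_{p_0})$ to rule out, which is exactly why the hypothesis is stated for prime $p$. Since $\phi(S),\phi(R)$ have orders $2$ and $3$ and $\phi(T)=\phi(S)\phi(R)$ has characteristic polynomial controlled by $(g_1,g_2)$, the element $\phi(T)$ is the natural invariant to track through the list.

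\emph{Reducible maximals (conditions (1)--(4)).} I would dispose of the parabolics and of $(\SL_2\circ\SL_2).2$, $\SL_3{:}2$, $\mathrm{SU}_3{:}2$ in one stroke, since each acts reducibly on the seven-dimensional module over $\overline{\mathbf{F}}_p$ (composition factors of dimensions $1{+}3{+}3$ for the $A_2$ and ${}^2A_2$ types, $4{+}3$ for $A_1\tilde A_1$, and a flag for the parabolics). Thus it suffices to establish \emph{absolute} irreducibility of $\phi$; absolute, rather than merely $\mathbf{F}_p$-rational, irreducibility is essential here in order to exclude $\mathrm{SU}_3{:}2$. Working directly with the explicit matrices, organized by the visible $3{+}1{+}3$ block structure, I would show that any proper $H$-invariant subspace over $\overline{\mathbf{F}}_p$ forces one of the closed conditions (1)--(4): degeneracies (1),(2) come from the eigendata of $\phi(S)$ attached to the three conic points $(a_1,a_2),(b_1,b_2),(c_1,c_2)$, while (3),(4) come from the lower $\phi(R)$-block in the reparameterized coordinates $w,x,y,z$ of Section~\ref{s:representation}. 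Off this locus the representation is absolutely irreducible and no reducible maximal can contain $H$.

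\emph{The principal $\PGL_2$ and the finite maximals (conditions (5)--(7)).} The principal $\PGL_2(\mathbf{F}_p)$ acts irreducibly through $\Sym^6$ of the standard module, so it survives the irreducibility step and must be excluded separately. If $\phi(T)$ lay in a principal $\SL_2$ with semisimple parameter $s=\mu^2+\mu^{-2}$, the eigenvalues $1,\mu^{\pm2},\mu^{\pm4},\mu^{\pm6}$ pin both invariants as $g_1=-(s^3+s^2-2s-1)$ and $g_2=s^5+s^4-3s^3-2s^2+2s+1$, and eliminating $s$ gives a single plane curve in the $(g_1,g_2)$-plane; I expect this curve to be precisely condition (5) (one checks, for instance, that the two sides of (5) agree to top order under this substitution). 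For the finite maximals $2^3\!\cdot\!L_3(2)$, $L_2(8)$, $L_2(13)$ and $U_3(3){:}2$, the relevant datum is the finite set of values of $\operatorname{tr}\phi(T)=-g_1$ taken on seven-dimensional Brauer characters by elements expressible as a product of an order-two and an order-three element; these are the entries $0,\pm1,\pm2,-3,-7,\alpha,\tfrac12(\pm\sqrt{13}-1)$ of condition (6), with $\alpha$ (a root of $\alpha^3-3\alpha+1$, i.e.\ $2\cos(2\pi/9)$) coming from order-$9$ elements of $L_2(8)$, the $\sqrt{13}$ data from $L_2(13)$, the value $-7$ from $2^3\!\cdot\!L_3(2)$, and the small values from $U_3(3){:}2$ and from low-order elements. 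Finally $J_1$ occurs only inside $G_2(\mathbf{F}_{11})$ and is not separated by $g_1$ alone, so it must be handled by the full pair: enumerating conjugacy classes of $(2,3)$-generating pairs in $J_1$ and computing $(g_1,g_2)$ in the seven-dimensional representation over $\mathbf{F}_{11}$ yields the fourteen forbidden pairs of condition (7).

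\emph{Main obstacle.} I expect the genuine difficulty to be this last block, and in particular the interaction of the modular characteristic with the character data: the seven-dimensional Brauer characters of $L_2(8)$, $L_2(13)$ and $U_3(3)$ degenerate or fuse for small $p$, so the trace list in (6) must be verified characteristic-by-characteristic (this is where $p\geq5$ is really used), and the $J_1$ computation in characteristic $11$ appears to require an explicit machine enumeration. By contrast, the irreducibility step (conditions (1)--(4)) and the $\PGL_2$ elimination (condition (5)) should reduce to direct linear algebra and a single resultant computation with the given matrices. Assembling these exclusions, any $\phi\in\widetilde X_0^{\surj}$ with image in $G_2(\mathbf{F}_p)$ lies in no maximal subgroup, whence $H=G_2(\mathbf{F}_p)$.
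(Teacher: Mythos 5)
Your proposal is correct and follows essentially the same route as the paper's proof: invoke the Cooperstein--Kleidman classification of maximal subgroups of $G_2(\mathbf{F}_p)$ (with primality of $p$ disposing of subfield subgroups), match conditions (1)--(4) to the reducible maximals ($A_2$, $D_2$, and the two parabolics) via invariant subspaces of dimension at most $3$, condition (5) to the principal $\PGL_2$ via the characteristic polynomial $(X-1)\prod_{i=1}^{3}(X-\lambda^{i})(X-\lambda^{-i})$ and elimination of $\lambda$, and conditions (6)--(7) to the bounded-order maximals and to $J_1$ at $p=11$ via the characteristic-polynomial data tabulated in Appendix \ref{appendix1}. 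The only deviations are cosmetic: the paper derives the exclusions (1)--(4) class by class (a common eigenvector for the $A_2$ subgroup, a centralizer argument and explicit ideals for the $D_2$ subgroup, isotropic-subspace computations for the parabolics) rather than through a single absolute-irreducibility claim, and minor attributions in your sketch (e.g.\ the source of the value $-7$ in condition (6), and the remark that \emph{absolute} irreducibility is needed for the unitary $A_2$ case, where the rational trivial summand already suffices) do not affect the argument.
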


In the preceding Theorem we must avoid $p=2$ and $3$ for a variety of reasons. The conditions of Theorem \ref{t:main2} can be significantly refined by substituting the more precise conditions that can be found throughout Section \ref{s:maximal}. For simplicity we have given a simpler but less comprehensive statement in the Theorem \ref{t:main2}. Presumably this surjectivity could be lifted to a density statement for represenations landing in $G_2(\ZZ_p)$ where $\ZZ_p$ denotes the $p$-adic integers using a Hensel lifting argument as in Section 2.2 of \cite{bff}, but we do not pursue this here.

The paper is organized as follows. In Section \ref{s:background} we introduce background on the octonions and $G_2$. In Section \ref{s:moduli} we begin the study of the geometry of $X$ by first analyzing the intermediate quotient $G_2/\SO_{\HH}$. In Section \ref{p: LargePatch} we begin by analyzing the invariant theory for the $\GL_2^{\short}$-action on an open subset of $G_2/\SO_{\HH}$. Ultimately in Section \ref{p: LargePatch} we construct and establish the stated properties of the varieties underlying Theorem \ref{t:main1}. Section \ref{s:representation} introduces our family of representations, which is essentially a versal family living over the moduli space. Finally, in Section \ref{s:maximal} we establish the surjectivity claims of Theorem \ref{t:main2}.

If $\phi \colon \Gamma \to G_2(\QQ)$ is a representation as above whose reduction $\phi_p$ mod $p$ surjects onto $G_2(\FF_p)$, then the kernel $\ker \phi_p$ is noncongruence. In this paper we say nothing about the corresponding noncongruence modular forms. Nor do we say anything about the local systems on $\PP^1\setminus\{0,1,\infty\}$ corresponding to $\phi$ beyond the details of the local monodromy. Other open problems related to this work include: extending our family to other patches in the moduli space as outlined in Section \ref{ss:othercells}; identifying the fibers of $\widetilde{X}_0$ over a fixed conjugacy class of $\phi(T)$ within the classification outlined by Boalch \cite{Boalch2}; determining whether the global space $\widetilde{X}_0$ can be related to some well-known family of varieties such as the Fricke family arising in \cite{Boalch1}. We hope to return to some of these questions in future work.

We thank Philip Boalch for comments on an earlier draft of this manuscript.
\subsection{Notation}
\begin{itemize}
\item[---] $p$ is a prime with $p\geq 5$.
\item[---] $K$ denotes a field.
\item[---] $\Gamma = \PSL_2(\ZZ)$.
\item[---] $\HH$ is a rational quaternion algebra such that $i^2=-3$, $j^2=-1$ and $\HH_1$ is the subgroup of norm $1$ elements.
\item[---] $\OO$ is an octonion algebra constructed from $\HH$ via the Cayley-Dickson construction. The trace zero subspace of $\OO$ is denoted $\OO_0$.
\item[---] $G_2$ is the group of automorphisms of $\OO$ acting on $\OO_0$.
\item[---] $\SO_\HH\subseteq G_2$ is a maximal subgroup isomorphic with $\SO_4$ that is described in Section \ref{sss:d2}.
\item[---] $\GL_2^{\llong}$ and $\GL_2^{\short}$ are $\GL_2$-subgroups of $G_2$ that are described in Section \ref{ssec:gl2}.
\item[---] $\SU_M\subseteq G_2$ is an $A_2$-subgroup described in Section \ref{ssec:a2}.
\item[---] $D_6$ is the Weyl group of $G_2$, described in Section \ref{ss:weyl}.
\item[---] $\alpha_3,\alpha_3' \in G_2$ are representatives for the conjugacy classes of order three elements in $G_2$, which are described in Section \ref{ss:conjclasses}.
\item[---] $X$ is the large  connected component of the moduli space of homomorphisms $\Gamma \to G_2(K)$, described in Section \ref{ss:connected}.
\end{itemize}

\section{Background on $G_2$}
\label{s:background}
The model we take for the group $G_2$ is as the automorphisms of an octonion algebra. There is only one form of the octonions, or the group $G_2$, over any algebraically closed field, finite field, or local field other than $\RR$. Consequently, for most considerations the form of the octonions is not so important. Since we will ultimately be interested in elements of $G_2$ of order three, below we use a nonstandard presentation of the octonions so that these elements have simpler integral presentations. For information on octonions and $G_2$ one can consult Chapter VIII, Section \textbf{33.C}, of \cite{BOI}, where octonion algebras are referred to as Cayley algebras.

\subsection{Models for the octonions and $G_2$} The following model for $\OO$ over $\QQ$ reduces mod $p$ to an octonion algebra for primes $p\neq 2,3$. Define $\QQ$-linear generators $i$, $j$ and $k$ such that a $\QQ$-basis for $\OO$ is
\[
  \cB =(1,i,j,ij,k,ik,jk,(ij)k).
\]
Set $i^2=-3$, $j^2=k^2=-1$. The remaining relations are the standard octonion relations. More precisely, the first set of relations state that any two distinct nonidentity elements $x,y \in \cB$ anticommute: $xy=-yx$. The last set of relations are that any three nonidentity basis vectors $x,y,z\in \cB$ that do not lie inside a quaternion algebra satisfy $(xy)z= -x(yz)$.

Given
\[
  x = a_1+a_2i+a_3j+a_4ij+a_5k+a_6ik+a_7jk+a_8(ij)k
\]
with $a_j\in\QQ$ for all $j$, we define the \emph{conjugate}
\[
\bar x = a_1-a_2i-a_3j-a_4ij-a_5k-a_6ik-a_7jk-a_8(ij)k.
\]
This satisfies $\overline{xy} = \bar{y}\bar{x}$. The \emph{trace} is $\Tr(x) = x+\bar x = 2a_1$, and the norm is
\[
  N(x) = x\bar x = (a_1^2+a_3^2+a_5^2+a_7^2)+3(a_2^2+a_4^2+a_6^2+a_8^2).
\]
This is a nondegenerate positive definite quadratic form on $\OO/\QQ$. The trace-zero subspace $\OO_0$ of $\OO$ is also sometimes called the (purely) imaginary octonions. The group $G_2$ is realized below with respect to its action on the trace-zero octonions $\OO_0$.

There is an alternative description of the octonions that we will make use of below called the \emph{Cayley-Dickson construction}. To describe this, let $\HH$ denote a rational quaternion algebra in the usual basis $1,i,j,ij$ such that $i^2=-3$ and $j^2=-1$ as above. The Cayley-Dickson construction arises by writing
\[
  \OO = \HH \oplus \HH
\]
where the multiplication and conjugation are described as:
\begin{align*}
  (u_1,v_1)(u_2,v_2) &= (u_1u_2- \overline{v_2}v_1,  v_2u_1 + v_1\overline{u_2} ),\\
  \overline{(u_1,v_1)} &= (\overline{u_1},-v_1).
\end{align*}
Identifying $\HH \subseteq \OO$ with the first coordinate, and using the standard basis $1,i,j,ij$ of $\HH$, we set $k=(0,1)$ so that $ik=(0,i)$, $jk=(0,j)$ and $(ij)k = -i(jk) = (0,ij)$.

Since $\OO_0$ is a $7$-dimensional rational vector space, our explicit presentation of $G_2$ will be as $7\times 7$-matrices arising via the ordered basis obtained from $\cB$.

\begin{rmk}\label{rem:altsub}
  The algebra $\OO$ has two other useful decompositions of the same form $\HH \oplus \HH$ where multiplication is defined by the Cayley-Dickson construction.
\begin{align*} \OO &= {\rm Span}(1,i,k,ik) \oplus  {\rm Span}(j,ij,kj,(ik)j), \\ \OO &= {\rm Span}(1,i,jk,i(jk)) \oplus  {\rm Span}(k,ik,(jk)k,(i(jk))k). \end{align*}
\end{rmk}

\subsection{Some relevant subgroups of $G_2$}\label{ss:relsub}
To describe the moduli space of representations from the modular group into $G_2$ we will make use of various subgroups of $G_2$. For ease of reference we collect the facts we need in this subsection.

\subsubsection{A $D_2 = A_1\times A_1$-subgroup}
\label{sss:d2}
There is an action of $\SO_4$ on $\OO$ via automorphisms, realizing $\SO_4$ as a subgroup of $G_2$. To describe this group concretely, let $\HH_1$ denote the subgroup of quaternions of norm one. Then there is an isomorphism
\[
  \SO_4 \cong \Spin_3\times \Spin_3/\{\pm 1\}\cong \HH_1\times \HH_1/\{\pm 1\}. 
\]
This leads to a left action of $\SO_4$ on $\OO$ via the Cayley-Dickson construction. More precisely, given $(h_1,h_2) \in \HH_1^2$ and $(u,v) \in \OO = \HH\oplus \HH$, we have
\[
  (h_1,h_2) \cdot (u,v) \df (h_1u\overline{h_1},h_2v\overline{h_1}).
\]
Note that the factor of $\overline{h_1}$ in the second coordinate above is \emph{not} a typo. This action of $\SO_4$ on the trace-zero octonions decomposes into a sum of two irreducible representations, whereby $i$, $j$ and $ij$ span one of the spin representations, and $k$, $ik$, $jk$, $(ij)k$ span the standard representation of $\SO_4$. We shall denote this specific subgroup of $G_2$ by $\SO_\HH$. It is a maximal subgroup of $G_2$, and all $D_2$ subgroups of $G_2$ are conjugate, over an algebraically closed field, to this standard one. This $D_2$ subgroup has an order $2$ centre given by $(-1,1)$ and it is the centralizer in $G_2$ of this central element. Indeed, every element of order $2$ in $G_2$ has centralizer of type $D_2$.

The action of $\HH_1$ on $\HH$ given by $h_1\circ v= v\overline{h_1}$ is described, in the basis $k$, $ik$, $jk$, $(ij)k$, by
\[  \{  a+bi+cj+dij \;|\; a^2+3b^2+c^2+3d^2=1 \} \rightarrow 
\left(\begin{smallmatrix} 
a &3b &c &3d \\
-b & a &-d& c\\
-c& 3d &a &-3b\\
-d& -c&  b&a 
\end{smallmatrix}\right). \]
The action of $\HH_1$ on $\HH$ given by $h_2\circ v= h_2v$ is described, in the basis $k$, $ik$, $jk$, $(ij)k$, by
\[  \{  e+fi+gj+hij \;|\; e^2+3f^2+g^2+3h^2=1 \} \rightarrow 
\left(\begin{smallmatrix}
e &-3f & -g&-3h \\
f &e & -h&  g\\
g &3h &e & -3f\\
h & -g &f &e 
\end{smallmatrix}\right). \]
The action of $\HH_1$ on $\HH_0$ given by $h_1\circ u = h_1u\overline{h_1}$ is described, in the basis $i$, $j$, $ij$, by 
\[  \{  a+bi+cj+dij \;|\; a^2+3b^2+c^2+3d^2=1 \} \rightarrow  
\left(\begin{smallmatrix} 
a^2+3b^2-c^2-3d^2 &-2ad+2bc& 2ac+6bd \\
6ad+6bc &a^2-3b^2+c^2-3d^2& -6ab+6cd \\
-2ac+6bd & 2ab+2cd &a^2-3b^2-c^2+3d^2 
\end{smallmatrix}\right) .\]

\begin{rmk}\label{rmk:altD2}
With the maximal torus as defined  \ref{ssec:mt} below, the subgroup $\HH_1\times \{1\}$ gives two complementary short roots of $G_2$, whereas the subgroup $\{1\} \times \HH_1$ gives two complementary long roots. These roots do not generate the root system of $G_2$ because they are perpendicular.

We can similarly construct conjugates of $\HH_1\times\HH_1$ using the two decompositions from Remark \ref{rem:altsub}. Each gives two complementary short roots and two complementary long roots. Taken together, these roots along with the maximal torus span the Lie algebra of $G_2$.
\end{rmk}

\subsubsection{Maximal torus}
\label{ssec:mt} We can use the action discussed in Section \ref{sss:d2} to describe a maximal torus $\U_1\times \U_1/ \{\pm 1\}$ in $G_2$ by letting the norm one elements
\[\U_1 \df \{a+bi \in \HH \mid a^2+3b^2=1\}\]
act on the imaginary octonions $\OO_0$. Concretely, tracing through our various choices, one sees that an element $(a+bi,c+di) \in \U_1\times \U_1$ acts as
\[
  \left(\begin{smallmatrix}
      1&&&&&&\\
      &a^2-3b^2&-6ab&&&&\\
      &2ab&a^2-3b^2&&&&\\
      &&&ac+3bd&3bc-3ad&&\\
      &&&ad-bc&ac+3bd&&\\
      &&&&&ac-3bd&-3ad-3bc\\
      &&&&&ad+bc&ac-3bd
\end{smallmatrix}\right).
\]
This gives a maximal torus in $G_2$.

\subsubsection{$\GL_2$-subgroups}
\label{ssec:gl2}

We can use the action discussed in Section \ref{sss:d2} to identify two $\GL_2$-subgroups of $G_2$.
Both of these are Levi subgroups of maximal parabolic subgroups of $G_2$ containing the maximal torus of Section \ref{ssec:mt}.

\begin{rmk}
These subgroups are isomorphic to $\GL_2$ over an algebraically closed field.
\end{rmk}

\paragraph{Short root $\GL_2$-subgroup}\label{p:shortroot}

The subgroup $\HH_1\times \U_1/ \{\pm 1\}$  gives what we will call the short root $\GL_2$ subgroup.

The centre of this $\GL_2$ subgroup is the image of $1 \times \U_1$, and this subgroup is the centralizer in $G_2$ of its centre. Indeed, this subgroup is the centralizer of any element from the image of $1\times \U_1$ of order at least $3$ (the centralizer of an order $2$ element will be of type $D_2$). We shall denote this specific subgroup of $G_2$ by $\GL_2^{\short}$.

%\andrew{I don't think we will ever use the following observation, consider deleting}
%Using that $\sqrt{-3}j-ij$ is a zero divisor, one sees that the nilpotent subgroups for $\HH_1$ with respect to the standard maximal torus $\U_1$ are given by
%\begin{align*}
%  N_l &= \{  1 + n(\sqrt{-3}j-ij) \;|\; n\in K \},& N_u &= \{  1 + n(\sqrt{-3}j+ij) \;|\; n\in K  \}.
%\end{align*}
%The images of $N_l$ and $N_u$ in $\SO_7$ are then:
%\[
%  \left( \begin{smallmatrix} 
%1 & 2n & 2\sqrt{-3}n \\
%-6n & 1-6n^2 & -6\sqrt{-3}n^2 \\
%-2\sqrt{-3}n & -2\sqrt{-3}n^2 & 1 + 6n^2\\
%&&&  1 & 0 &\sqrt{-3}n & -3n \\
%&&&  0 & 1 & n & \sqrt{-3}n \\
%&&& -\sqrt{-3}n & -3n & 1 & 0 \\
%&&& n & -\sqrt{-3}n & 0 & 1
%\end{smallmatrix}\right) \]
%and
%\[
%     \left( \begin{smallmatrix} 
%1 & -2n & 2\sqrt{-3}n \\
%6n & 1-6n^2 & 6\sqrt{-3}n^2 \\
%-2\sqrt{-3}n & 2\sqrt{-3}n^2 & 1+6n^2 \\
%&&&  1 & 0 &\sqrt{-3}n & 3n \\
%&&&  0 & 1 & -n & \sqrt{-3}n \\
%&&& -\sqrt{-3}n & 3n & 1 & 0 \\
%&&& -n & -\sqrt{-3}n & 0 & 1
% \end{smallmatrix}\right) 
%\]
%respectively.
 
\paragraph{Long Root $\GL_2$-subgroup}\label{p:longroot}

The subgroup $\U_1\times \HH_1/ \{\pm 1\}$ gives what we will call the long root $\GL_2$ subgroup.

The centre of this $\GL_2$ subgroup is the image of $\U_1 \times 1$, and this subgroup is the centralizer in $G_2$ of its centre. Indeed, this subgroup is the centralizer of any element from the image of $\U_1\times 1$ of order at least $4$ (the centralizer of an order $3$ element from the centre will be of type $A_3$, see below, the centralizer of an order $2$ element will be of type $D_2$). We shall denote this specific subgroup of $G_2$ by $\GL_2^{\llong}$.

%\andrew{I don't think we will ever use the following observation, consider deleting}
%As above, in this case the standard nilpotent subgroups are of the form:
%\begin{align*} \left( \begin{smallmatrix} 
%1\\
%&1\\
%&&1\\
%&&&  1 & 0 &-\sqrt{-3}n & 3n \\
%&&&  0 & 1 & n & \sqrt{-3}n \\
%&&& \sqrt{-3}n & -3n & 1 & 0 \\
%&&& -n & -\sqrt{-3}n & 0 & 1
% \end{smallmatrix}\right), && \left( \begin{smallmatrix} 
%1\\
%&1\\
%&&1\\
%&&&  1 & 0 &-\sqrt{-3}n & -3n \\
%&&&  0 & 1 & -n & \sqrt{-3}n \\
%&&& \sqrt{-3}n & 3n & 1 & 0 \\
%&&& n & -\sqrt{-3}n & 0 & 1
% \end{smallmatrix}\right). \end{align*}

\subsubsection{$A_2$-subgroups}
\label{ssec:a2}
There are several ways to identify an $A_2$ subgroup of $G_2$. The first is the subgroup generated by the long roots of $G_2$, that is, the subgroup generated by the long root parts of the three subgroups from Remark \ref{rmk:altD2}.

Alternatively, following  \cite[Section 2]{fiori1}, we see that with $L = {\rm Span}(1,i)$ and $M=L^\perp={\rm Span}(j,ij,k,ik,jk,i(jk))$ we can write
\[ \OO = L \oplus M \]
and recognize $L$ as a quadratic algebra and $M$ as a $3$-dimensional left $L$-module with $L$-basis $j$, $k$ and $jk$. Through polarization, the norm form on $\OO$ induces a Hermitian inner product on $M$ and we can re-interpret the product structure on $\OO$ using the decomposition $\OO = L\oplus M$ as
\[ (\ell_1,m_1)(\ell_2,m_2) = (\ell_1\ell_2 + m_1\cdot m_2, \ell_1\cdot m_2 + \overline{\ell_2}\cdot m_1 + m_1\times m_2), \]
where $m_1\cdot m_2$ is the Hermitian pairing and $m_1\times m_2$ is the associated Hermitian cross product. Indeed, given any Hermitian pairing on a three dimensional module $M$ over a quadratic module $L$ the above defines an octonion algebra. Through functoriality, automorphisms of the Hermitian space $M$ preserving the cross product, that is, elements of $\SU_M$, determine automorphisms of $\OO$, hence they yield elements of $G_2$. We shall denote this precise $A_2$ subgroup of $G_2$ by $\SU_M$.

These automorphisms all take $i$ to $i$, and indeed, this $A_2$ subgroup consists of exactly the automorphisms of $\OO$ taking $i$ to $i$. To see this, notice that the decomposition $\OO=L\oplus M$ is determined by the choice of an element of $\OO_0$ with non-zero norm  and the Hermitian structure is determined by the multiplication on $\OO$. Over an algebraically closed field every $A_2$ subgroup of $G_2$ is conjugate and are determined by a choice of non-isotropic line in $\OO_0$.

This $A_2$-subgroup is not maximal, though its normalizer is. The $A_2$ subgroup has index $2$ in its normalizer which contains the order $2$ element taking $i$ to $-i$ and fixing both $j$ and $k$.
This order $2$ element gives the outer automorphism of the $A_2$ subgroup $\SU_M$.
The centre of this $A_2$ subgroup is $\mu_3$, a group of order $3$, and $\SU_M$ is the centralizer in $G_2$ of its centre. The outer automorphism of $\SU_M$ acts non-trivially on the centre of $\SU_M$, and so it interchanges the two order $3$ elements.

\subsection{The Weyl group}
\label{ss:weyl}

The Weyl group for $G_2$ is the (semi-)direct product of the Weyl group of $\SU_M$, which is isomorphic to $S_3$, with the outer-automorphism of the $\SU_M$-subgroup, which is realized in $G_2$ as the map 
\[ \begin{array}{cccc} i \mapsto -i &j \mapsto j & k \mapsto k & jk \mapsto jk \end{array}\]
This acts on the root system as a rotation by $180^\circ$.
This group has order $12$ and is isomorphic to $D_6$. 
The Weyl group of the $\SU_M$ subgroup comes from permutations of the lines spanned by $j$, $k$, and $jk$, that is, representatives for non-trivial elements are given by maps which act on the generators $i$, $j$, and $k$ as follows:
\[\begin{array}{cccc}
i\mapsto i& j\mapsto j & k\mapsto jk  & jk \mapsto -k\\
i\mapsto i& j\mapsto jk & k\mapsto k & jk \mapsto -j \\
i\mapsto i& j\mapsto k & k\mapsto j & jk \mapsto -jk\\
i\mapsto i& j\mapsto k & k\mapsto jk & jk \mapsto j\\
i\mapsto i& j\mapsto jk & k\mapsto j & jk \mapsto k.
\end{array}\]
Note that these generate a group of order $24$, as it contains the subgroup of the maximal torus of Section \ref{ssec:mt} consisting of its order $2$ elements. 
Note that the outer automorphism of $\SU_M$-given above does indeed commute with these as a $180^\circ$ degree rotation commutes with all other rotations and reflections.

As the Weyl group is generated by reflections, one could alternatively describe elements by finding all of the reflections. 
Because each of the three $\SO_4$ subgroups from Remark \ref{rmk:altD2} identifies a pair of perpendicular elements in the root system, we see that the Weyl group of $G_2$ will be generated by the Weyl groups of these three $\SO_4$ subgroups. The first three of these listed above give the reflections for the long root $\SL_2$ factor of these $\SO_4$ subgroups. The order $3$ elements listed give rotations by $120^{\circ}$.

\subsection{Conjugacy classes of some finite order elements}
\label{ss:conjclasses}
Every finite order element, of order not divisible by the characteristic of the field, is conjugate, over an algebraically closed field, to an element of a fixed maximal torus.
It follows that conjugacy classes of finite order elements can be determined by looking at finite order elements of the maximal torus modulo the action of the Weyl group.

In $G_2$ the maximal torus factors through a simply connected $A_2$ subgroup. As there is a unique conjugacy class of order $2$ element in $\SL_2$, the same is true of $G_2$. With the maximal torus as presented in Section \ref{ssec:mt}, we fix as a representative $\alpha_2$ the image of $(-1,1)=(1,-1)\in \U_1\times\U_1/(-1,-1)$. 
Concretely, this gives
\[ \alpha_2 =   \left(\begin{smallmatrix} 1 \\ &1\\ &&1\\ &&&-1\\ &&&&-1\\ &&&&&-1 \\&&&&&&-1 \end{smallmatrix}\right). \]

In $\SL_3$ there are three conjugacy classes of order $3$ elements, two of which are central. The outer automorphism of $\SL_3$, which is an element of $G_2$ and part of the Weyl group of the maximal torus, interchanges these two central elements of $\SL_3$.
It follows that there are $2$ conjugacy classes of order $3$ elements in $G_2$. One which is central in $\SL_3$ and one which is not.
As discussed in Section \ref{p:shortroot} the image, $\alpha_3$, of $(1,\zeta_3)\in \U_1\times\U_1/(-1,-1)$ does not centralize an $\SL_3$ subgroup whereas the image, $\alpha_3'$, of $ (\zeta_3,1)\in \U_1\times\U_1/(-1,-1)$ does.
Noting that $\zeta_3\in \U_1$ is $\tfrac{1}{2}(-1+i)$, we find that
\begin{align*}
 \alpha_3 &= \left(\begin{smallmatrix} 1 \\ &1\\ &&1\\ &&&-\tfrac{1}{2} & -\tfrac{3}{2}\\ &&&\tfrac{1}{2}& -\tfrac{1}{2} \\ &&&&&-\tfrac{1}{2} & -\tfrac{3}{2} \\&&&&&\tfrac{1}{2}& -\tfrac{1}{2} \end{smallmatrix}\right), &
 \alpha_3' &=  \left(\begin{smallmatrix} 1 \\ &-\tfrac{1}{2} &\tfrac{3}{2}\\ &-\tfrac{1}{2}&-\tfrac{1}{2} \\ &&&-\tfrac{1}{2} & \tfrac{3}{2}\\ &&&-\tfrac{1}{2}& -\tfrac{1}{2} \\ &&&&&-\tfrac{1}{2} & -\tfrac{3}{2} \\&&&&&\tfrac{1}{2}& -\tfrac{1}{2}  \end{smallmatrix}\right). 
 \end{align*}

\section{Geometry of the moduli space}
\label{s:moduli}
Let $G$ be a group. As a representation of $\Gamma$ is determined by the choice of an order $2$ and order $3$ element of $G$ up to simultaneous conjugacy by $G$, the moduli space of representations $\Gamma \to G$ is a disjoint union indexed by conjugacy classes of such elements. Fixing representatives $\alpha_2$ and $\alpha_3$ for these conjugacy classes in $G$, the corresponding conjugacy classes are:
\begin{align*}
 C_{\alpha_2} &= \{ g\alpha_2g^{-1} \;|\; g\in G  \}  \cong     G / Z_{G}(\alpha_2),\\
 C_{\alpha_3} &= \{ g\alpha_3g^{-1} \;|\; g\in G  \}  \cong     G / Z_{G}(\alpha_3).
\end{align*}
With this notation, each component $X$ of the moduli space of representations of $\Gamma$ into $G$ is clearly covered
\[
(G/ Z_G(\alpha_2)) \times (G/ Z_G(\alpha_3)) \to X,
\]
where to a pair $(g_1Z_G(\alpha_2),g_2Z_G(\alpha_3))$ we associate the representation $\rho$ determined by
\begin{align*}
  \rho(S) &= g_1\alpha_2g_1^{-1},\\
  \rho(R) &= g_2\alpha_3g_2^{-1}.
\end{align*}

The map above is equivariant for the natural left action of $G$ on  $(G/ Z_G(\alpha_2)) \times (G/ Z_G(\alpha_3))$ and the conjugation action on representations. Using this, it follows that each element of $X$ has representatives of the form
\[ (  Z_{G}(\alpha_2),  g_2Z_{G}(\alpha_3)) \qquad\text{and}\qquad ( g_1 Z_{G}(\alpha_2),  Z_{G}(\alpha_3)). \]
These representatives are not unique, indeed we have
\begin{align*}
 (  Z_{G}(\alpha_2),  g_2Z_{G}(\alpha_3)) &=  (  Z_{G}(\alpha_2), hg_2Z_{G}(\alpha_3))\qquad \text{ for } h\in Z_{G}(\alpha_2), \\
  ( g_1 Z_{G}(\alpha_2),  Z_{G}(\alpha_3)) &=  (h g_1 Z_{G}(\alpha_2),  Z_{G}(\alpha_3))\qquad \text{ for } h\in Z_{G}(\alpha_3). 
  \end{align*}
This yields two descriptions of $X$:
\[   Z_{G}(\alpha_2)  \backslash  G / Z_G(\alpha_3) \cong X \cong  Z_{G}(\alpha_3)  \backslash  G / Z_G(\alpha_2). \] 
Using the first identification above, given $Z_G(\alpha_2)gZ_G(\alpha_3)$, points of $X$ correspond to representations $\rho$ satisfying
\begin{align*}
  \rho(S) &= \alpha_2,\\
  \rho(R) &= g\alpha_3g^{-1}.
\end{align*}
The second description associates to $Z_G(\alpha_3)gZ_G(\alpha_2)$ the representation $\rho$ satisfying
\begin{align*}
  \rho(S) &= g\alpha_2g^{-1},\\
  \rho(R) &= \alpha_3.
\end{align*}
Below in our analysis of the large component $X$ we shall use this latter identification of the component. What we ultimately construct is a finite etale cover of a Zariski dense open subset of $X$.

\subsection{Connected components}
\label{ss:connected}
Because there are two conjugacy class options for $\alpha_3$ and only one option for $\alpha_2$, cf. Section \ref{ss:conjclasses}, we identify two components of the moduli space of representations $\Gamma \to G$. For reasons which shall become apparent, we shall refer to the case  $\alpha_3 =  (1,\zeta_3)$ as the large component, and $\alpha_3' = (\zeta_3,1)$ as the small component. See Section \ref{ss:conjclasses} for a description of this notation.

By applying the discussion from Section \ref{s:background} and the start of Section \ref{s:moduli}, we can realize these components as
\[  X = \GL_2^{\short} \backslash G_2 / \SO_{\HH} \qquad \text{and}\qquad \SU_M \backslash G_2 / \SO_{\HH}. \]
We will see that $X$ is four-dimensional. That $\SU_M \backslash G_2 / \SO_{\HH}$ is one-dimensional is well-known  \cite{CDRS, DR1, DR2}.

To study the larger component $X$, it is convenient to first study the intermediate quotient $G_2/\SO_{\HH}$, and then look at the action of $\GL_2^{\short}$ on this quotient.

The smaller component $\SU_M \backslash G_2 / \SO_{\HH}$ was studied in \cite{CDRS, DR1, DR2} using Katz's theory of rigid local systems \cite{katz}. By contrast, if one were to study $\SU_M \backslash G_2 / \SO_{\HH}$ using a  group theoretic approach, it would be convenient to write
\[\SU_M \backslash G_2 / \SO_{\HH} \cong  \SO_{\HH} \backslash G_2 / \SU_M\]
and study first the intermediate quotient $G_2 / \SU_M$. We do not pursue this in this paper and instead focus on $X$.

\subsection{The quotient $G_2 / \SO_{\HH}$}
\label{ss:G2modSOH}
Since $\SO_\HH$ is the stabilizer of the quaternion subalgebra spanned by $1$, $i$, $j$, $ij$, and $G_2$ acts transitively on these subalgebras (as follows from the Cayley-Dickson construction and the fact that there is a unique form of quaternion algebra over an algebraically closed field), the quotient $G_2/\SO_\HH$ is identified with the set of quaternion subalgebras of $\OO$. These subalgebras are in bijection with three-dimensional subspaces $V\subseteq \OO_0$, where the multiplication map has image in the span of $1$ and $V$, and where the norm form restricts to a non-degenerate pairing on this subspace. The multiplication condition is a Zariski-closed condition, whereas the non-degeneracy of the quadratic form is a Zariski-open condition. This shows that the quotient $G_2/\SO_\HH$ is a quasi-projective variety that can be realized as a subvariety of the $(3,7)$-grassmanian.

We will work in the affine chart for the $(3,7)$-grassmanian defined by the span of the columns of matrices of the form
\[ \left(\begin{matrix} 
1 & 0 & 0 \\
0 & 1 & 0 \\
0 & 0 & 1 \\
a_1 & b_1 & c_1 \\
a_2 & b_2 & c_2 \\
a_3 & b_3 & c_3 \\
a_4 & b_4 & c_4
\end{matrix}\right).\]
That means we will be missing some of the moduli space in our description, but this part of the moduli space will prove to be complicated enough to analyze. In any case, this chart covers a dense portion of $X$.

To define the closed subvariety consisting of subspaces generating $4$-dimensional algebras, using the coordinates introduced above and describing $\OO$ via the Cayley-Dickson construction, all three of the following products 
\begin{align*}
  (i,a_1+a_2i + a_3j+a_4ij)(j,b_1+b_2i + b_3j+b_4ij),\\
  (i,a_1+a_2i + a_3j+a_4ij)(ij,c_1+c_2i + c_3j+c_4ij),\\
  (j,b_1+b_2i + b_3j+b_4ij)(ij,c_1+c_2i + c_3j+c_4ij),
\end{align*}
must be contained in
\[  {\rm Span}((1,0), (i,a_1+a_2i + a_3j+a_4ij),(j,b_1+b_2i + b_3j+b_4ij),(ij,c_1+c_2i + c_3j+c_4ij)). \]
Explicit algebraic equations for this condition can be obtained by augmenting the standard chart above with additional columns corresponding to these octonion products:
\begin{equation}\label{augmat1}
\left(\begin{smallmatrix}
1 & 0 & 0 & -a_{2} b_{1} + a_{1} b_{2} + a_{4} b_{3} - a_{3} b_{4} & -a_{2} c_{1} + a_{1} c_{2} + a_{4} c_{3} - a_{3} c_{4} & -b_{2} c_{1} + b_{1} c_{2} + b_{4} c_{3} - b_{3} c_{4} + 1 \\
0 & 1 & 0 & -a_{3} b_{1} - 3 a_{4} b_{2} + a_{1} b_{3} + 3 a_{2} b_{4} & -a_{3} c_{1} - 3 a_{4} c_{2} + a_{1} c_{3} + 3 a_{2} c_{4} - 3 & -b_{3} c_{1} - 3 b_{4} c_{2} + b_{1} c_{3} + 3 b_{2} c_{4} \\
0 & 0 & 1 & -a_{4} b_{1} + a_{3} b_{2} - a_{2} b_{3} + a_{1} b_{4} + 1 & -a_{4} c_{1} + a_{3} c_{2} - a_{2} c_{3} + a_{1} c_{4} & -b_{4} c_{1} + b_{3} c_{2} - b_{2} c_{3} + b_{1} c_{4} \\
a_{1} & b_{1} & c_{1} & a_{3} - 3 b_{2} & 3 a_{4} - 3 c_{2} & 3 b_{4} - c_{3} \\
a_{2} & b_{2} & c_{2} & a_{4} + b_{1} & -a_{3} + c_{1} & -b_{3} - c_{4} \\
a_{3} & b_{3} & c_{3} & -a_{1} + 3 b_{4} & 3 a_{2} + 3 c_{4} & 3 b_{2} + c_{1} \\
a_{4} & b_{4} & c_{4} & -a_{2} - b_{3} & -a_{1} - c_{3} & -b_{1} + c_{2}
\end{smallmatrix}\right).
\end{equation}
The rank of this matrix is at least $3$, and our closed condition amounts to the rank being exactly $3$. Thus, the closed condition is the vanishing of all $4\times 4$-minors of this matrix. A groebner basis for this set of conditions can be easily computed. Using these groebner basis elements it is not hard to show that our closed conditions imply the following identities:
\begin{align*}
  &\det\left(\begin{smallmatrix}
      c_1 & b_1 & a_1\\
      c_2 & b_2 & a_2\\
      c_3 & b_3 & a_3 
  \end{smallmatrix}\right)= a_{3} - 3 b_{2} - c_{1},\\
  &\det\left(\begin{smallmatrix}
      c_1 & b_1 & a_1\\
      c_2 & b_2 & a_2\\
      c_4 & b_4 & a_4 
  \end{smallmatrix}\right)= a_{4} + b_{1} - c_{2},\\
&\det\left(\begin{smallmatrix}
      c_1 & b_1 & a_1\\
      c_3 & b_3 & a_3\\
      c_4 & b_4 & a_4 
  \end{smallmatrix}\right) =- a_{1} + 3 b_{4} - c_{3},\\
&\det\left(\begin{smallmatrix}
      c_2 & b_2 & a_2\\
      c_3 & b_3 & a_3\\
      c_4 & b_4 & a_4 
  \end{smallmatrix}\right)=- a_{2} - b_{3} - c_{4},\\
&\det\stwomat {b_1}{a_1}{b_2}{a_2}-\det\stwomat{b_3}{a_3}{b_4}{a_4}=\det\stwomat{c_1}{b_1}{c_4}{b_4}-\det\stwomat{c_2}{b_2}{c_3}{b_3},\\
&\det\stwomat{b_1}{a_1}{b_3}{a_3} + 3\det\stwomat{b_2}{a_2}{b_4}{a_4}= \det\stwomat{c_2}{a_2}{c_3}{a_3}-\det\stwomat{c_1}{a_1}{c_4}{a_4} ,\\
&\det\stwomat{c_1}{b_1}{c_3}{b_3}+ 3\det\stwomat{c_2}{b_2}{c_4}{b_4}=\det\stwomat{c_3}{a_3}{c_4}{a_4}-\det\stwomat{c_1}{a_1}{c_2}{a_2}.
\end{align*}
In particular, the closed conditions are not homogeneous.

One can check that the ideal of $4\times 4$ minors of \eqref{augmat1} is generated by the minors using the first three columns. Note that by performing column operations on the matrix in \eqref{augmat1} using the first three columns, which won't change the ideal generated by the $4\times 4$-minors using the first three columns, we can express our closed conditions equivalently using the $4\times 4$-minors of the following matrix with more symmetric-looking entries:
\begin{equation}
  \label{eq:cM}
\cM \df \left(\begin{smallmatrix}
1 & 0 & 0 & \det \stwomat{a_1}{b_1}{a_2}{b_2}-\det\stwomat{a_3}{b_3}{a_4}{b_4}  & \det\stwomat{a_1}{c_1}{a_2}{c_2}-\det\stwomat{a_3}{c_3}{a_4}{c_4} & \det\stwomat{b_1}{c_1}{b_2}{c_2}-\det\stwomat{b_3}{c_3}{b_4}{c_4}\\
0 & 1 & 0 & \det\stwomat{a_1}{b_1}{a_3}{b_3}+3\det\stwomat{a_2}{b_2}{a_4}{b_4} & \det\stwomat{a_1}{c_1}{a_3}{c_3} +3\det\stwomat{a_2}{c_2}{a_4}{c_4} &\det\stwomat{b_1}{c_1}{b_3}{c_3} +3\det\stwomat{b_2}{c_2}{b_4}{c_4}\\
0 & 0 & 1 &\det\stwomat{a_1}{b_1}{a_4}{b_4} -\det\stwomat{a_2}{b_2}{a_3}{b_3} & \det\stwomat{a_1}{c_1}{a_4}{c_4}-\det\stwomat{a_2}{c_2}{a_3}{c_3} & \det\stwomat{b_1}{c_1}{b_4}{c_4}-\det\stwomat{b_2}{c_2}{b_3}{c_3}\\
a_{1} & b_{1} & c_{1} & a_{3} - 3 b_{2} - c_{1} & 3 a_{4} + 3 b_{1} - 3 c_{2} & -a_{1} + 3 b_{4} - c_{3} \\
a_{2} & b_{2} & c_{2} & a_{4} + b_{1} - c_{2} & -a_{3} + 3 b_{2} + c_{1} & -a_{2} - b_{3} - c_{4} \\
a_{3} & b_{3} & c_{3} & -a_{1} + 3 b_{4} - c_{3} & 3 a_{2} + 3 b_{3} + 3 c_{4} & -a_{3} + 3 b_{2} + c_{1} \\
a_{4} & b_{4} & c_{4} & -a_{2} - b_{3} - c_{4} & -a_{1} + 3 b_{4} - c_{3} & -a_{4} - b_{1} + c_{2}
\end{smallmatrix}\right)
\end{equation}
This gives a clean description of the closed conditions. Let $I$ be the ideal in these variables defined by the $4\times 4$-minors of $\cM$ above, and set
\begin{equation}
  \label{eq:Aprimedef}
  A' \df \Spec(K[a_1,a_2,a_3,a_4,b_1,b_2,b_3,b_4,c_1,c_2,c_3,c_4]/I).
\end{equation}

To understand the open conditions which imply that a sub-algebra gives a quaternion algebra, recall that the trace-$0$ part of any quaternion algebra has a non-degenerate bilinear form. Conversely, it is well-known that any two perpendicular elements of $\OO_0$ with non-zero norm generate a quaternion sub-algebra.
It follows that the bilinear form on the three-dimensional subspace is non-degenerate if and only if the subspace corresponds to the trace-$0$ elements of a quaternion algebra. For the affine chart above, the matrix associated to the bilinear form in the standard basis coming from the chart will be denoted by $Q$, and it is given by:
\begin{align}\label{def:Q}
Q= \left(\begin{smallmatrix}
    3+a_1^2+3a_2^2+a_3^2+3a_4^2&a_1b_1+3a_2b_2+a_3b_3+3a_4b_4 &a_1c_1+3a_2c_2+a_3c_3+3a_4c_4\\
    a_1b_1+3a_2b_2+a_3b_3+3a_4b_4&1+b_1^2+3b_2^2+b_3^2+3b_4^2&b_1c_1+3b_2c_2+b_3c_3+3b_4c_4\\
    a_1c_1+3a_2c_2+a_3c_3+3a_4c_4&b_1c_1+3b_2c_2+b_3c_3+3b_4c_4&3+c_1^2+3c_2^2+c_3^2+3c_4^2
  \end{smallmatrix}
\right).
\end{align}
The open condition is thus $\det(Q) \neq 0$. Therefore, we also introduce the following open subset of $A'$:
\begin{equation}
  \label{eq:Adef}
  A \df \Spec(K[a_1,a_2,a_3,a_4,b_1,b_2,b_3,b_4,c_1,c_2,c_3,c_4,1/\det(Q)]/I)
\end{equation}
Our aim now is to find a slice inside of $A$ that will give a generically finite cover of a large part of our moduli space $X$.

\begin{rmk}
  While we will not need this fact, the determinant $\det(Q)$ can be related to the invariants $r$, $s$ and $t$ introduced in Section \ref{ss:invariants} below via the identity
  \[
  \det(Q) = 9-3t+r-3s.
  \]
\end{rmk}

\section{An open patch in the moduli space}
\label{p: LargePatch}

We now wish to describe the component $X= \GL_2^{\short}\backslash G_2 / \SO_\HH$, or at least a large Zariski dense open subset of it. Our approach uses the following commutative diagram that we will construct in stages below:
\[
  \xymatrix{
    &W\ar[d]_{\subseteq}&&\\
    \widetilde{X}_0 \ar[r]^{g.f.} & X_0 \ar[r]^{g.f. }\ar[d]_{\subseteq}& Z \ar[r]^{g.f}\ar[dd]^{\subseteq} & Y\\
 & A\ar[d]_{\subseteq}\ar[ur]& &   \\
 & G_2/\SO_{\HH} \ar[r]  & X}
\]
The superscript ``g.f.'' above indicates generically finite maps, and the other objects above are studied in the following places below:
\begin{itemize}
\item $A$ is as defined in \eqref{eq:Adef},
\item $Y$ is defined in section \ref{ss:open},
\item $Z = \GL_2^{\short}\backslash A$ will be seen to be a Zariski open subset of $X$ in Theorem \ref{thm:genericfiniteall},
\item both $X_0$ and $\widetilde{X}_0$ are defined in section \ref{s:slice}, and
 \item The composed map $A\to Z \to Y$ is what we call $f$ in section \ref{ss:open}.
 \end{itemize}

 In Section \ref{s:representation} and below we will work mainly with $\widetilde{X}_0$, and use it to construct a family of representations $\Gamma \to G_2(K)$ giving a finite cover of an open part of the character variety of all such representations.

\subsection{Invariant theory for the double coset}
\label{ss:invariants}

Looking at the standard affine chart for the $(3,7)$-Grassmanian described in Section \ref{ss:connected}, we use introduce coordinates as follows:
\begin{equation}
\label{eq:Mdef}
  M=  \left(\begin{smallmatrix}
a_1 & b_1 & c_1 \\
a_2 & b_2 & c_2 \\
a_3 & b_3 & c_3 \\
a_4 & b_4 & c_4
\end{smallmatrix}\right).
\end{equation}
That is, below we will regard $M$ as a generic point of the standard affine chart of the $(3,7)$-Grassmanian.

The action of $\GL_2^{\short} \cong  \HH_1\times \U_1/ \{\pm 1\}$, as described in Section \ref{sss:d2}, can be interpreted in the following way.
Let $ (a+bi+cj+dij ,e+fi) \in  \HH_1\times \U_1/ \{\pm 1\}$, and denote 
\begin{align*}
  h \df&  \left(\begin{smallmatrix} 
a &3b &c &3d \\
-b & a &-d& c\\
-c& 3d &a &-3b\\
-d& -c&  b&a 
\end{smallmatrix}\right)
 \left(\begin{smallmatrix}
e &-3f & 0&0 \\
f &e & 0&  0\\
0 & 0 &e & -3f\\
0 & 0 &f &e 
\end{smallmatrix}\right), & \phi_h \df& \left(\begin{smallmatrix} 
a^2+3b^2-c^2-3d^2 &-2ad+2bc& 2ac+6bd \\
6ad+6bc &a^2-3b^2+c^2-3d^2& -6ab+6cd \\
-2ac+6bd & 2ab+2cd &a^2-3b^2-c^2+3d^2 
\end{smallmatrix}\right).
\end{align*}
  Then the action of $\GL_2^{\short}$   is given by 
  \[ 
(a+bi+cj+dij ,e+fi) \cdot M = h M    \phi_h^{-1}.
 \]

 To begin our investigation of invariants, we first ignore the closed and open conditions that describe $X$, and work initially on the whole linear space of points $M$ from \eqref{eq:Mdef}. The action of $\SL_2^{\short}$ on this space decomposes as copies of the standard represetantation as well as with two copies of the standard representation tensored with the symmetric square. From this it follows that the torus $T^{\short}\subseteq \SL_2^{\short}$ acts with weights
\[
(3,3,1,1,1,1,-1,-1,-1,-1,-3,-3).
\]
Thus, in the notation of part (d) of Section 4.4 in \cite{mukai}, we have that the $q$-Hilbert series of the action on points $M$ is:
\[
  H_s(q;t) = \frac{1}{(1-q^3t)^2(1-qt)^4(1-q^{-1}t)^4(1-q^{-3}t)^2}
\]
Hence, by Proposition 4.6.3 of \cite{mukai}, the Hilbert series of the $\SL_2^{\short}$-invariant ring is
\[
  H_s(t) = -\Res_{q=0}\left((q-q^{-1})H_s(q;t)\right).
\]
A computer calculation reveals that:
\[
  H_s(t) = 1 +2t^2 + 29t^4 + 95t^6 + 390t^8+1056t^{10}+2882t^{12}+6525t^{14}+\cdots
\]
The degree $d$ coefficients here encode the dimensions of the degree $d$ invariants of $\SL_2^{\short}$, which are thus $\GL_2^{\short}$ semi-invariants. A similar computation that takes the determinant into consideration yields the Hilbert series for the $\GL_2^{\short}$-invariants:
\[
  H(t) = 1 + 2 t^{2} + 11 t^{4} + 31 t^{6} + 94 t^{8} + 222 t^{10} + 516 t^{12} + 1047 t^{14} \cdots
\]
We emphasize that these preceding computations ignore some algebraic conditions encoding that our representations take image in $G_2$, and so the invariants of interest below will be fewer in number than what is described above by $H(t)$. This series is primarily useful for us as it provides upper-bounds on the spaces of invariants of interest below.

We are looking to describe a four-dimensional variety by using sufficiently many invariants that they yield an embedding into affine space. Therefore, we already see from $H(t)$ that there are simply not enough invariants of degree two. It turns out that by using invariants of degree $2$ and $4$ we can achieve our goal. A first obstacle in executing this strategy is to compute a basis for these spaces of invariants. We initially used a mixture of methods to do this, and then we massaged our answers into a particularly nice looking basis that led to a reasonably simple description of $X$. We will briefly summarize the three approaches we used.

\subsubsection{Linear algebra and representation theory} One can use the well-known representation theory of $\GL_2$ and linear algebraic constructions to produce invariants for our group by decomposing the linear space of points $M$ into irreducible subrepresentations of $\GL_2$. One issue with this approach is that decomposing the representation leads to invariants with many monomials in them, as well as some complicated coefficients. Therefore we will not say too much about this standard approach other than to summarize one construction that yields three important invariants.

To describe these invariants, let
\begin{align*}
  I_3&= \left(\begin{smallmatrix}
1 &  &  \\ 
&3& \\
&&1
\end{smallmatrix}\right), &I_4&= \left(\begin{smallmatrix}
1 & & &\\
 &3 & &  \\
 &  &1 & \\
&  &&3 
\end{smallmatrix}\right).
\end{align*}
Since $I_3  \left(\, ^{T}\phi_h^{-1}\right) I_3^{-1} = \phi_h $ and $^{T}h I_4 h = I_4$, the map 
\begin{equation}\label{def:Q0} 
M \mapsto  Q_0 = I_3  \,  {^T}M I_4 M
\end{equation} is equivariant for the action of $\GL_2^{\short}$, which on the right is  given  by
\[   (a+bi+cj+dij ,e+fi) \cdot Q_0 =  \phi_h Q_0  \phi_h^{-1}.
\]
The characteristic polynomial of $Q_0$ is given by $x^3+tx^2+rx+9s$, where $t$, $r$ and $s$ are homogeneous of degree $2$, $4$ and $6$, respectively. If we take the closed conditions from Subsection \ref{ss:G2modSOH} underlying $X$ into consideration, then these invariants satisfy the nontrivial, but easily verified, identity
\[
  r= 6s-\tfrac{1}{4}(t-s)^2.
\]
Without the closed conditions, the invariants $t,s,r$ appear to be algebraically independent.

%\begin{rmk}
%  Noting that $A_M \coloneqq I_3  \,  {^T}M I_4 M$ is self-adjoint with respect to the inner product $\langle v, w\rangle = \, ^{T}v I_3^{-1} w$, by the spectral theorem it is diagonalizable.  Since $ \HH_1 \times \U_1/ \{\pm 1\}$  is acting by similarity transforms, it follows that each element $M$ has a representative where $A_M$ is diagonal. 
%\end{rmk}

\subsubsection{Computational approach}
\label{s:computational}
A second approach to computing invariants of low degree is simply to intersect $1$-eigenspaces of sufficiently many matrices acting on polynomial functions on $M$ of the given degree. This is computationally feasible at least up to degree $6$, and we are able to compute bases of invariants in degrees $2$ and $4$. The degree $2$ invariants are spanned by 
\begin{align*}
t&= -a_1^2-3a_2^2-a_3^2-3a_4^2-3b_1^2-9b_2^2-3b_3^2-9b_4^2-c_1^2-3c_2^2-c_3^2-3c_4^2,\\
u &= 3\det\stwomat{a_1}{b_1}{a_4}{b_4} - 3\det\stwomat{a_2}{b_2}{a_3}{b_3}-\det\stwomat{a_1}{c_1}{a_3}{c_3}+3\det\stwomat{b_1}{c_1}{b_2}{c_2}-3\det\stwomat{a_2}{c_2}{a_4}{c_4}-3\det\stwomat{b_3}{c_3}{b_4}{c_4},
\end{align*}
where $t$ is the degree $2$ invariant produced in the preceding section.

\subsubsection{Determinantal approach}
After computing bases of invariants using the preceding approaches, we observed that many of the invariants can be described as determinants. Here is a systematic way to construct directly a number of such determinantal invariants.

Let $\cV$ denote $192$-dimensional vector space consisting of the set of $4\times 4$-matrices whose entries are linear functions in our variables $a_1,\ldots, c_4$. If we write elements of $\cV$ as matrix-valued functions $P(M)$ where $M$ is our generic matrix of variables, then we have a right action of $\GL_2^{\short}$
\[
  P(M)\bullet h = h^{-1}P(hM\phi_h^{-1})\stwomat{\phi_h}{0}{0}{1}
\]
Notice that if $P(M)$ is invariant for this action, then $\det P(M)$ is a degree-$4$ invariant for the action $h \cdot M = hM\phi_h^{-1}$ on $M$.

The space of invariants in $\cV$ is $6$-dimensional, spanned by:
\begin{align*}
P_1 &= \left(\begin{smallmatrix}
a_{1} & b_{1} & c_{1} & 0 \\
a_{2} & b_{2} & c_{2} & 0 \\
a_{3} & b_{3} & c_{3} & 0 \\
a_{4} & b_{4} & c_{4} & 0
\end{smallmatrix}\right), & P_2 &= \left(\begin{smallmatrix}
0 & 0 & 0 & a_{1} - 3 b_{4} + c_{3} \\
0 & 0 & 0 & a_{2} + b_{3} + c_{4} \\
0 & 0 & 0 & -a_{3} + 3 b_{2} + c_{1} \\
0 & 0 & 0 & -a_{4} - b_{1} + c_{2}
\end{smallmatrix}\right),\\
  P_3&= \left(\begin{smallmatrix}
-3 a_{2} & -3 b_{2} & -3 c_{2} & 0 \\
a_{1} & b_{1} & c_{1} & 0 \\
-3 a_{4} & -3 b_{4} & -3 c_{4} & 0 \\
a_{3} & b_{3} & c_{3} & 0
\end{smallmatrix}\right),&
  P_4 &= \left(\begin{smallmatrix}
0 & 0 & 0 & -3 a_{2} - 3 b_{3} - 3 c_{4} \\
0 & 0 & 0 & a_{1} - 3 b_{4} + c_{3} \\
0 & 0 & 0 & 3 a_{4} + 3 b_{1} - 3 c_{2} \\
0 & 0 & 0 & -a_{3} + 3 b_{2} + c_{1}
\end{smallmatrix}\right), \\ P_5 &=\left(\begin{smallmatrix}
-3 b_{3} - 3 c_{4} & a_{3} - c_{1} & 3 a_{4} + 3 b_{1} & 0 \\
-3 b_{4} + c_{3} & a_{4} - c_{2} & -a_{3} + 3 b_{2} & 0 \\
-3 b_{1} + 3 c_{2} & a_{1} + c_{3} & -3 a_{2} - 3 b_{3} & 0 \\
-3 b_{2} - c_{1} & a_{2} + c_{4} & a_{1} - 3 b_{4} & 0
\end{smallmatrix}\right), & P_6&=\left(\begin{smallmatrix}
-3 b_{4} + c_{3} & a_{4} - c_{2} & -a_{3} + 3 b_{2} & 0 \\
b_{3} + c_{4} & -\frac{1}{3} a_{3} + \frac{1}{3} c_{1} & -a_{4} - b_{1} & 0 \\
-3 b_{2} - c_{1} & a_{2} + c_{4} & a_{1} - 3 b_{4} & 0 \\
b_{1} - c_{2} & -\frac{1}{3} a_{1} - \frac{1}{3} c_{3} & a_{2} + b_{3} & 0
\end{smallmatrix}\right).
\end{align*}
Taking determinants of various linear combinations of these matrices yields the following $8$ linearly independent degree $4$ invariants:
\begin{align*}
  D_1 &=\det(P_1+P_2), & D_2 &=\det(P_1+P_4),\\
  D_3 &=\det(P_4+P_5), & D_4 &=\det(P_2+P_5),\\
  D_5 &=\det(P_1+P_4+P_5), & D_6 &=\det(P_1+P_4+P_6), \\
  D_7 &=\det(P_1+P_2+P_5+P_6), & D_8 &=\det(P_1+P_2-P_5-P_6).
\end{align*}
These can then be augmented to a basis for the space of degree $4$ invariants by adding
\begin{align*}
D_{9}&= t^2,\\
  D_{10}&=u^2,\\
  D_{11} &= \left(\det\stwomat{b_1}{a_1}{b_2}{a_2}+\det\stwomat{b_3}{a_3}{b_4}{a_4}\right)^2+\tfrac{1}{3}\left(\det\stwomat{c_1}{a_1}{c_2}{a_2}+\det\stwomat{c_3}{a_3}{c_4}{a_4}\right)^2+\left(\det\stwomat{c_1}{b_1}{c_2}{b_2}+\det\stwomat{c_3}{b_3}{c_4}{b_4}\right)^2.
\end{align*}
Again, up to now we have ignored the closed conditions underlying $X$. We now address these conditions.

\subsubsection{Addressing the closed conditions} 
These preceding invariants have been ordered such that $D_1$, $D_2$ and $D_3$ are exactly those invariants that are contained in the ideal defined by our closed conditions, and so effectively we can ignore them. More generally, one computes that modulo the closed conditions defining $X$ --- which are not homogeneous --- the following relations hold:
\begin{align*}
  -(1/3)D_4+(1/12)D_9-(1/4)D_6-(1/3)D_{10} &=t+2u,\\
    (4D_4-D_9+3D_6+4D_{10})^2 &= 432D_6,\\
    t^2+2tu-D_5+(3/2)(D_7+D_8)&=10t+20u.
\end{align*}
Thus, after imposing the closed conditions we may also ignore the invariants $D_5$ and $D_6$. Since we are using $t$ and $u$ in our embedding data, we can also omit $D_9$ and $D_{10}$. A computation shows that these invariants satisfy $P(t,u,D_7,D_8,D_{11})=0$ where
\begin{align*}
  & P(t,u,D_7,D_8,D_{11})\\
  =& 18D_7+18(3+D_{11})t+3(36+D_7-D_8+12D_{11})u+96u^2+48tu+8tu^2+16u^3.
\end{align*}

Thus, let
\begin{align*}
  Y' &= \Spec(K[t,u,D_7,D_8,D_{11}]/(P)) \\
  &= \{(t,u,D_7,D_8,D_{11}) \in \bA^5\mid P(t,u,D_7,D_8,D_{11}) = 0\}.
\end{align*}
Then our invariants $t,u,D_7,D_8,D_{11}$ can be used to define a map $f\colon A' \to Y'$, where $A'$ is as in \eqref{eq:Aprimedef}, that descends to the quotient of $A'$ by $\GL_2^{\short}$, since the map is defined by invariants for this action. 

\subsubsection{Addressing the open condition}
\label{ss:open}
We next describe how the open condition interacts with this map $f$. For this, let $\det(Q)$, where $Q$ is as in \eqref{def:Q}, be the invariant corresponding to the nonzero discriminant condition. Note that in terms of previous invariants we have
\[
  \det(Q) = 9-3t+r-3s
\]
and this is an equality without needing the closed conditions. If we take the closed conditions into consideration then we have the following identities:
\begin{align*}
  s &= t-2u,\\
  r &= u^2+6t+12u,\\
  \det(Q) &= (u+3)^2.
\end{align*}
Therefore, the map we wish to study, taking both the closed and open conditions into consideration, has the following description:
\begin{align*}
  A &= \Spec(\QQ[a_1,a_2,a_3,a_4,b_1,b_2,b_3,b_4,c_1,c_2,c_3,c_4,1/Q]/I),\\
  Y &= \{(t,u,D_7,D_8,D_{11}) \in \bA^5\mid P(t,u,D_7,D_8,D_{11}) = 0,~ (u+3)^2\neq 0\},\\
  g &\colon A \to Y,\\
  g(M) &= (t,u,D_7,D_8,D_{11}).
\end{align*}
Recall that $I$ is the ideal of closed conditions generated by the $4\times 4$-minors of the matrix $\cM$ from \eqref{eq:cM}. The map $g$ factors through $X$, and we wish to find a subvariety $S \subseteq A$ such that $f\colon S \to X$ is finite with large image.

\subsection{A slice in $A$ }
\label{s:slice}
The aim of this section is to identify a subvariety $X_0$ of $A$ for which the map $X_0 \rightarrow X$ is generically finite. To this end we wish to identify particularly simple representatives for elements of $X$. Recall that the association $M \mapsto Q_0$ as in \eqref{def:Q0} gives an $\SO_{\HH}$-equivariant map from $A$ to $3 \times 3$-symmetric matrices where the action of $(h_1,h_2)\in \SO_{\HH}$ on $Q_0$ is via $h_2$ acting as $\phi_{h_2} Q_0 \phi_{h_2}^{-1}$. Consequently, we have the following Lemma:

\begin{lem}
  \label{lem:modSO4}
The following hold:
\begin{enumerate}
 \item every element of $A$ has a $\GL_2^{\rm short}$-representative where $Q_0$ is diagonal.
 \item two diagonal matrices $Q_0$ are in the same $\GL_2^{\rm short}$-orbit if they have the same diagonal entries up to permutation.
\item every element of $A$ where $Q_0 \neq 3\Id_3$  has an $\SO_{\HH}$ representative of the form 
%\[
% \begin{pmatrix} a_1 & 0 & 0 \\
%                           0 & 0 & 0 \\
%                           0 & 0 & c_3 \\
%                           0 & b_4 & 0 \end{pmatrix} 
%                           \qquad \text{where}\qquad
%  a_1b_4c_3+a_1-3b_4+c_3=0.
%\]
\[
 \begin{pmatrix} 0 & 0 & 0 \\
                           a_2 & 0 & 0 \\
                           0 & b_3 & 0 \\
                           0 & 0 & c_4 \end{pmatrix} 
                           \qquad \text{where}\qquad
  a_2b_3c_4-a_2-b_3-c_4=0.
\]
We denote by $W$ the subvariety $A$ consisting of such elements. 
\end{enumerate}

\end{lem}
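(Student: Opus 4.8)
The plan is to exploit the $\SO_\HH$-equivariance of the map $M \mapsto Q_0$ established in \eqref{def:Q0}, where $(h_1,h_2) \in \SO_\HH$ acts on $Q_0$ through $\phi_{h_2}$ by conjugation $Q_0 \mapsto \phi_{h_2} Q_0 \phi_{h_2}^{-1}$. For part (1), I would first observe that $Q_0 = I_3\,{}^T\!M\,I_4\,M$ is symmetric with respect to the form $I_3$, in the sense that $I_3^{-1} Q_0$ is genuinely symmetric; since $\phi_{h_2}$ realizes the action of $\HH_1$ on the trace-zero quaternions $\HH_0$ and thus ranges over a copy of $\SO_3$ preserving $I_3$, the orbit of $Q_0$ under $\GL_2^{\short}$ contains the conjugates of $Q_0$ by this $\SO_3$. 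A symmetric matrix (relative to a fixed nondegenerate form) can be diagonalized by an orthogonal change of basis, so a suitable choice of $h_2$ puts $Q_0$ into diagonal form. The key point is that every element of $A$ has a $\GL_2^{\short}$-representative achieving this, which follows because the $\HH_1$-factor acting via $\phi_{h_2}$ surjects onto the relevant $\SO_3$.

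For part (2), the claim is that the diagonal entries of $Q_0$, taken as an unordered multiset, are a complete invariant for the $\GL_2^{\short}$-orbit among diagonal $Q_0$. Here I would use that the characteristic polynomial of $Q_0$ is $x^3+tx^2+rx+9s$ with $t,r,s$ being $\GL_2^{\short}$-invariants (as recorded just before Lemma~\ref{lem:modSO4}); hence two $Q_0$ in the same orbit must have the same characteristic polynomial, forcing the same eigenvalues, i.e.\ the same diagonal multiset. Conversely, two diagonal matrices with the same entries up to permutation differ by a permutation matrix, and the permutations of the three coordinates are realized inside the $\SO_3$ coming from $\phi_{h_2}$ (these are exactly the Weyl-group reflections of the relevant $A_1\times A_1$ factor acting on $\HH_0$), so they lie in one $\GL_2^{\short}$-orbit.

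For part (3), I would start from the normal form obtained above and work relative to the residual $\SO_\HH$-symmetry that remains after fixing $Q_0$ diagonal. The displayed representative has only $a_2,b_3,c_4$ nonzero, arranged so that the three columns of $M$ point in the three ``second'' coordinate directions. I expect to obtain this by using the $h_1$-action (the other $\HH_1$-factor, acting via $h \circ v = h_1 v \overline{h_1}$ on the standard representation spanned by $k,ik,jk,(ij)k$) to clear out the unwanted entries column by column, since this action is independent of the $\phi_{h_2}$-action that was used to diagonalize $Q_0$ and therefore does not disturb the diagonalization. The single scalar relation $a_2b_3c_4 - a_2 - b_3 - c_4 = 0$ should emerge as the surviving closed condition: I would substitute the reduced matrix into the generators of the ideal $I$ (the $4\times 4$-minors of $\cM$), and verify that all of them vanish except for the one producing this cubic. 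The hypothesis $Q_0 \neq 3\,\Id_3$ is exactly what guarantees the normalization can be carried out nondegenerately, i.e.\ that the columns are not in the excluded symmetric configuration.

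The main obstacle I anticipate is part (3): showing that the $h_1$-action suffices to reach \emph{precisely} this sparse form with nonzero entries only in the $(2,1),(3,2),(4,3)$ positions, and checking that no further constraints beyond $a_2b_3c_4-a_2-b_3-c_4=0$ survive. This requires carefully tracking how the $D_2 = A_1\times A_1$ structure decomposes the action—with $\phi_{h_2}$ governing the diagonalization of $Q_0$ on $\HH_0$ and the $h_1$-action governing the standard four-dimensional representation—and confirming that these two actions are genuinely independent so that normalizing one does not undo the other. The verification that the closed conditions collapse to the single cubic is a direct but delicate computation with the minors of $\cM$, and getting the precise relation (rather than an equivalent but messier one) is where most of the care will be needed.
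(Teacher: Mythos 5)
Your parts (1) and (2) follow the paper's own route: (1) is the spectral theorem together with the observation that the quaternionic-conjugation factor of $\GL_2^{\rm short}$ surjects, via $\phi_h$, onto the special orthogonal group of the form on $\HH_0$, and (2) is proved in the paper by exhibiting explicit elements of $\HH_1$ (for instance $\tfrac{1}{\sqrt{2}}+\tfrac{1}{\sqrt{6}}i$, $\tfrac{1}{\sqrt{2}}+\tfrac{1}{\sqrt{2}}j$, $\tfrac{1}{\sqrt{2}}+\tfrac{1}{\sqrt{6}}ij$) acting as the three transpositions of the diagonal entries; your converse via the invariants $t,r,s$ is harmless but unnecessary, since the lemma only asserts the ``if'' direction.

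Part (3) is where your plan has a genuine gap, for two related reasons. First, the actions are crossed: by the conventions of Section \ref{sss:d2}, $(h_1,h_2)$ acts on the summand spanned by $k$, $ik$, $jk$, $(ij)k$ by $v \mapsto h_2 v \overline{h_1}$, while $u \mapsto h_1 u \overline{h_1}$ is the action on $\HH_0$ --- and the latter, through $\phi_{h_1}$, is precisely the conjugation that was spent diagonalizing $Q_0$. The only continuous symmetry leaving a generic diagonal $Q_0$ untouched is left multiplication $v\mapsto h_2 v$, a three-dimensional group, and it is exactly enough to normalize the \emph{first} column to $(0,a_2,0,0)$; after that it is used up (left multiplication is free), and the residual symmetry is finite. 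Second, and consequently, ``clearing the unwanted entries column by column'' by group elements cannot work: the $\SO_{\HH}$-orbit of the three-parameter sparse family has dimension at most $3+6=9$, while every $M$ in the $12$-dimensional chart has diagonalizable $Q_0$, so a purely group-theoretic reduction would prove a false statement for matrices outside $A$. In the paper the sparse shape is forced by membership in $A$ during the reduction, not checked afterwards: orthogonality of columns gives $b_2=c_2=0$; the $4\times 4$-minor conditions from \eqref{augmat1} then give $(a_2^2-1)b_1=0$, hence $b_1=0$ by the open condition, then $c_1=0$ and $c_3=3b_4$; perpendicularity of the last two columns gives $b_4(b_3+c_4)=0$, and the branch $c_4=-b_3$ is eliminated by nondegeneracy (a case analysis your outline does not anticipate), leaving $b_4=0$. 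Only then does substituting the reduced matrix into \eqref{augmat1} yield the single surviving cubic $a_2b_3c_4-a_2-b_3-c_4=0$, which is the one step of your final verification that does match the paper.
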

\begin{proof}
The first claim follows from an appropriately general version of the spectral theorem as the elements $\phi_h$ surject onto the special orthogonal group for the bilinear form on ${\HH}_0$. Notice then that if we restrict to considering matrices $M$ mapping onto diagonal matrices $Q_0$, the columns of $M$ are then orthogonal. We will make use of this observation when proving claim (3) below.

The second claim can be proved explicitly, consider for example the elements
\[  \tfrac{1}{\sqrt{2}}+\tfrac{1}{\sqrt{6}}i, \quad \tfrac{1}{\sqrt{2}}+\tfrac{1}{\sqrt{2}}j, \quad  \tfrac{1}{\sqrt{2}}+\tfrac{1}{\sqrt{6}}ij \]
in ${\HH}_1$. These act like the three transpositions of the diagonal entries of $Q_0$ and thus they generate all permutations of the diagonal entries.

For the third claim notice that if $Q_0  \neq 3{\rm Id}_3$ is non-zero, then without loss of generality the diagonal entry of $Q_0$ which is not $3$ can be ordered to be the first. Thus, if we define $\vec{a} = a_1 + a_2i + a_3j+a_4ij$ using the first column of a matrix $M \in A$ mapping onto $Q_0$, then $\vec{a}$ is a non-degenerate element of ${\HH}$, and hence there is $h_1\in {\HH}_1$ so that the only nonzero coordinate of $h_1\vec{a} = (0,a_2,0,0)$ is the $i$-coordinate. Therefore, using our orthogonality observation in the proof of claim (1), every element of $A$ has a representative of the form
\[
 \begin{pmatrix} 0 & b_1 & c_1 \\
                           a_2 & 0 & 0 \\
                           0 & b_3 & c_3 \\
                           0 & b_4 & c_4 \end{pmatrix} \]
where the second two columns are perpendicular, thought of as elemenets of ${\HH}$. Having specialized $a_1$, $a_3$, $a_4$, $b_2$ and $c_2$ to be zero, the condition from \eqref{augmat1} that the fourth column of the matrix in \eqref{augmat1} should be in the span of the first three quickly gives 
\[ (a_2^2-1)b_1 = 0. \]
The open condition, $\det Q_0\neq 0$, implies $a_2^2-1 \neq 0$ giving $b_1=0$. Similarly, using the fifth column in \eqref{augmat1} being in the span of the first three gives $c_1=0$. Now, looking at the fourth row of the sixth column gives $c_3=3b_4$. Perpendicularity between the second and third column then gives $b_4(b_3+c_4) = 0$.

In the case $b_4=0$ we have the desired shape, so consider the case $c_4=-b_3$.
Looking at the first four columns now gives
\[ -a_2-b_3 = 3a_2b_4 b_2 - (1-a_2b_3)b_3 \]
which gives $a_2(1+b_3^2+3b_4^3)$, however, by assumption $a_2\neq0$ and $1+b_3^2+3b_4^3\neq 0$ from the non-degeneracy of $Q_0$. 

Now, given that the only non-zero entries are $a_2$, $b_3$, and $c_4$
one may then directly verify from the third and forth column of \eqref{augmat1} the equation
\[
  a_2b_3c_4-a_2-b_3-c_4=0.\qedhere
\]
\end{proof}

\begin{rmk}
Note that, in the preceding proof, when $Q_0$ is zero, the associated representation will factor through an $\SU$-subgroup. In particular, the special cases when $a_2=0$ all factor through an $\SU$-subgroup. See Section \ref{ss:factoringA2} for details.
\end{rmk}

\begin{lem}
\label{lem: gl2reps}
Every element of $A$  where $Q_0 \neq 3\Id_3$ has a $\GL_2^{\rm short}$-representative of the form:
%\[
% \begin{pmatrix} aw & -3bz & -cy \\
%                           ax & by & -cz \\
%                           ay & -3bx & cw \\
%                           az & bw & cx \end{pmatrix} 
%                           \qquad \text{where}\qquad
%  a_1b_4c_3+a_1-3b_4+c_3=0 \text{ and } w^3+3x^2+y^2+3z^2=1 
%\]
\[
 \begin{pmatrix} -3ax & -by & -3cz \\
                           aw & -bz & cy \\
                           3az & bw & -3cx \\
                           -ay & bx & cw \end{pmatrix} 
                           \qquad \text{where}\qquad
   abc-a-b-c=0 \text{ and } w^3+3x^2+y^2+3z^2=1.
\]
Let $X_0$ denote the subset of $A$ where the elements all have a representative of the form:
%\[
% \begin{pmatrix}  & -3bz & -cy \\
%                           ax & by & -cz \\
%                           ay & -3bx & \\
%                           az &  & cx \end{pmatrix} 
%                           \qquad \text{where}\qquad
%  abc+a-3b+c=0 \text{ and } 3x^2+y^2+3z^2=1 
%\]
\[
 \begin{pmatrix} -3ax & -by & 0 \\
                           aw & 0 & cy \\
                           0 & bw & -3cx \\
                           -ay & bx & cw \end{pmatrix} 
                           \qquad \text{where}\qquad
   abc-a-b-c=0 \text{ and } w^3+3x^2+y^2=1.
\]

Then the induced map $X_0 \rightarrow X$ has open image and is generically finite.
 \end{lem}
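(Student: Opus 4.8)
The plan is to exhibit $X_0$ as a $\GL_2^{\short}$-slice over the dense open locus $\{Q_0 \neq 3\Id_3\} \subseteq A$, and to read off both claims from this. I would first establish the first displayed normal form. Given $M \in A$ with $Q_0 \neq 3\Id_3$, Lemma \ref{lem:modSO4}(1) lets me use the $\HH_1$-factor of $\GL_2^{\short}$ to arrange that $Q_0$ is diagonal; as observed in the proof of that lemma, this forces the three columns of $M$ to be mutually orthogonal when viewed as elements of $\HH$. Substituting this orthogonality into the closed conditions, namely the vanishing of the $4\times 4$-minors of $\cM$ from \eqref{eq:cM}, and solving the resulting system, pins the columns down to be the scalar multiples $a\cdot(qi)$, $b\cdot(qj)$, $c\cdot(q(ij))$ of the products of a single norm-one quaternion $q = w+xi+yj+zij \in \HH_1$ with the units $i,j,ij$, subject to $abc - a - b - c = 0$; this is precisely the first form, the defining quadric being the norm-one condition $N(q)=1$.

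Next I would cut this down to $X_0$ using the central factor $\U_1 = 1\times\U_1$ of $\GL_2^{\short}$. Since $\U_1$ has $\phi_{h_1} = \Id_3$, it fixes $Q_0$ and the top $3\times 3$ block and acts on each column by left multiplication by $h_2 = e+fi$; on the data this is simply $q \mapsto h_2 q$, because $h_2(qi)=(h_2 q)i$ and similarly for $j$ and $ij$. A direct computation gives that the $ij$-coordinate of $h_2 q$ equals $ez + fy$, so choosing $(e,f)\in\U_1$ with $ez+fy=0$ — possible whenever $(y,z)\neq(0,0)$, hence generically — sends $z$ to $0$ and places the representative in the $X_0$-form. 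Thus $\GL_2^{\short}\cdot X_0 \supseteq \{Q_0\neq 3\Id_3\}$. Conversely $\GL_2^{\short}$ preserves this locus since $Q_0 \mapsto \phi_{h_1}Q_0\phi_{h_1}^{-1}$ is a conjugation, so $\GL_2^{\short}\cdot X_0 = \{Q_0\neq 3\Id_3\}$. This set is open and $\GL_2^{\short}$-stable, hence saturated for the quotient $A \to X$, so its image in $X$ is open; this image is exactly the image of $X_0 \to X$.

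For generic finiteness I would combine a dimension count with a bound on stabilizers. The variety $X_0$ is irreducible of dimension $4$, being parametrized by $(a,b,c,w,x,y)$ subject to the two equations $abc-a-b-c=0$ and $N(q)=1$, and $X$ is irreducible as the image of the irreducible $A$, where $\dim A = \dim(G_2/\SO_{\HH}) = 8$. To see $\dim X = 4$ it suffices to show the generic $\GL_2^{\short}$-stabilizer is finite: for generic $M\in X_0$ the diagonal matrix $Q_0$ has distinct entries, so any $g=(h_1,h_2)$ fixing $M$ must have $\phi_{h_1}$ commuting with a regular diagonal matrix, forcing $\phi_{h_1}$ to be one of the finitely many signed diagonal elements of $\SO_3$; this confines $h_1$ to a finite set, and the column equations then confine $h_2\in\U_1$ to a finite set as well. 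Hence generic orbits are $4$-dimensional and $\dim X = \dim A - 4 = 4$. Since $X_0 \to X$ is dominant (open image) between irreducible varieties of equal dimension $4$, its generic fibre is $0$-dimensional, i.e. the map is generically finite.

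The principal obstacle is the first step: extracting the explicit first normal form from the minor ideal $I$. Diagonalizing $Q_0$ is routine, but deducing that the mutually orthogonal columns are forced to be $q\cdot\{i,j,ij\}$ for one common $q$, together with the cubic relation $abc-a-b-c=0$, requires a genuine manipulation of $I$ for which a computer algebra computation is essentially unavoidable. A secondary point requiring care is verifying that $Q_0$ has distinct eigenvalues on a dense subset of $X_0$ — so that the stabilizer bound is valid — rather than the closed conditions forcing a coincidence; this can be settled by exhibiting a single $X_0$-point whose $Q_0$ has distinct diagonal entries.
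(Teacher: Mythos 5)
Your overall architecture (normal form from the quaternion-orbit description, then a $4=4$ dimension count with finite generic stabilizers) matches the paper in substance, but there is one genuine error in your $z$-removal step. You need $(e,f)\in\U_1$, i.e.\ $e^2+3f^2=1$, with $ez+fy=0$; the solutions of the linear equation are $(e,f)=\lambda(y,-z)$, and the norm condition becomes $\lambda^2(y^2+3z^2)=1$. This is solvable precisely when $y^2+3z^2\neq 0$, \emph{not} merely when $(y,z)\neq(0,0)$: over the algebraically closed and finite fields relevant here, $-3$ may be a square, so nonzero isotropic pairs $(y,z)$ exist. Consequently your asserted equality $\GL_2^{\short}\cdot X_0=\{Q_0\neq 3\Id_3\}$ is false --- the paper's remark immediately following the lemma records exactly the missing locus, namely points all of whose representatives satisfy $w^2+3x^2=1$ and $y^2+3z^2=0$, a subvariety of dimension at most $3$ --- and with it your saturation argument for openness of the image collapses: the image of $X_0$ is a priori only a constructible set containing a dense open subset of the image of $\{Q_0\neq 3\Id_3\}$, and a constructible dense subset of an open set need not be open. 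The repair is to restrict to the dense open locus where $y^2+3z^2\neq0$ can be achieved, as the paper implicitly does; your dominance-between-irreducible-$4$-folds conclusion for generic finiteness survives unchanged, since the bad locus has positive codimension, and this is all the paper's own (similarly loose) proof establishes.

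Separately, the step you flag as the ``principal obstacle'' --- extracting the first normal form from the minor ideal of \eqref{eq:cM} by a computer elimination --- is avoidable, and this is where you diverge from the paper. The paper deduces the first claim in one line from part (3) of Lemma \ref{lem:modSO4}: since $W$ contains an $\SO_{\HH}$-representative of every point with $Q_0\neq3\Id_3$, and every element of $\SO_{\HH}=\HH_1\times\HH_1/\{\pm1\}$ factors trivially as $(h_1,h_2)=(h_1,1)\cdot(1,h_2)$ with $(h_1,1)\in\GL_2^{\short}$, the orbit of $W$ under the complementary $\HH_1$-factor acting by left multiplication --- which is exactly your family of columns $a(qi)$, $b(qj)$, $c(q(ij))$ with $q=w+xi+yj+zij$ of norm one --- already meets every $\GL_2^{\short}$-orbit in $\{Q_0\neq3\Id_3\}$. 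In particular you need not prove your stronger claim that the diagonal-$Q_0$ locus coincides exactly with this family (plausible generically, but delicate when $Q_0$ has repeated or special entries, and left unexecuted in your proposal). Your stabilizer bound for $\dim X=4$ is correct and is consistent with the paper's later, more precise computations of $H_m$ and $G_{(1,h)m}$.
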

 \begin{proof}
 For the first claim we simply notice that this is precisely the set of ${\HH}_1 \times 1$ orbits of elements of the form
 \[
 \begin{pmatrix} 0 & 0 & 0 \\
                           a & 0 & 0 \\
                           0 & b & 0 \\
                           0 & 0 & c \end{pmatrix} 
                           \qquad \text{where}\qquad
  abc-a-b-c=0.
\]
Therefore the first claim follows by Lemma \ref{lem:modSO4}.

The second claim follows from the observation that the map
\[ \{ (w,x,y)\;|\; w^2+3x^2+y^2\} \rightarrow {\HH}_1 /\U_1 \]
has open image and is generically finite, and that
\[
 \begin{pmatrix} -3ax & -by & 0 \\
                           aw & 0 & cy \\
                           0 & bw & -3cx \\
                           -ay & bx & cw\end{pmatrix} 
                           \qquad \text{where}\qquad
  abc-a-b-c=0 \text{ and } w^2+3x^2+y^2=1 
\]
is the natural image of $\{ (w,x,y)\;|\; w^2+3x^2+y^2\} $, which is not a subgroup, acting on 
 \[
 \begin{pmatrix} 0 & 0 & 0 \\
                           a & 0 & 0 \\
                           0 & b & 0 \\
                           0 & 0 & c \end{pmatrix} 
                           \qquad \text{where}\qquad
  abc-a-b-c=0.
\]
Because $X_0$ is $4$-dimensional and surjects onto an open in $X$, which is also $4$ dimensional, we conclude that the map is generically finite.
 \end{proof}

\begin{rmk}
We note that one could cover all of ${\HH}_1 /\U_1$ by a finite collection of similar sets to find all elements where $Q_0$ is not equal to $3\Id_3$. Those elements which do not have a representative as above will all be of the form $(w,x,y,z)$ where $w^2+3x^2=1$ and $y^2+3z^2=0$. As these comprise a subvariety of dimension at most $3$ in our ambient $4$-dimensional space, we will ignore these nongeneric families below in our analysis, for the sake of brevity.
\end{rmk}

\begin{lem}\label{lem:X0}
  Let $\widetilde{X}_0$ denote the space
  \[
 \widetilde{X}_0 = \{  ((w,x,y),(a,b,c))\;|\; w^2+3x^2+y^2=1,\;abc-a-b-c=0\}.
  \]
  Then the map
\[ \widetilde{X}_0 \rightarrow X_0 \subset A \]
given by
\[  ((w,x,y),(a,b,c)) \mapsto  \begin{pmatrix} -3ax & -by & 0 \\
                           aw & 0 & cy \\
                           0 & bw & -3cx \\
                           -ay & bx & cw\end{pmatrix}   \]
is $2$ to $1$.
\end{lem}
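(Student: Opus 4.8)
The plan is to realize the $2$-to-$1$ map as the quotient by an explicit fixed-point-free involution and then to verify by a direct reconstruction that the fibres contain nothing beyond a single orbit. Write $\pi$ for the map in the statement, and let $v_1=(-3x,w,0,-y)$, $v_2=(-y,0,w,x)$, $v_3=(0,y,-3x,w)$ be the three columns obtained by setting $a=b=c=1$; then the columns of $\pi\bigl((w,x,y),(a,b,c)\bigr)$ are exactly $a\,v_1$, $b\,v_2$, $c\,v_3$, and each $v_j$ is \emph{linear} (homogeneous of degree one) in $(w,x,y)$. Introduce
\[
\sigma\bigl((w,x,y),(a,b,c)\bigr)=\bigl((-w,-x,-y),(-a,-b,-c)\bigr).
\]
I would first check that $\sigma$ preserves $\widetilde X_0$ (the equation $w^2+3x^2+y^2=1$ is even, and $(-a)(-b)(-c)-(-a)-(-b)-(-c)=-(abc-a-b-c)=0$), that $\sigma$ is fixed-point-free (a fixed point forces $(w,x,y)=0$, impossible on the conic), and that $\pi\circ\sigma=\pi$ (each product $s\,v_j$ is unchanged when $s$ and $(w,x,y)$ simultaneously change sign). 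This already shows every fibre of $\pi$ is a union of $\sigma$-orbits, each of size exactly two.

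Next I would establish the two facts that drive the reconstruction: each map $(w,x,y)\mapsto v_j$ is injective linear, and $v_j\neq 0$ for every $(w,x,y)$ with $w^2+3x^2+y^2=1$ (each $v_j$ vanishes only at $(w,x,y)=0$). As an aside, with respect to the form $I_4=\operatorname{diag}(1,3,1,3)$ the $v_j$ are mutually orthogonal with squared lengths $3,1,3$ once $w^2+3x^2+y^2=1$, so $a^2,b^2,c^2$ can literally be read off from the $I_4$-norms of the columns of $\pi(p)$; but the count below does not require this.

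Then I would run the reconstruction. Suppose $p=((w,x,y),(a,b,c))$ and $q=((w',x',y'),(a',b',c'))$ both map to a nonzero matrix $N$. Since $N\neq 0$, some column is nonzero, say the $j$-th, so the scalars $s_j,s_j'\in\{a,b,c\},\{a',b',c'\}$ are nonzero and $s_j v_j(w,x,y)=s_j' v_j(w',x',y')$; injectivity of $v_j$ gives $(w',x',y')=\lambda(w,x,y)$ with $\lambda=s_j/s_j'\neq 0$, and evaluating $w^2+3x^2+y^2=1$ on both points forces $\lambda^2=1$, hence $\lambda=\pm 1$. Feeding $(w',x',y')=\lambda(w,x,y)$ back into each column and using $v_k(\lambda w,\lambda x,\lambda y)=\lambda\,v_k(w,x,y)$ together with $v_k(w,x,y)\neq 0$, I obtain $s_k=\lambda s_k'$ for every $k$, i.e. $(a',b',c')=\lambda(a,b,c)$. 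Thus $q=p$ when $\lambda=1$ and $q=\sigma(p)$ when $\lambda=-1$, so the fibre over any $N\neq 0$ is exactly $\{p,\sigma(p)\}$, a set of size two. Note this handles the partial degeneracies uniformly: the cases $wxy=0$ need no separate treatment because only the non-vanishing of $(w,x,y)$ on the conic is used, and the cases where one or two of $a,b,c$ vanish need none either, since a single nonzero column already pins down the global sign $\lambda$.

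Finally I would isolate the one genuinely exceptional fibre. Because the $v_j$ do not vanish on the conic, $N=0$ forces $a=b=c=0$, and conversely $a=b=c=0$ (which is compatible with $abc-a-b-c=0$) gives $N=0$ for \emph{every} $(w,x,y)$ on the conic, so the fibre over the origin is a whole conic. This is precisely the locus $Q_0=0$ (here $Q_0=\operatorname{diag}(3a^2,3b^2,3c^2)$), whose representations factor through an $\SU$-subgroup by the remark following Lemma~\ref{lem:modSO4}; these belong to the small component $\SU_M\backslash G_2/\SO_{\HH}$ and are excised from the slice $X_0$ parametrising the large component, so the statement is understood over the complement of this $\SU$-locus, on which $\pi$ is everywhere exactly $2$-to-$1$. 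I expect the main obstacle to be organising precisely this degenerate behaviour rather than any single computation: one must see that a \emph{single} global sign $\lambda$ is forced simultaneously on all three columns — which is exactly what the non-vanishing of $(w,x,y)$ and of each $v_k$ supplies — and one must correctly identify the origin as the unique fibre where finiteness fails and recognise it as the $\SU$-locus, so that it is legitimately set aside.
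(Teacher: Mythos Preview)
Your argument is correct and is precisely the verification the paper's one-line proof omits: you identify the involution $\sigma$, check it is fixed-point-free on the conic, and use the injectivity and nonvanishing of the linear column maps $v_j$ to reconstruct any preimage up to the global sign $\lambda=\pm 1$. One minor correction to your final paragraph: the degenerate locus $a=b=c=0$ is \emph{not} in the small component $\SU_M\backslash G_2/\SO_{\HH}$, nor is it excised from $X_0$ --- the point $M=0$ lies in $X_0\subset A$ (indeed $\det Q=9$ there) and maps to the large component $X$, where the corresponding representation is simply reducible; the paper itself flags this exceptional fibre in Theorem~\ref{thm:genericfiniteall}(3) (``degree $2$ unless $a=b=c=0$''), so your reading of the lemma as generically $2$-to-$1$ is exactly right.
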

\begin{proof}
Clearly $((w,x,y),(a,b,c))$ and $((-w,-x,-y),(-a,-b,-c))$ map to the same point, and it is not hard to verify that these are the only identifications that take place under this map.
\end{proof}

\begin{lem}
  For $m\in W$ denote by $H_m = \{ h\in \SO_{\HH}\;|\; hm \in W \}$. Then for generic choices of $m$, the set $H_m$ is a subgroup that can be described explicitly as follows: set
  \begin{align*}
    H'& =  \left\langle \tfrac{1}{\sqrt{2}} + i\tfrac{1}{\sqrt{6}}, i\tfrac{1}{\sqrt{6}}+j\tfrac{1}{\sqrt{2}},-1\right\rangle,\\
    H''& =  \left\langle i\tfrac{1}{\sqrt{3}}, j,-1\right\rangle,
  \end{align*}
  which are groups of order $48$ and $8$, respectively. Then
  \[
H_m = \{(h_1,\pm h_1) \in \SO_{\HH} \mid h_1 \in H'\}
  \]
  is a group of order $48$ inside $\SO_{\HH} = \SL_2^2/\{\pm 1\}$. Moreover, the stabilizer in $H_m$ of $m$ is
  \[
\{(h_1,h_1) \in \SO_{\HH} \mid h_1 \in H''\},
\]
which is a subgroup of order $4$.
\end{lem}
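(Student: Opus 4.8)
The plan is to separate the two quaternion factors: the conjugation factor will be constrained through the invariant $Q_0$, while the remaining factor is then pinned down using that $\HH$ is a division algebra. Write an element of $\SO_\HH$ as the class of a pair $(h_1,h_2)\in\HH_1\times\HH_1$. The induced action on the chart is $m\mapsto N m\,\phi_{h_1}^{-1}$, where $\phi_{h_1}$ is the conjugation matrix of $h_1$ on $\HH_0$ in the basis $(i,j,ij)$ and $N$ is the matrix of $v\mapsto h_2 v\overline{h_1}$ on $\HH$. Since $N$ is an isometry of the norm form, $N^{T}I_4N=I_4$, and combined with the identity $I_3\,{}^{T}\!\phi_{h_1}^{-1}\,I_3^{-1}=\phi_{h_1}$ this gives the equivariance $Q_0(h\cdot m)=\phi_{h_1}\,Q_0(m)\,\phi_{h_1}^{-1}$. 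A direct computation gives $Q_0(m)=\diag(3a_2^2,3b_3^2,3c_4^2)$ for $m\in W$, which for generic $m$ has three distinct eigenvalues.

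First I would extract the necessary condition on $h_1$. If $h\cdot m\in W$ then $Q_0(h\cdot m)$ is again diagonal, so $\phi_{h_1}$ conjugates a diagonal matrix with distinct eigenvalues to a diagonal matrix; hence $\phi_{h_1}$ must carry each coordinate axis of $\HH_0$ to a coordinate axis, i.e. it is monomial. The monomial elements of $\SO(\HH_0)$ form a group of order $24$: its permutation part is the full symmetric group on the three axes, realized by the three transpositions exhibited in the proof of Lemma~\ref{lem:modSO4}, and its diagonal part is the Klein four-group of admissible sign changes. Pulling back along the two-to-one conjugation map $\HH_1\to\SO(\HH_0)$ shows that $h_1$ must lie in the preimage of this group, which is exactly $H'$ of order $48$.

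Next I would determine $h_2$. Membership $h\cdot m\in W$ is equivalent to the transformed map $F'=N\circ F_m\circ\phi_{h_1}^{-1}$ being axis-diagonal, where $F_m\colon\HH_0\to\HH$ is the linear map with matrix $m$, so $F_m(i)=a_2 i$, $F_m(j)=b_3 j$, $F_m(ij)=c_4\,ij$. Reading off the condition $F'(x)\in\RR x$ for a single basis vector $x$ gives $h_2\,\sigma^{-1}(x)\,\overline{h_1}\in\RR x$, where $\sigma$ is the permutation effected by $\phi_{h_1}$; but the monomiality of $\phi_{h_1}$ already gives $h_1\,\sigma^{-1}(x)\,\overline{h_1}\in\RR x$. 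Hence $h_2 w$ and $h_1 w$ lie on the same line for the nonzero quaternion $w=\sigma^{-1}(x)\overline{h_1}$, and since $\HH$ is a division algebra this forces $h_2\in\RR h_1$, whence $h_2=\pm h_1$ by the norm-one condition. Conversely, for any $h_1\in H'$ and $h_2=\pm h_1$ the same computation makes $F'$ axis-diagonal, and as $\SO_\HH$ preserves $A$ the result lies in $W$. This identifies $H_m=\{(h_1,\pm h_1)\mid h_1\in H'\}$; it is closed under multiplication because $H'$ is a group and the signs multiply, so it is a subgroup, and it has order $48$ once the identification $(h_1,h_1)\sim(-h_1,-h_1)$ in $\SO_\HH=\HH_1^2/\{\pm1\}$ is taken into account.

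Finally, for the stabilizer I would impose $F'=F_m$, i.e. $N\circ F_m=F_m\circ\phi_{h_1}$. Writing $h_2=\epsilon h_1$ with $\epsilon=\pm1$ and using that the monomial scalings cancel, this collapses to $\epsilon\,t_x=t_{\sigma(x)}$ for every axis $x$, where $(t_i,t_j,t_{ij})=(a_2,b_3,c_4)$. For generic $m$ these three scalars are distinct and free of sign coincidences, which forces $\epsilon=1$ and $\sigma=\id$; the latter says $\phi_{h_1}$ is diagonal, so $h_1$ lies in the preimage $H''$ of the Klein four-group, namely the quaternion group of order $8$. Thus $\mathrm{Stab}_{H_m}(m)=\{(h_1,h_1)\mid h_1\in H''\}$, of order $4$. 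I expect the determinacy of $h_2$ to be the crux: the argument must use the genericity of $Q_0(m)$ (distinct eigenvalues) to bound $h_1$, and separately the genericity of the triple $(a_2,b_3,c_4)$ to rule out the would-be extra symmetries on the non-generic locus, and it must apply the division-algebra step to a correctly chosen single direction so that the permutation $\sigma$ is handled uniformly rather than case-by-case.
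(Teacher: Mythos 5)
Your proof is correct, and its first half follows the same route as the paper: both arguments exploit the equivariance $Q_0(h\cdot m)=\phi_{h_1}\,Q_0(m)\,\phi_{h_1}^{-1}$ together with the genericity assumption that $Q_0(m)=\diag(3a_2^2,3b_3^2,3c_4^2)$ has distinct diagonal entries to force $\phi_{h_1}$ to preserve the set of coordinate axes, hence $h_1\in H'$. Where you genuinely diverge is in pinning down the second factor and the stabilizer. The paper falls back on computation at this point: it asserts that for each $h'\in H'$ one may take $h''=h'$, citing ``either the proof of Lemma \ref{lem:modSO4}, or a direct finite computation since $H'$ is a finite group,'' and the stabilizer is ``checked by enumerating the elements of $H'$.'' You instead derive $h_2=\pm h_1$ a priori, by comparing $h_2\,\sigma^{-1}(x)\,\overline{h_1}\in Kx$ (membership of $h\cdot m$ in $W$) with $h_1\,\sigma^{-1}(x)\,\overline{h_1}\in Kx$ (monomiality of $\phi_{h_1}$) and cancelling the quaternion $w=\sigma^{-1}(x)\overline{h_1}$; this treats all of $H'$ and every permutation $\sigma$ uniformly, with no case analysis. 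Likewise your stabilizer computation reduces, after the monomial scalings cancel, to the scalar equations $\epsilon\, t_{\sigma^{-1}(x)}=t_x$, which genericity (the scalars $a_2,b_3,c_4$ nonzero and distinct, as in Remark \ref{rmk:generi}) collapses to $\epsilon=1$ and $\sigma=\id$, i.e.\ $h_1\in H''$. What your route buys is an enumeration-free, conceptual proof that makes explicit which genericity hypothesis is used at each step; what the paper's route buys is brevity, since $H'$ is small enough that brute force is cheap. One small repair is needed in your write-up: the phrase ``since $\HH$ is a division algebra'' is literally true only over $\QQ$ or $\RR$, whereas the paper also works over fields where $\HH$ splits (e.g.\ algebraically closed fields, or $\FF_p$ with $p\geq 5$). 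What your cancellation actually requires is that $w=\sigma^{-1}(x)\overline{h_1}$ has norm $N(\sigma^{-1}(x))\in\{1,3\}$, which is nonzero in characteristic different from $2$ and $3$, so $w$ is invertible in $\HH$ over any such field; with that substitution (and the base field $K$ in place of $\RR$ throughout) your argument is valid in the paper's generality, including the final step $\lambda^2 N(h_1)=N(h_2)=1$ giving $h_2=\pm h_1$. Note also that in the necessity direction the condition $F'(x)\in Kx$ is vacuous at any axis where $t_{\sigma^{-1}(x)}=0$, so your ``correctly chosen single direction'' implicitly needs one axis with nonzero scalar, which the genericity condition $abc\neq 0$ supplies.
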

\begin{proof}  
  Generically we have that the diagonal elements of $Q_0$ are distinct, and none of them are $3$. It follows that any $h\in H_m$ acts to permute the diagonal of $Q_0$, and, consequently, permute the values $a$, $b$, and $c$ up to sign.
 
 The subgroup of $\SO_{\HH}$ which permutes the diagonal of $Q_0$ is given by $H'\times \HH_1$ where 
 \[  H' =  \left\langle \tfrac{1}{\sqrt{2}} + i\tfrac{1}{\sqrt{6}}, i\tfrac{1}{\sqrt{6}}+j\tfrac{1}{\sqrt{2}},-1\right\rangle.\]
This is a group of size $48$ isomorphic to $S_4\times C_2$, where the leftmost generator listed above represents a $4$-cycle, and the middle generator represents an element of order $2$. Mapping elements $h \in H'$ to their representing matrix $\phi_h$ defines a surjective homomorphism $H' \to S_4$ with kernel $C_2 = \{\pm 1\}$, and then this group maps surjectively onto $S_3$ via its (generic) permutation action on the elements of $Q_0$.
 
 Either from the proof of Lemma \ref{lem:modSO4}, or by a direct finite computation since $H'$ is a finite group, one sees for each $h' \in H'$ that there exists an $h''\in \HH_1$ such that  $(h',h'')\in H_m$. In fact, according to our conventions for the various actions, one can take $h''= h'$.  Moreover, it is clear that $h''$ is unique up to sign, noting also that $(-1,-1) = (1,1) \in \HH_1 \times \HH_1/(\pm 1) = \SO_{\HH}$. This concludes the proof of our description for $H_m$, and then one sees from this description that $H_m$ is in fact a subgroup of $\SO_{\HH}$.

Finally, the claim about the stabilizer of $m$ in $H_m$ can simply be checked by enumerating the elements of $H'$ and checking.
\end{proof}

\begin{rmk}\label{rmk:generi}
For the purpose of Lemma \ref{lem:X0} the generic condition on $m$ is that $a^2$, $b^2$, and $c^2$ are distinct, which, in consideration of the open condition, is equivalent to $abc\neq 0$ and $a$, $b$, and $c$ being distinct.
\end{rmk}

In order to augment the preceding discussion to understand the fibers of the map $f|_S: S \rightarrow Y$, which will be generically finite, we shall use the following. 

\begin{lem}
Let $(1,h),(1,h') \in \SL_2^{\llong}$ and $m,m' \in W$ with $m$ generic as in Remark \ref{rmk:generi}. Let $(1,h)m$ and $(1,h')m'$ respectively denote the corresponding points in $A$. Suppose that
$(g,1)\in \SL_2^{\rm short}$ satisfies $(g, 1)(1,h) m  = (1,h')m'$. Then
\begin{enumerate}
\item $g\in H'$.
\item $m'=(g,g) m$, and
\item $ h= \pm h'g $.
\end{enumerate}
\end{lem}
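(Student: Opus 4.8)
The plan is to convert the hypothesis into a single statement about the left action of one element of $\SO_{\HH}$ on the generic point $m$, and then to read off all three conclusions from the preceding lemma describing $H_m$. First I would record how the two subgroups act on the chart $A$: an element $(g,1) \in \SL_2^{\short}$ acts by $M \mapsto R_{\bar g}M\phi_g^{-1}$, where $R_{\bar g}$ is right multiplication by $\bar g$ on the $\HH$-rows and $\phi_g$ is the conjugation twist on the $\HH_0$-columns, whereas $(1,h) \in \SL_2^{\llong}$ acts by $M \mapsto L_h M$ with no twist at all, since its first $\HH_1$-coordinate is trivial. The key compatibility is that the action of $(g,1)$ viewed inside $\GL_2^{\short}$ coincides with its action as left-translation by the $\SO_{\HH}$-element $(g,1)$, because the two subgroups share the same first $\HH_1$-factor; this lets me treat $(g,1)$ and $(1,h)$ as commuting members of $\SO_{\HH}$ and compose their left-actions as a genuine group action on the chart.

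With that in hand the reduction is purely formal. Since $(g,1)(1,h) = (g,h)$ in $\SO_{\HH} = \HH_1 \times \HH_1/\{\pm 1\}$ and left-translation is a group action, the hypothesis $(g,1)(1,h)m = (1,h')m'$ becomes $(g,h)m = (1,h')m'$; applying $(1,h'^{-1})$ to both sides collapses this to
\[
 (g,\, h'^{-1}h)\, m = m'.
\]
Because $m' \in W$ by hypothesis and $m$ is generic, the element $s \df (g,\, h'^{-1}h)$ therefore lies in $H_m = \{\, h \in \SO_{\HH} \mid hm \in W \,\}$.

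Now I would invoke the explicit determination $H_m = \{(h_1, \pm h_1) \mid h_1 \in H'\}$ from the preceding lemma. Reading off the first coordinate of $s$ gives $g \in H'$, which is claim (1). The structural shape $(h_1, \pm h_1)$ then forces the second coordinate to satisfy $h'^{-1}h = \pm g$, i.e.\ $h = \pm h'g$, which is claim (3). Substituting back, $m' = sm = (g, \pm g)m$, and in the sign $+$ case this is precisely claim (2), $m' = (g,g)m$.

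The one genuinely delicate point, and where I expect to spend the most care, is the bookkeeping of this last sign. In $\SO_{\HH} = \HH_1^2/\{\pm 1\}$ the elements $(g,g)$ and $(g,-g) = (-g,g)$ are distinct and act on chart matrices by $M \mapsto L_g R_{\bar g}M\phi_g^{-1}$ and by its negative, so $(g,-g)m = -(g,g)m$; both of these are staircase matrices in $W$, since the defining relation $abc - a - b - c = 0$ is preserved under global negation, so a priori either sign can occur. The correct reading is that the sign in claim (2) is slaved to the sign in claim (3): $h = h'g$ gives $s = (g,g)$ and $m' = (g,g)m$, whereas $h = -h'g$ gives $s = (g,-g)$ and $m' = -(g,g)m$. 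As a consistency check, and an independent route to claim (1), I would compute $Q_0(m) = 3\,\mathrm{diag}(a^2,b^2,c^2)$ and use the $\SL_2^{\short}$-equivariance $Q_0 \mapsto \phi_g Q_0 \phi_g^{-1}$ together with the $(1,h)$-invariance of $Q_0$ to see directly that $\phi_g$ conjugates one distinct-eigenvalue diagonal matrix to another, hence is a signed permutation matrix, which is exactly the condition cutting out $\phi(H')$ inside $\SO_3$.
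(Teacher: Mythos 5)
Your argument is correct and is essentially the paper's own proof: both reduce the hypothesis to the membership $(g,\,h'^{-1}h) = (1,h'^{-1})(g,1)(1,h)\in H_m$ and then read off conclusions (1) and (3) from the explicit description $H_m=\{(h_1,\pm h_1)\mid h_1\in H'\}$ established in the preceding lemma. If anything you are more careful than the paper, whose proof never explicitly addresses claim (2) and whose statement $m'=(g,g)m$ should indeed be read as $m'=(g,\pm g)m$ with the sign slaved to that in (3), exactly as your bookkeeping shows; the $Q_0$-equivariance computation at the end is a correct, though redundant, cross-check of claim (1).
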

\begin{proof}
Firstly, as $(g,1)(1,h)m  = (1,h')m'$ we have 
\[  (1,h'^{-1})(g,1)(1,h) \in H_m. \]
Thus we may write 
\[   ({h}_1,\pm {h}_1) =  (1,h'^{-1})(g,1)(1,h) \]
where $h_1\in H'$.

It follows that $g=h_1 \in H'$. Hence, $ (g,\pm g) = (g, {h'}^{-1}h).$
so that $h = \pm h' g.$
\end{proof}

\begin{rmk}
The points $(1,h)m$ and $(1,h')m'$ need not be in $X_0$ because $h$ and $h'$ may include the variable $z$.
\end{rmk}

\begin{lem}\label{lem:Ghm}
  For $(1,h)m\in X_0$ with $h=w+xi+yj\in  \SL_2^{\llong}$ and $m\in W$ generic as in Remark \ref{rmk:generi}, let
  \[G_{(1,h)m} = \{ (g,u)\in \GL_2^{\rm short} \;|\; (g,u)(1,h)m \in X_0 \}.\]
    Then the following hold:
\begin{enumerate}
\item for $(g,u) \in G_{(1,h)m}$ we have $g \in H'$;
\item $u$ is determined, up to multiplication by $\pm 1$, by $h$ and $g$, as in the proof below.  
\end{enumerate}
In particular, for generic choices of $m$, the set $G_{(1,h)m}$ is of size $48$.

Moreover, for generic choices of $(1,h)$ the stabilizer of $(1,h)m$ in $G_{(1,h)m}$ is trivial.
\end{lem}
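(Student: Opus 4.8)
The plan is to push the whole question into the preceding lemma on $\SL_2^{\short}$-translates of slice points, by absorbing the torus coordinate $u$ into the long-root factor. Inside $\SO_{\HH}=(\HH_1\times\HH_1)/\{\pm 1\}$, for $(g,u)\in\GL_2^{\short}=(\HH_1\times\U_1)/\{\pm 1\}$ one has
\[
(g,u)(1,h)=(g,uh)=(g,1)(1,uh),
\]
with $(g,1)\in\SL_2^{\short}$ and $(1,uh)\in\SL_2^{\llong}$. Hence if $(g,u)(1,h)m=(1,h')m'\in X_0$, the preceding lemma---applied with its $h$ replaced by $uh$, and $m$ generic as in Remark~\ref{rmk:generi}---applies verbatim and yields claim (1), namely $g\in H'$, together with $m'=(g,g)m$ and $uh=\pm h'g$. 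This is the one structural input, and it requires nothing beyond the displayed identity.

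For claim (2), recall that a point of $X_0$ is exactly one of the form $(1,h')m'$ with $h'=w'+x'i+y'j$, i.e. with vanishing $ij$-component (Lemmas~\ref{lem: gl2reps} and~\ref{lem:X0}). Writing $h'=\pm uhg^{-1}$ and $v\df hg^{-1}=v_0+v_1i+v_2j+v_3ij$, the condition $h'\in{\rm Span}(1,i,j)$ is the single linear equation that the $ij$-component of $u\cdot v$ vanish; expanding $u=e+fi$ and using $i^2=-3$ gives $e v_3+f v_2=0$. A line through the origin meets the ellipse $e^2+3f^2=1$ (the norm-one locus for $u\in\U_1$) in an antipodal pair, so $u=\pm u_0$ is pinned down up to sign, provided $(v_2,v_3)\neq(0,0)$, i.e. $hg^{-1}\notin{\rm Span}(1,i)$. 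For generic $h$ this holds for all $g\in H'$ simultaneously, because $\bigcup_{g\in H'}\U_1 g$ is a finite union of curves inside the surface $\HH_1\cap{\rm Span}(1,i,j)$.

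To reach the count $48$ I would count admissible pairs and then descend through the quotient. There are $|H'|=48$ admissible $g$ and, for each, two admissible $u$, giving $96$ pairs $(g,u)\in\HH_1\times\U_1$. The equation $e v_3+f v_2=0$ is unchanged under $g\mapsto -g$ (which merely negates $v$, and $-1\in H'$), so this set of $96$ pairs is stable under the fixed-point-free involution $(g,u)\mapsto(-g,-u)$; passing to $\GL_2^{\short}=(\HH_1\times\U_1)/\{\pm 1\}$ therefore identifies them exactly two-to-one, giving $96/2=48$ elements of $G_{(1,h)m}$. I expect this interplay of two competing factors of $2$---one manufacturing solutions, one collapsing them---to be the most error-prone bookkeeping, and I would sanity-check it against the Galois description in Theorem~\ref{t:main1}.

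For the moreover clause I would intersect $G_{(1,h)m}$ with the fixed-point condition $(g,u)(1,h)m=(1,h)m$. Using $m'=(g,g)m$, $h'=\pm uhg^{-1}$, and that $\widetilde X_0\to X_0$ is two-to-one with deck involution the simultaneous sign change (Lemma~\ref{lem:X0}), the equality $(1,h')m'=(1,h)m$ forces $(h',m')=(h,m)$ or $(h',m')=(-h,-m)$. In either case the $m'$-equation $(g,g)m=\pm m$ forces $(g,g)$ into the order-$4$ stabilizer of $m$ computed above, or into the coset of elements realizing $-m$; in particular $g$ is central only when $g=\pm1$, which realizes $+m$. For $g=\pm 1$ one checks directly that only the identity fixes $(1,h)m$---e.g. $(1,-1)$ sends it to $(1,-h)m$, a different point of $X_0$ unless $w=x=y=0$---while for every \emph{noncentral} candidate the surviving equation reduces to $hgh^{-1}\in{\rm Span}(1,i)$, a proper Zariski-closed condition on $h$ since $g$ is not central in $\HH_1$ (it already fails at $h=1$). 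As only finitely many $g$ occur, their bad loci form a finite union of proper subvarieties, and a generic $(1,h)$ avoids all of them, leaving only the identity. The main obstacle here is exactly this genericity bookkeeping---confirming that every nontrivial candidate imposes a nonempty proper condition on $h$---which is where one needs ``$(1,h)$ generic'' rather than merely ``$m$ generic''.
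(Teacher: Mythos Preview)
Your proof is essentially correct and follows the same approach as the paper. For claim (1) you absorb $u$ into the long-root factor and invoke the preceding lemma with $h$ replaced by $uh$; the paper redoes that argument inline but to the same effect. For claim (2) your vanishing-$ij$-component computation via $v=hg^{-1}$ is the mirror image of the paper's computation via $gh^{-1}$; both yield $u$ determined up to sign. Your $96/2=48$ count is more explicit than the paper's but amounts to the same thing.

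There is one small slip in your stabilizer argument. You assert that for every noncentral candidate $g$ the condition $hgh^{-1}\in\mathrm{Span}(1,i)$ ``already fails at $h=1$''. This is false: the element $g=i/\sqrt{3}\in H''$ is noncentral in $\HH_1$ yet lies in $\U_1$, so at $h=1$ one has $hgh^{-1}=g\in\U_1$. The conclusion you want---that the locus $\{h:hgh^{-1}\in\U_1\}$ is proper Zariski-closed in the slice $\{w+xi+yj\}\cap\HH_1$---is still true for each noncentral $g$ (for $g=i/\sqrt{3}$ it is $y=0$), but your witness point does not work uniformly. The paper avoids this by a more direct route: it does not pass through the $2$-to-$1$ cover at all, but simply observes that $(g,u)(1,h)m=(1,h)m$ gives $(g,h^{-1}uh)m=m$, forcing $(g,h^{-1}uh)$ into the order-$4$ stabilizer $\{(h_1,h_1):h_1\in H''\}$ of $m$ in $H_m$; hence $g\in H''$ and $u=hgh^{-1}$, and for generic $h$ the constraint $u\in\U_1$ forces $g=\pm 1$. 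Remark \ref{rmk:generih} then pins down the nongeneric locus as $wxy=0$. Your detour through the deck involution is valid but introduces an unnecessary case split $(h',m')=\pm(h,m)$ that the paper's direct approach sidesteps.
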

\begin{proof}
For $(g,u)\in G_{(1,h)m}$  write $(g,u) (1,h)m = (1,h')m'$ with $h' = w'+x'i+y'j\in \SL_2^{\llong}$.

Now we see that 
\[ (1,(h')^{-1})(g,u) (1,h)m = (g,(h')^{-1}uh)= m'. \]
It follows that $ (g,(h')^{-1}uh) \in H_m$ and hence $g\in H'$ and
\[ g = \pm(h')^{-1}uh \]
so that
\[ \pm(h')^{-1} u = g h^{-1}. \]
Write $ g h^{-1} =  w_1 + x_1i + y_1j + z_1ij$ and $u=u_1+u_2i$ and we see
\[ \pm y'j (u_1+u_2i) =  y_1j + z_1ij \]
so that generically we have
$(u_1,u_2) = \pm (y_1,z_1)/\sqrt{y_1^2+3z_1^2}$.

Now, if in the above we had
\[ (g,u)(1,h)m = (1,h)m \]
then 
\[ (g,h^{-1} u h) m = m \]
hence $g \in H''$ and $u= hgh^{-1} \in H''$. For generic $h$ the condition $u\in \U_1$ implies $g=\pm 1$, but this gives $(g,u) = \pm(1,1)$ which is trivial in $\SO_\HH$.
%
%Now we have as above $m' = \hat{h}m$ where $\hat{h} = (h_1,h_1)\in H_m$.
%
%\textcolor{red}{A: might be $(h_1,\pm h_1)\in H_m$ above, don't think it would matter.\\
%Think it is a bit off below also, $g_1h_1^{-1} \in H''$ which implies $g_1\in H'$. \\
%I don't think this effects result... requires clearer proof though.
%}
%
%Then $ghm = h'\hat{h}m$ where $gh=(g_1,uh)$ and $h'\hat{h} = (h_1,h'h_1)$ and so we see generically $g_1=\pm h_1$, and so without loss of generality take $g_1=h_1$.
%Then generically, provided $m\neq 0$, we also have $uh = h'h_1$, and thus 
%$h'=uhh_1^{-1}$.
%
%We see that $h'/u$ is determined up to $\pm$ by $h$ and $h_1$ $hh_1^{-1} = w_1 + x_1i + y_1j + z_1ij$ then $w_1,x_1$ determine $u$, that is $u=(w_1 - ix_1)/\sqrt{(w_1^2 + 3x_1^2)}$. This concludes the proof.
\end{proof}

\begin{rmk}\label{rmk:generih}
For the purpose of Lemma \ref{lem:Ghm} the generic condition on $(1,h)$ is that it does not conjugate elements of $H''$ into $\U_1$.
We see that this is equivalent to $wxy\neq 0$, indeed, if in the above we had
\[ (g,u)(1,h)m = (1,h')m \]
then 
\[ (g,(h')^{-1} u h) m = m \]
hence $g = (h')^{-1} u h \in H''$ and $uh= h'g$.
Given $h' = w'+x'i+y'j$ and $h=w+xi+yj$ and by checking cases for $g\in H''$ we see that $u=\pm g$ and hence.
 $h'=\pm gh'g^{-1}$
It follows that we are specifically identify all of 
\[ \pm (w+xi+yj)m, \pm (w-xi+yj)m, \pm (w+xi-yj)m, \pm (w-xi-yj)m. \]
By the orbit stabilizer theorem we obtain a larger stabilizer if and only if $wxy=0$.
\end{rmk}

\begin{lem}\label{lem:genericfinitepart}
  The composite map
  \[
    W \rightarrow ({\HH}_1\times{\HH}_1) \backslash A \to \bA^2
  \]
 given by the degree $2$-invariants $(t,u)$ from Section \ref{ss:invariants} is a finite map of degree $12$ with Zariski dense image. The second map in this composition is an isomorhpism on a Zariski dense subset.
\end{lem}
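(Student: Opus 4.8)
The plan is to study the composite by factoring it as
\[
W \longrightarrow (\HH_1\times\HH_1)\backslash A = \SO_\HH\backslash A \longrightarrow \bA^2,
\]
computing the degree of each factor separately; the tower law for the function-field extensions then pins down the degree of the second map. Throughout I would assume $\chara K\neq 2,3$, as elsewhere. Write $a=a_2$, $b=b_3$, $c=c_4$ for the three nonzero coordinates of a point of $W$, so that $W=\{(a,b,c)\mid abc-a-b-c=0\}$ is an irreducible affine surface.

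First I would evaluate the invariants on $W$. Substituting the representative from Lemma \ref{lem:modSO4}(3) into the degree-$2$ invariants of Section \ref{ss:invariants} gives the clean formulas
\[
t=-3(a^2+b^2+c^2),\qquad u=-3(ab+bc+ca),
\]
so that, writing $e_1,e_2,e_3$ for the elementary symmetric functions of $a,b,c$, we have $t=-3(e_1^2-2e_2)$ and $u=-3e_2$, while the defining equation of $W$ reads $e_3=e_1$. Two consequences follow. First, the order-$12$ group $G=S_3\times\{\pm1\}$ acting by permutations of $(a,b,c)$ and by the global sign change $(a,b,c)\mapsto(-a,-b,-c)$ preserves $W$ and fixes $t$ and $u$, and $K[t,u]=K[e_1^2,e_2]$. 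Finiteness of $W\to\bA^2$ then comes from a tower of module-finite extensions: $K[W]=K[a,b,c]/(e_3-e_1)$ is free of rank $6$ over $K[e_1,e_2]$ (base change of the $S_3$-cover), which is free of rank $2$ over $K[e_1^2,e_2]=K[t,u]$, giving rank $12$ overall. Second, the image is dominant, hence dense: a generic $(t,u)$ is hit by solving $e_2=-u/3$ and $e_1^2=-(t+2u)/3$, which yields two values of $e_1$, and for each the relation $e_3=e_1$ together with $e_2$ determines a monic cubic whose three roots, read in all $6$ orders, recover $(a,b,c)$. Generically these roots are distinct and nonzero, so $G$ acts freely and the $12$ points of the fiber form a single free $G$-orbit; thus $W\to\bA^2$ is finite of degree $12$.

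Next I would check that $t$ and $u$ descend to $\SO_\HH\backslash A$, so that the second map is defined. Since $\SO_\HH$ acts on $Q_0$ by conjugation (the equivariance recalled just before Lemma \ref{lem:modSO4}), the coefficients of the characteristic polynomial of $Q_0$ are $\SO_\HH$-invariant; in particular so is $t$, the coefficient of $x^2$. Using the identity $s=t-2u$ from Section \ref{ss:open}, in which $9s$ is the constant coefficient, we also get that $u=(t-s)/2$ is $\SO_\HH$-invariant.

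Finally I would split the degree. By Lemma \ref{lem:modSO4}(3) every point of $A$ with $Q_0\neq 3\,\Id_3$ has a representative in $W$, so $W\to\SO_\HH\backslash A$ is dominant; and by the earlier lemma computing $H_m$, a generic $\SO_\HH$-orbit meets $W$ in exactly $|H_m|/|\mathrm{Stab}_{H_m}(m)|=48/4=12$ points, so this first map is generically $12$-to-$1$. The tower law then gives $12=\deg(W\to\bA^2)=\deg(W\to\SO_\HH\backslash A)\cdot\deg(\SO_\HH\backslash A\to\bA^2)=12\cdot\deg(\SO_\HH\backslash A\to\bA^2)$, forcing the second map to have degree $1$; in characteristic zero a dominant degree-one morphism of integral varieties is birational, hence an isomorphism over dense open subsets, which is the final assertion. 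I expect the main obstacle to be precisely this bookkeeping: one must be sure the $12$ points of a generic fiber of $W\to\bA^2$ constitute a single $\SO_\HH$-orbit meeting $W$, i.e.\ that the degrees split as $12\times1$ rather than $6\times2$. This is exactly what the count $|H_m|=48$, $|\mathrm{Stab}_{H_m}(m)|=4$ guarantees, and it reflects the fact that among signed permutations only the global sign change preserves the equation $abc=a+b+c$ cutting out $W$.
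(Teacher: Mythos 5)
Your argument is correct, but it reaches the degree count by a genuinely different route than the paper, so let me compare. The paper's proof is elimination-theoretic: from the ideal $(abc-a-b-c,\; t+3(a^2+b^2+c^2),\; u+3(ab+ac+bc))$ it extracts the sextic $9c^6+3tc^4+(u^2+6t+12u)c^2+3t+6u$ in $c$ over $K[t,u]$, so a generic $(t,u)$ admits six values of $c$, each yielding two values of $b$ from a quadratic and a unique $a=(b+c)/(bc-1)$, and it then verifies that all twelve candidate points genuinely lie in the fiber. Your symmetric-function argument replaces this with a structural statement: $K[W]$ is finite free of rank $6\times 2=12$ over $K[t,u]=K[e_1^2,e_2]$, with the generic fiber a single free orbit of $S_3\times\{\pm 1\}$. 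This buys honest finiteness (flatness, in fact) rather than generic fiber-counting, plus a clean identification of the Galois group of the cover; what it loses is the explicit degenerate locus $(t+2u)(u+3)(-tu^2+2u^3-12t^2+6tu+60u^2+243t+486u)=0$, which the paper's elimination produces as a by-product and then reuses in part (1) of Theorem \ref{thm:genericfiniteall} and in Remark \ref{rmk:nondegen}; to recover it your way you would still need a discriminant computation. For the lemma's final sentence your treatment is actually more complete than the printed proof, which stops at the twelve-count for the composite and never explicitly addresses the claim that the second map is generically an isomorphism: your splitting $12 = 12\cdot 1$, using the unlabelled lemma on $H_m$ following Lemma \ref{lem:X0} to see that a generic $\SO_\HH$-orbit meets $W$ in $48/4=12$ points, supplies exactly the implicit step, and your observation that $u=(t-s)/2$ is $\SO_\HH$-invariant on $A$ (via the identity $s=t-2u$, valid modulo the closed conditions) correctly justifies that $(t,u)$ descends past the $\GL_2^{\short}$-invariance to the full $\SO_\HH$-quotient. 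Two small repairs: the paper's $W$ carries the open condition $u\neq -3$ inherited from $\det Q\neq 0$, so the map is finite of degree $12$ onto the basic open set $\{u\neq -3\}$ (whence the phrase ``Zariski dense image'') rather than onto all of $\bA^2$ --- your freeness argument restricts along this basic open without change since $u+3$ is pulled back from the base; and your appeal to characteristic zero in the last step is unnecessary (and undesirable, since the paper also works over $\FF_p$ for $p\geq 5$): a dominant map of integral varieties inducing a degree-one extension of function fields is birational in any characteristic.
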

\begin{proof}
  We can identify $W$ with the affine scheme
  \[
W = \{(a,b,c)\;|\; abc-a-b-c=0,~ u\neq -3\}.
\]

On this subscheme we have that
\begin{align*}
  t(a,b,c) &= -3(a^2+b^2+c^2),\\
  u(a,b,c) &= -3(ab+ac+bc).
\end{align*}
That is, this map is given by symmetric polynomials. Sage outputs that the scheme-theoretic image of this map is the entire affine plane. But I think this may simply be returning the closure of the set-theoretic image. For example, if we include the open condition $u\neq -3$ then obviously we lose surjectivity and the image is the basic affine open described by this condition. But Sage still says this is surjective. So probably as long as we don't care what the Zariski dense set-theoretic image is, only that it is Zariski dense, this Sage computation is enough.

Consider the ideal in $\QQ[a,b,c,t,u]$ defined as
\[I=(abc-a-b-c,t+3(a^2+b^2+c^2),u+3(ab+ac+bc)).\]
Eliminating $a$ and $b$ from this ideal produces the principal ideal defined by
\[
9c^6+3tc^4+(u^2+6t+12u)c^2+3t+6u \quad \iff \quad t = -\frac{9c^6+u(u+12)c^2+6u}{3(c^2+1)^2}
\]
Therefore, given each pair $(t,u) \in \bA^2$, we find that there are generically $6$ possible choices for $c$ in the preimage. There are fewer than $6$ choices for $c$ precisely if
\[
(t+2u)(u+3)(-tu^2+2u^3-12t^2+6tu+60u^2+243t+486u)=0.
\]
Since our equations are symmetric in $a$, $b$ and $c$, the same holds if we eliminate any other pair of variables.

Suppose that we avoid the degenerate locus of points $(t,u)$ described by the vanishing above, and choose a triple $(a,b,c)$ in the preimage. We have seen that there are $6$ possibilities for $c$. If $bc=1$ then the closed condition simplifies to $b+c=0$ and we find that $u=-3$. Therefore, this does not occur for any of the points $(a,b,c)$ above $(t,u)$ in the nondegenerate locus. It follows that we can use the closed condition to solve for $a=\frac{b+c}{bc-1}$ above the nondegenerate locus. One observes that then
\begin{align*}
  t &= -3\frac{c^2b^4-2cb^3+(c^4+2)b^2+2(1-c^2)cb+2c^2}{(bc-1)^2},\\
  u &= -3\frac{(c^2+1)b^2+cb+c^2}{bc-1}.
\end{align*}
For each of the $6$ different $c$ values that can lie above $(t,u)$, if $c^2+1\neq 0$ then the expression for $u$ above shows that there are generically $2$ choices for $b$. On the other hand, if $c^2+1=0$, in fact $u=-3$ so we are in the degenate locus and we can ignore this case. It follows that the fibers of the map under study are generically of size at most $12$.

Now we should show that the maximum size of $12$ is actually obtained generically. Therefore, given $(u,t)$ in the nondegenerate locus, choose a $c$ value satisfying the degree $6$ polynomial above with coefficients in $\QQ[u,t]$. We then solve for $b$ as a root of the quadratic polynomial
\[
(3c^2+3)b^2+(cu+3c)b+3c^2-u,
\]
so that
\[
b= \frac{-cu-3c \pm \sqrt{-36c^4+c^2u^2+18c^2u-27c^2+12u}}{6(c^2+1)}
\]
This allows us to express $a$ and $b$ in terms of $c$ and $u$ alone. We have seen above that since $c^2+1 \neq 0$ in the nondegnerate locus, we can also solve for $t$ in terms of $c$ and $u$ alone. Doing so, one can check say on a computer that $-3(a^2+b^2+c^2)$ is indeed equal to the expression for $t$ in terms of $c$ and $u$. Therefore, each of the six choices of $c$ leads, generically, to two points $(a,b,c)$ above $(u,t)$, so that fiber size is indeed generically equal to $12$.
\end{proof}

\begin{rmk}
  \label{rmk:nondegen}
  In terms of the parameters $a,b,c$ we have $t+2u = (a+b+c)^2$. Therefore, in the nondegeneracy locus above we have $a+b+c \neq 0$ and, thanks to the closed condition, this is equivalent to assuming that all three of $a$, $b$ and $c$ are nonzero. The interesting cubic factor
  \[-tu^2+2u^3-12t^2+6tu+60u^2+243t+486u\]
  is a singular curve of genus $0$, where the singular point is $(t,u) = (-27,-27)$, which corresponds to $(a,b,c) = (\pm\sqrt{3},\pm\sqrt{3},\pm\sqrt{3})$. Modulo the closed condition and expressed in terms of $a,b,c$, this cubic equation nonvanishing is equivalent to no two of $a,b,c$ being equal. Thus, in the nondegeneracy locus, we can assume all of $a,b,c$ are nonzero and pairwise distinct.
\end{rmk}

\begin{thm}\label{thm:genericfiniteall}
The composite map 
\[ f\colon \widetilde{X}_0 \stackrel{f_1}{\rightarrow} X_0 \stackrel{f_2}{\rightarrow}Z \stackrel{f_3}{\rightarrow} Y \]
given by
\[ ((w,x,y),(a,b,c)) \mapsto  (t,u,D_7,D_8,D_{11}). \]
satisfies:

\begin{enumerate}
\item $f$ has Zariski dense image in $Y$, indeed the image includes the complement of 
\[ t+2u=0 \quad\text{and}\quad -tu^2+2u^3-12t^2+6tu+60u^2+243t+486u = 0. \]
\item $f$ is finite away from:
  \begin{itemize}
  \item $abc(a-b)(a-c)(b-c)=0$,
  \item $y=0$,
  \item $w=x=0$,
  \end{itemize}
  and it is furthermore unramified of degree $192$ if additionally $wx\neq 0$.
  
\item As a consequence of Lemma  \ref{lem:X0}, the map $f_1$ has generic degree $2$. Indeed, it has degree $2$ unless $a=b=c=0$.

\item As a consequence of Lemma \ref{lem:Ghm}, the map $f_2$ has generic degree $48$. 
Indeed it has degree $48$ provided $abc(a-b)(a-c)(b-c)wxy \neq 0$.

\item As a consequence of the above, the map $f_3$ has generic degree $2$.
\end{enumerate}
\end{thm}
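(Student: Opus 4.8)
The plan is to compute the generic degree of each factor $f_1$, $f_2$, $f_3$ separately and then combine them through multiplicativity of degrees for dominant generically finite morphisms of irreducible varieties, i.e. $\deg f=\deg f_1\cdot\deg f_2\cdot\deg f_3$. Two of the three factor degrees are already available. Part (3) is a direct reading of Lemma~\ref{lem:X0}: the fibre of $f_1$ is the two-element set $\{((w,x,y),(a,b,c)),((-w,-x,-y),(-a,-b,-c))\}$, which degenerates to a single point only when $a=b=c=0$. Part (4) follows from Lemma~\ref{lem:Ghm}: since $f_2$ is the restriction to the slice $X_0$ of the quotient map $A\to Z=\GL_2^{\short}\backslash A$, its fibre over the class of $(1,h)m$ is precisely $G_{(1,h)m}$, which has order $48$ and trivial stabiliser as soon as $abc(a-b)(a-c)(b-c)wxy\neq 0$ (Remarks~\ref{rmk:generi} and~\ref{rmk:generih}). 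Hence $\deg(f_2\circ f_1)=96$ off these divisors.

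Next I would settle part (1). Because $t$ and $u$ factor through $(a,b,c)$ via $t=-3(a^2+b^2+c^2)$ and $u=-3(ab+ac+bc)$, Lemma~\ref{lem:genericfinitepart} already shows that, in the $(t,u)$-coordinates, the image of $f$ covers the complement of $t+2u=0$ and of the cubic $-tu^2+2u^3-12t^2+6tu+60u^2+243t+486u=0$, the remaining factor $u+3$ being invertible on $Y$. Over such a pair $(t,u)$ the same lemma produces exactly $12$ triples $(a,b,c)$, and for each of these the three quartic invariants $(D_7,D_8,D_{11})$ give a map from the quadric surface $w^2+3x^2+y^2=1$ to the fibre surface $\{P(t,u,\cdot,\cdot,\cdot)=0\}$; a dimension count shows this map is dominant, so every generic point of $Y$ above the stated locus is attained, which is part (1).

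The heart of the matter is part (2). Finiteness away from the listed divisors is the easy half: by part (1) the map $f$ is a dominant morphism of irreducible fourfolds, hence generically finite, and the excluded loci are exactly where the two slice constructions break down, namely where the diagonal invariants collide (Remark~\ref{rmk:nondegen}, giving $abc(a-b)(a-c)(b-c)=0$) and where the parametrisation $\{w^2+3x^2+y^2=1\}\to\HH_1/\U_1$ of Lemma~\ref{lem: gl2reps} ceases to be a local isomorphism (giving $y=0$ and $w=x=0$). The real content is the equality $\deg f=192$, together with unramifiedness on $wx\neq 0$. Continuing the fibre analysis of the previous paragraph, for each of the $12$ triples $(a,b,c)$ I would show the system $(D_7,D_8,D_{11})=\mathrm{const}$ meets the quadric $w^2+3x^2+y^2=1$ in exactly $16$ reduced points, so that the generic fibre of $f$ has $12\cdot 16=192$ points; unramifiedness on $wx\neq 0$ then follows either from nonvanishing of the Jacobian of $f$ there, or, more structurally, from the observation that each of $f_1$, $f_2$, $f_3$ is \'etale once $wxy\neq 0$ and $abc(a-b)(a-c)(b-c)\neq 0$.

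Granting part (2), part (5) is immediate from multiplicativity, $\deg f_3=\deg f/(\deg f_1\cdot\deg f_2)=192/96=2$. The step I expect to be the main obstacle is precisely the $16$-point count feeding part (2): it is an elimination problem for the three degree-four invariants $D_7,D_8,D_{11}$ restricted to the quadric, and the difficulty is to organise it so that one sees simultaneously that the generic fibre is reduced of size $16$ and that ramification occurs only on $wx=0$. A more conceptual route I would attempt first, which reverses the logical order, is to exhibit the deck involution of $f_3$ outright: the map $(w,x,y)\mapsto(-w,-x,y)$ fixes $t$ and $u$, and since one verifies that $D_7$, $D_8$ and $D_{11}$ depend on $w$ and $x$ only through $w^2$, $x^2$ and $wx$, it fixes those invariants as well. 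Showing that this involution accounts for the entire generic fibre of $f_3$ would give $\deg f_3=2$ directly and then $\deg f=192$ by multiplication, trading the resultant computation for the verification that the invariants exhibit no further coincidences.
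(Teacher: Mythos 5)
Your overall bookkeeping agrees with the paper's proof: parts (3) and (4) read off Lemmas \ref{lem:X0} and \ref{lem:Ghm} exactly as the paper does, the multiplicativity $\deg f=\deg f_1\cdot\deg f_2\cdot\deg f_3$ is the same device, and your $12\times 16=192$ is precisely the paper's fibre structure ($12$ from Lemma \ref{lem:genericfinitepart}, $16$ from the $(w,x,y)$-directions). The problem is that the one fact on which everything hinges --- that for fixed generic $(a,b,c)$ the system $(D_7,D_8,D_{11})=\mathrm{const}$ meets the quadric $w^2+3x^2+y^2=1$ in exactly $16$ points --- is never proved; you flag it yourself as the main obstacle. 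The paper settles it with a short explicit computation rather than a resultant: setting $w_0=w^2$, $x_0=3x^2$, $y_0=y^2$ and $\delta_j=D_j((w,x,y),(a,b,c))-D_j((1,0,0),(a,b,c))$, the $\delta_j$ are linear in $(y_0w_0,y_0x_0)$; the syzygy $(6+u)\delta_7-u\delta_8+6(t+2u)\delta_{11}=0$ makes $\delta_{11}$ redundant when $t+2u\neq 0$; and the remaining $2\times 2$ coefficient matrix is invertible exactly when $abc(a-b)(a-c)(b-c)\neq 0$. Then $(y_0w_0,y_0x_0)$ is uniquely determined, and $w_0+x_0+y_0=1$ forces $y_0^2-y_0+(y_0w_0+y_0x_0)=0$, a quadratic with root sum $1$, giving $2$ solutions $(w_0,x_0,y_0)$ and $2^3=8$ sign lifts, i.e.\ $16$ points. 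Because one actually \emph{solves} this system, the same computation yields surjectivity onto fibres, which is what part (1) literally asserts (``the image includes the complement of\dots''); your dimension count gives at best dominance, and even dominance presupposes that the quadric-to-surface map is generically finite, which is the very content of part (2) --- a circularity. Likewise ``finite away from'' the listed divisors is read off the explicit invertibility/solvability locus in the paper, not from the heuristic that the slice constructions break down there.

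Your fallback ``conceptual route'' is genuinely wrong, not just incomplete. The involution $(w,x,y)\mapsto(-w,-x,y)$ does fix $t,u,D_7,D_8,D_{11}$ --- but it also fixes $w_2=(wy)^2$, $x_2=3(xy)^2$ and $y_1=y^2$, which together with $(t,u)$ are coordinates on $Z$ by equation \eqref{eq:Z}. Hence it descends to the \emph{identity} on $Z$: it is a deck transformation of $f_2\circ f_1$, already counted inside the degree $96=2\cdot 48$, and tells you nothing about $f_3$. The genuine covering involution of $f_3$ is $(w_1,x_1,y_1)\mapsto(-w_1,-x_1,1-y_1)$ (Remark \ref{rmk:degree2discrepancy}), i.e.\ $y^2\mapsto w^2+3x^2$, which is realized by no sign change of $(w,x,y)$; it is exactly the exchange of the two roots of the quadratic in $y_0$ above. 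So the program ``show this involution accounts for the entire generic fibre of $f_3$'' cannot succeed, and had you carried it through you would have double-counted the symmetry and lost the factor of $2$. The fix is to abandon the sign-flip involution and run the paper's linear elimination in $(y_0w_0,y_0x_0)$, which delivers parts (1), (2) and (5) simultaneously.
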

\begin{proof}
A direct computation shows that:
\begin{align*}
  t((1,0,0),(a,b,c)) &= -3(a^2+b^2+c^2),\\
  u((1,0,0),(a,b,c)) &= -3(ab+ac+bc),\\
  D_7((1,0,0),(a,b,c)) &= -3(b+c)(a+b+c)(4a^2+5ab+b^2+5ac+5bc+c^2),\\
  D_8((1,0,0),(a,b,c)) &= -3(b+c)(a+b+c)(-4a^2-3ab+b^2-3ac-5bc+c^2),\\
  D_{11}((1,0,0),(a,b,c)) &= c^2b^2,
\end{align*}
and that $t((w,x,y),(a,b,c))$ and $u((w,x,y),(a,b,c))$ are both independent of $w,x,y$. On the other hand, let us write
\[
\delta_j((w,x,y),(a,b,c)) = D_j((w,x,y),(a,b,c))-D_j((1,0,0),(a,b,c))
\]
for $j=7,8,11$. Then we find that if we write $w_0 = w^2$, $x_0 = 3x^2$ and $y_0 = y^2$, so that $w_0 +x_0+y_0=1$, then
\begin{align*}
  \delta_7 &= 12(abc)^3y_0(w_0(c-a)+x_0(b-a)),\\
  \delta_8 &= 12(abc)^2y_0(w_0(c^2-a^2+ab-bc)+x_0(ac-bc+b^2-a^2)),\\
  \delta_{11} &=  -4(abc)y_0(w_0(b^2c^2-a^2b^2)+x_0(b^2c^2-a^2c^2)). 
\end{align*}
One finds that
\[
  (6+u)\delta_7-u\delta_8+6(t+2u)\delta_{11}=0.
\]
In the nondegeneracy locus, Remark \ref{rmk:nondegen} shows that $t+2u\neq 0$. Therefore, in this locus we can use the equation above to solve for $\delta_{11}$ in terms of the other parameters.

Fix now a (generic) triple $(a,b,c)$ satisfying $abc = a+b+c$. We wish to understand the map
\[
(w_0,x_0,y_0) \mapsto 12y_0 \twomat{(abc)^3(c-a)}{(abc)^3(b-a)}{(abc)^2(c^2-a^2+ab-bc)}{(abc)^2(ac-bc+b^2-a^2)}\twovec{w_0}{x_0} = \twovec{\alpha}{\beta},
\]
where $w_0+x_0+y_0=1$. The two-by-two matrix above is generically invertible, as long as none of $a,b,c$ are equal, and they are all nonzero. In this locus of points we can solve for any given $\alpha,\beta$ giving expression for $y_0w_0$ and $y_0x_0$ in terms of $\alpha,\beta,a,b,c$. This equation has two solutions unless $t+2u=0$.

Now let us summarize how the computations above have established each part of the theorem:
\begin{enumerate}
\item The proof above shows that we are surjective onto fibers unless the conditions from (1) hold. In Remark \ref{rmk:nondegen} we observed that these conditions are equivalent to $abc(a-b)(a-c)(b-c) \neq 0$.
\item This follows essentially because the matrix
  \[
\twomat{(abc)^3(c-a)}{(abc)^3(b-a)}{(abc)^2(c^2-a^2+ab-bc)}{(abc)^2(ac-bc+b^2-a^2)}
\]
appearing above is intertible unless $t+2u=0$. Then by studying the expressions for $\delta_7$ and $\delta_8$, we obtain the conditions in part (2). We see that the degree is $192$ if these conditions hold as $12$ is contributed from the first two coordinates by Lemma \ref{lem:genericfinitepart}, $8$ are from the map $(w,x,y) \mapsto (w_0,x_0,y_0)$, and $2$ are contributed by $(w_0,x_0,y_0) \mapsto (\delta_7,\delta_8)$, by the computation above.
\item This follows by Lemma \ref{lem:X0} as stated.
\item This follows by Lemma \ref{lem:Ghm} as stated.
\item This is deduced from the previous parts.
\end{enumerate}
This concludes the proof.
\end{proof}

 \begin{rmk}
    Throughout Section \ref{p: LargePatch}, and in particular in the proof of Theorem \ref{thm:genericfiniteall}, various algebraic conditions have arisen that will reappear in a fundamental way in our discussion in Section \ref{s:maximal} when we study surjectivity of representations.
 \end{rmk}

 \subsection{Other Cells in the Moduli Space}
 \label{ss:othercells}

The particular affine chart of the Grassmanian we studied in the previous section is particularly convenient because it is stable under the action of $\GL_2^{\short}$.
It is also convenient that it allows us to find a large open subset of the moduli space.

In this section we shall briefly discuss other cells in the decomposition of $G_2 / \SO_{\HH}$.
We note that there are several discrete invariants we can attach to elements of $G_2 / \SO_{\HH}$ when we view them as subspaces of $\OO_0$.
The two most obvious are the dimensions of the projections onto $\HH_0\oplus 0 \subset \OO_0$ and  $0\oplus \HH \subset \OO_0$, or equivalently by the rank-nullity theorem, the dimensions of the intersections with  $\HH_0\oplus 0 \subset \OO_0$ and  $0\oplus \HH \subset \OO_0$.
The chart we have already considered includes all points where the projection onto $\HH_0\oplus 0$ has dimension $3$.
 The two other options for the dimension of the projection onto $\HH_0\oplus 0$ are $2$ or $1$ dimensional.

When the dimension of the projection onto $\HH_0\oplus 0$ is $2$ dimensional, then the kernel is a $1$-dimensional subspace of $0 \oplus \HH$. 
We note that $\GL_2^{\short}$ acts on the one dimensional subspaces of $0 \oplus \HH$ and has exactly $4$ orbits. Three isotropic orbits, and, one non-isotropic orbit. It is thus natural to study this portion of the moduli-space as $4$ distinct components and construct a slice of the Grassmanian for each by fixing a representative for each of the $4$ orbits.

For example, for the case of the non-isotropic orbit the line spanned by $k=(0,1)$ can be chosen without loss of generality.
Then a point in $G_2 / \SO_{\HH}$ is determined by a $2$ dimensional non-degenerate subspace $V$ of $k^\perp$ for which the subspace spanned by $1$, $k$, and $V$ is closed under multiplication. Such a two dimensional subspace is uniquely determined by a choice of lines, one from each of the two eigenspaces for the action of $k$ on $k^\perp$, which together with $k$, generate a quaternion algebra.
One then needs only consider the quotien by the stablizier of the line spanned by $k$ in $\GL_2^{\short}$, which is $1$-dimensional, though disconnected. 

For the orbits for isotropic lines one could proceed similarly, the characterization of the complementary quadratic subspaces is more complex and the stablizers in $\GL_2^{\short}$ will be higher dimensional.

When the dimension of the projection onto $\HH_0\oplus 0$ is $1$ dimensional, then the kernel, $V$,  is a $2$ dimensional subspace of $0 \oplus \HH$. 

It this subspace is non-degenerate it is immediate that the choice of such a subspace completely determines a point of $G_2 / \SO_{\HH}$.
More generally, picking any $2$ basis vectors $x$ and $y$ for $V$ we would have that $xy\in \HH \oplus 0$, but $xy$ cannot be a multiple of $1$, hence $1$, $x$, $y$, $xy$ must determine the entire entire point in $G_2 / \SO_{\HH}$.
The Grassmanian of two dimensional subspaces of  $0 \oplus \HH$ is stable under $\GL_2^{\short}$ and so it would be relatively straightforward to study this quotient. 

Note, it is immediately clear that $V$ cannot be totally isotropic because the trace $0$ part of a quaternion algebra does not have a totally isotropic two dimensional subspace.
More generally $V$ cannot be degenerate, if $x$ is isotropic and perpendicular to $y$ which is non-isotropic then $xy$ would also be isotropic but then $y\oplus xy$ would be a two dimensional isotropic subspace which is also perpendicular to $x$, but the perp of a two dimensional subspace of a quaternion algebra is two dimensional.

\section{The family of representations}
\label{s:representation}

In this section we define a family of representations
\[
  \rho=\rho((w,x,y),(a,b,c)): \Gamma \rightarrow G_2
\]
parameterized by points $((w,x,y),(a,b,c)) \in \widetilde{X}_0$, which would be a versal family over an open subset of the character variety $Z$ of all representations $\Gamma \to G_2$. However, the natural description arising from the work above is quite messy to write down. Therefore, we will perform a slight adjustment in order to make our representation easier to describe. As a bonus, this process will also provide some new representations living in other Schubert cells that we omitted from our analysis above.

Towards this end, recall that $G_2(\CC)$ is represented inside $\GL_7(\CC)$ as the group of automorphisms of the octionions $\{i, j, ij, k, ik, jk, (ij)k\}$. We saw above that the moduli space of representations of $\Gamma$ into $G_2$ is covered by a variety depending on parameters $w,x,y,z, a,b,c$ satisfying the conditions $abc-a-b-c=0$ and $w^2+3x^2+y^2+3z^2=1$. However, to simplify our computations, we will introduce new parameters $a_1$, $a_2$, $b_1$, $b_2$, $c_1$, $c_2$,$w$, $x$, $y$, $z$ related to the original variables as follows. Based on the computations in Lemma \ref{lem: gl2reps}, note that the set  of matrices 
  \[
 \begin{pmatrix} 0 & 0 & 0 \\
                           a & 0 & 0 \\
                           0 & b & 0 \\
                           0 & 0 & c \end{pmatrix} 
                           \qquad \text{where}\qquad
  abc-a-b-c=0 
\]
is not itself the orbit of a subgroup of $G_2$. However, it is open inside the orbit of
\[ \left\{ 
\begin{pmatrix} a_1 & 0 &0 & 0 & -a_2 & 0 & 0 \\
                          0 & b_1 &0 & 0 &0 & -b_2 & 0 \\
                          0 & 0 &c_1 & 0 &0 & 0 & -c_2 \\
                          0 & 0 &0 & 1 &0 & 0 & 0 \\
                          a_2 & 0 &0 & 0 &a_1 & 0 & 0 \\
                          0 & b_2 &0 & 0 &0 & b_1 & 0 \\
                          0 & 0 &c_2 & 0 &0 & 0 & c_1 \end{pmatrix} \Bigg| \begin{matrix} a_1^2+a_2^2 = b_1^2+b_2^2 = c_1^2+c_2^2=1, \\[6pt]
                          \begin{pmatrix} a_1 & -a_2 \\ a_2 & a_1 \end{pmatrix} \begin{pmatrix} b_1 & -b_2 \\ b_2 & b_1 \end{pmatrix} \begin{pmatrix} c_1 & -c_2 \\ c_2 & c_1 \end{pmatrix}= \begin{pmatrix} 1 & 0 \\ 0 & 1 \end{pmatrix}\end{matrix} \right\}, \]
which is a maximal torus in $G_2$, acting on the base point $a=b=c=0$.
The points where one of $a_1$, $b_1$, or $c_1$ is zero are not in the coordinate chart in which we have been working, but we shall include them in our new family of representations. This means that we are considering a larger set of representations than we considered in our previous coordinates, but the difference is in codimension (at least) $1$. Thus, let our new parameters satisfy the conditions:
\begin{enumerate}
\item $a_1^2+a_2^2=b_1^2+b_2^2=c_1^2+c_2^2 = 1$,
\item $w^2+3x^2+y^2+3z^2=1$, and
 \item $\stwomat {a_1}{-a_2}{a_2}{a_1}\stwomat {b_1}{-b_2}{b_2}{b_1}\stwomat {c_1}{-c_2}{c_2}{c_1} = I$.
 \end{enumerate}
To go back to the parameters $a,b,c$ we are essentially setting $a = a_2/a_1$, $b = b_2/b_1$ and $c=c_2/c_1$.

In terms of these new parameters, define the representation: 
\begin{align*}
\phi(S) &= \left(\begin{smallmatrix}
a_{1}^{2} - a_{2}^{2} & 0 & 0 & 0 & 2 a_{1} a_{2} & 0 & 0 \\
0 & b_{1}^{2} - b_{2}^{2} & 0 & 0 & 0 & 2 b_{1} b_{2} & 0 \\
0 & 0 & c_{1}^{2} - c_{2}^{2} & 0 & 0 & 0 & 2 c_{1} c_{2} \\
0 & 0 & 0 & -1 & 0 & 0 & 0 \\
2 a_{1} a_{2} & 0 & 0 & 0 & -a_{1}^{2} + a_{2}^{2} & 0 & 0 \\
0 & 2 b_{1} b_{2} & 0 & 0 & 0 & -b_{1}^{2} + b_{2}^{2} & 0 \\
0 & 0 & 2 c_{1} c_{2} & 0 & 0 & 0 & -c_{1}^{2} + c_{2}^{2}
\end{smallmatrix}\right),\\
  \phi(R)  &= \left(\begin{smallmatrix}
1 & 0 & 0 & 0 & 0 & 0 & 0 \\
0 & 1 & 0 & 0 & 0 & 0 & 0 \\
0 & 0 & 1 & 0 & 0 & 0 & 0 \\
0 & 0 & 0 & -\frac{1}{2}& -\frac{3}{2}( w^{2} +3 x^{2} -y^{2} -3 z^{2}) & -3( x y + w z) & 3( w y - 3 x z) \\
0 & 0 & 0 & \frac{1}{2}( w^{2} + 3x^{2} - y^{2} - 3z^{2}) & -\frac{1}{2} & w y - 3 x z & 3( x y +  w z) \\
0 & 0 & 0 & 3(x y + w z) & -3(w y -3 x z) & -\frac{1}{2} & -\frac{3}{2}( w^{2} +3x^{2} -y^{2} -3z^{2}) \\
0 & 0 & 0 & -w y + 3 x z & -3( x y + w z) & \frac{1}{2}( w^{2} + 3 x^{2} - y^{2} - 3z^{2}) & -\frac{1}{2}
\end{smallmatrix}\right).
\end{align*}

\begin{rmk}
  \label{r:phiT}
  If $g$ is a matrix in $G_2$, then its characteristic polynomial takes the form
  \[
X^7+u_1X^6+u_2X^5+(u_1+u_2-u_1^2)X^4-(u_1+u_2-u_1^2)X^3-u_2X^2-u_1X-1.
\]
Notice that $u_1 = -\Tr(g)$. Taking $g = \phi(T)$, where recall $T =\stwomat 1101$, we have
\[
  u_1 = 3(a_2^2+b_2^2+c_2^2)-5.
\]
The expression for $u_2$ is more complicated and depends on all of our parameters. However, since $u_1$ does not depend on $w,x,y,z$, we see that for any specialization of the parameters $a_1$, $a_2$, $b_1$, $b_2$, $c_1$, $c_2$ satisfying our conditions, we have a two-dimensional space of representations such that $\phi(T)$ has a constant characteristic polynomial.

Setting $z=0$ to return to our slice discussed above, this gives a one-parameter family of representations with the characteristic polynomial of $\phi(T)$ fixed, but such that generically in a small neighbourhood where we vary the parameters $w,x,y$, the representations are not isomorphic. This agrees with the fact that the representations on this component of the character variety of representations of the modular group into $G_2$ consists of representations that are not rigid in the sense of Katz \cite{katz}.
\end{rmk}

\begin{prop}
  \label{p:indecomp}
  Suppose that
  \begin{enumerate}
  \item $a_1a_2b_1b_2c_1c_2 \neq 0$;
  \item $a_j \neq \pm b_1,\pm b_2, \pm c_1, \pm c_2$, and $b_j \neq \pm c_1, \pm c_2$ for $j=1,2$;
  \item none of $a_1^2$, $b_1^2$ or $c_1^2$ is equal to $1/2$;
  \item at most one element among $wy-3xz$, $xy+wz$, and $w^2+3x^2-y^2-3z^2$ is zero.
  \end{enumerate}
  Then the representation defined by $\phi$ is indecomposable.
\end{prop}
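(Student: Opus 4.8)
The plan is to prove indecomposability by showing that the commutant $\Endo_\Gamma(V)$ of the representation on $V=K^7$ contains no idempotent other than $0$ and $\Id$. Indeed, any decomposition $V=V'\oplus V''$ into subrepresentations produces the projection onto $V'$ as such an idempotent, and conversely; so it suffices to take $E\in\Endo_\Gamma(V)$ with $E^2=E$, $E\phi(S)=\phi(S)E$ and $E\phi(R)=\phi(R)E$, and to force $E$ to be scalar. Since $\phi(S)$ is an involution, its $(+1)$-eigenspace $V_+$ is spanned by $v_1=a_1e_1+a_2e_5$, $v_2=b_1e_2+b_2e_6$, $v_3=c_1e_3+c_2e_7$ and its $(-1)$-eigenspace $V_-$ by $w_1=-a_2e_1+a_1e_5$, $w_2$, $w_3$ (defined analogously) and $e_4$; commuting with $\phi(S)$ forces $E=P\oplus N$ with $P$ on $V_+$ and $N$ on $V_-$. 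Likewise $\phi(R)$ is semisimple of order three with fixed space $F=\langle e_1,e_2,e_3\rangle$ and complementary $\phi(R)$-stable subspace $U=\langle e_4,e_5,e_6,e_7\rangle$, so $E$ preserves $F$ and $U$ as well.

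I would first exploit hypothesis (1). Expressing each $e_i$ in the $\{v_j,w_j\}$ basis and using that $E$ preserves $F$ and $U$, the non-vanishing of all of $a_1,a_2,b_1,b_2,c_1,c_2$ forces $e_4$ to be an eigenvector, $Ee_4=n_4e_4$; it forces $N$ to be block diagonal for $\langle w_1,w_2,w_3\rangle\oplus\langle e_4\rangle$; and it forces the matching of diagonal entries $P_{ii}=N_{ii}$, with the off-diagonal entries of $N$ proportional to those of $P$ (for instance $a_2b_1P_{21}=a_1b_2N_{21}$, and analogously for the other indices). Pushing these identities back through the change of basis, one finds that on $U$ the operator $E$ fixes the line $\langle e_4\rangle$ and acts on $\langle e_5,e_6,e_7\rangle$ by a matrix whose diagonal is $(P_{11},P_{22},P_{33})$ and whose off-diagonal entries are nonzero multiples of the off-diagonal entries of $P$; thus $E$ is scalar precisely when $P$ is diagonal with $P_{11}=P_{22}=P_{33}=n_4$.

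The last input is the commutator $[E,\phi(R)]=0$ on the four-dimensional block $U$, where $\phi(R)$ acts through the $4\times4$ matrix defined above whose off-diagonal entries are built from $B:=wy-3xz$, $A:=xy+wz$ and $D:=w^2+3x^2-y^2-3z^2$. This is a linear system in the entries of $P$ and the scalar $n_4$. Its off-diagonal equations are eliminated using hypotheses (2) and (3): the distinctness of $a_j,b_j,c_j$ up to sign keeps the three coordinate blocks separated, and (3) keeps the $\phi(S)$-blocks nondegenerate, so that the only solution has $P$ diagonal. The surviving diagonal equations, coming from commuting $E$ with $\phi(R)e_4$ and $\phi(R)e_5$, read $D(P_{11}-n_4)=A(P_{22}-n_4)=B(P_{33}-n_4)=0$ together with $B(P_{22}-P_{11})=A(P_{33}-P_{11})=0$. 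By hypothesis (4) at least two of $A,B,D$ are nonzero, and a short case analysis over which one (if any) vanishes then forces $P_{11}=P_{22}=P_{33}=n_4$. Hence $P=n_4\Id$, and by the matching $P_{ii}=N_{ii}$ also $N=n_4\Id$, so $E=n_4\Id$; idempotency gives $n_4\in\{0,1\}$ and thus $E\in\{0,\Id\}$.

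The technical heart, and the step I expect to be the main obstacle, is this final analysis of the commuting conditions on the block $U$: one must organize the linear system so that hypotheses (2) and (3) visibly produce the non-degeneracy needed to kill all off-diagonal entries, while hypothesis (4) provides exactly the coupling that pins the three diagonal eigenvalues together. Everything preceding it is bookkeeping with the two eigenspace decompositions of $\phi(S)$ and $\phi(R)$. I note that the same computation in fact reduces the whole commutant to scalars, so that under (1)--(4) the representation is even irreducible; but only the weaker conclusion of indecomposability is asserted and needed here.
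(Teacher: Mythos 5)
Your proposal is correct in substance, but it takes a genuinely different route from the paper. The paper's proof is computer-assisted: it parametrizes the $17$-dimensional centralizer of $\phi(R)$ inside the $25$-dimensional space of block matrices, imposes $[A,\phi(S)]=0$ in a computer algebra system, localizes away from conditions (1)--(3) by manually dividing Groebner basis elements by the invertible factors $p(a_1,\dots,c_2)$, and only at the end invokes condition (4) to conclude that the commutant is scalar. You instead start from the $\phi(S)$-eigenspace decomposition $V_+\oplus V_-$ and do the bookkeeping by hand, and your key claims check out against the matrices: $V_-\cap U=\langle e_4\rangle$ under (1), the matching $P_{ii}=N_{ii}$, the proportionality $a_2b_1P_{21}=a_1b_2N_{21}$, and the final system $D(P_{11}-n_4)=A(P_{22}-n_4)=B(P_{33}-n_4)=0$, $B(P_{11}-P_{22})=A(P_{11}-P_{33})=0$ is exactly what $[E|_U,\phi(R)|_U]=0$ produces once $E|_U$ is diagonal, and your case analysis under (4) is right. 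What your approach buys is transparency --- it replaces the Groebner computation by explicit linear algebra and shows precisely which hypothesis does what --- at the cost of leaving the off-diagonal elimination, which you rightly flag as the technical heart, asserted rather than proved. That step does in fact go through, and more cleanly than you suggest: requiring $E$ to preserve \emph{both} $F=\ker(\phi(R)-1)$ and $U$ yields, for each off-diagonal slot, the \emph{pair} of constraints
\begin{equation*}
a_1b_2P_{21}=a_2b_1N_{21}, \qquad a_2b_1P_{21}=a_1b_2N_{21},
\end{equation*}
whose determinant is $a_1^2b_2^2-a_2^2b_1^2=a_1^2-b_1^2$ (using $a_1^2+a_2^2=b_1^2+b_2^2=1$), nonzero by hypothesis (2); so $P_{21}=N_{21}=0$, and similarly for the other slots using $a_1\neq\pm c_1$ and $b_1\neq\pm c_1$. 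In particular hypothesis (3), which you invoke for ``nondegeneracy of the $\phi(S)$-blocks,'' is not actually what kills the off-diagonals on this route --- only the sub-conditions $a_1\neq\pm b_1$, $a_1\neq\pm c_1$, $b_1\neq\pm c_1$ of (2) are needed there, which is harmless since you may assume all of (1)--(4).

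One genuine error to fix: your closing remark that scalar commutant gives irreducibility is false in general. Absent semisimplicity, a reducible-but-indecomposable representation can have scalar commutant (e.g.\ a non-split extension of two distinct characters), so the computation yields exactly indecomposability --- which is all the proposition asserts. Your idempotent framing of the main argument is, for the same reason, the logically precise version of the paper's looser opening sentence, and should be kept.
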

\begin{proof}
  A representation is reducible if and only if it commutes with some nonscalar transformation.

  Since $\phi(R)$ has $3$ distinct eigenvalues, of multiplicities $3$, $2$ and $2$, its commutator is a linear subspace of matrices of dimension $17$. This subspace is contained inside the larger $25$-dimensional space of matrices of the form
  \[
A = \left(\begin{smallmatrix}
A_{11} & A_{12} & A_{13} & 0 & 0 & 0 & 0 \\
A_{12} & A_{22} & A_{23} & 0 & 0 & 0 & 0 \\
A_{31} & A_{32} & A_{33} & 0 & 0 & 0 & 0 \\
0 & 0 & 0 & A_{44} & A_{45} & A_{46} & A_{47} \\
0 & 0 & 0 & A_{54} & A_{55} & A_{56} & A_{57} \\
0 & 0 & 0 & A_{64} & A_{65} & A_{66} & A_{67} \\
0 & 0 & 0 & A_{74} & A_{75} & A_{76} & A_{77}
\end{smallmatrix}\right)
  \]
  We computed the commutator $[A,\phi(R)]=0$ explicitly, and then restricted to such values of $A$.

  Once restricted to such $A$ values, we then considered the equation $[A,\phi(S)]=0$ on a computer algebra system. After localizing at the equations (1), (2) and (3), it is easy to see that one must have the upper $3\times 3$-block of $A$ being diagonal by examining entries of $[A,\phi(S)]$. After imposing this, then the conditions on $[A,\phi(S)]$, where
  \begin{enumerate}
  \item $A$ satisfies $[A,\phi(R)]=0$ and
  \item the upper $3\times 3$ block of $A$ is diagonal,
  \end{enumerate}
  all factor into expressions of the form:
  \[
   p(a_1,a_2,b_1,b_2,c_1,c_2)\cdot q(w,x,y,z,A_{ij}),
 \]
 for polynomials $p$ and $q$ with rational coefficients.  Moreover, subject to conditions (1), (2) and (3), the factors $p(a_1,a_2,b_1,b_2,c_1,c_2)$ are all invertible. Hence, we computed an ideal $J$ just using the $q(w,x,y,z,A_{ij})$ factors, which is essentially a localization of the equations above. At this point, it was easy to compute a Groebner basis for this simpler ideal $J$ and deduce Proposition 5.1 from this.

 Actually, we performed this computation in several steps: we first used the Groebner basis for $J$ to make $A$ slightly closer to diagonal --- eventually needing to assume condition (4) --- and then we repeated this procedure in several steps. But at the end of this process, conditions (1), (2), (3) and (4) are seen to be sufficient to deduce that only scalar matrices $A$ solve the equations $[A,\phi(R)]=0$ and $[A,\phi(S)]=0$. This concludes the proof.
\end{proof}

Let us summarize some of the important properties of this family of representations in the following theorem:
\begin{thm}
  \label{t:largedegree}
  There exists a nonempty open subset $Z \subseteq X$ in the large component and a cover $\widetilde X_0 \to Z$ described concretely by the family of representations:
\begin{align*}
\phi(S) &= \left(\begin{smallmatrix}
a_{1}^{2} - a_{2}^{2} & 0 & 0 & 0 & 2 a_{1} a_{2} & 0 & 0 \\
0 & b_{1}^{2} - b_{2}^{2} & 0 & 0 & 0 & 2 b_{1} b_{2} & 0 \\
0 & 0 & c_{1}^{2} - c_{2}^{2} & 0 & 0 & 0 & 2 c_{1} c_{2} \\
0 & 0 & 0 & -1 & 0 & 0 & 0 \\
2 a_{1} a_{2} & 0 & 0 & 0 & -a_{1}^{2} + a_{2}^{2} & 0 & 0 \\
0 & 2 b_{1} b_{2} & 0 & 0 & 0 & -b_{1}^{2} + b_{2}^{2} & 0 \\
0 & 0 & 2 c_{1} c_{2} & 0 & 0 & 0 & -c_{1}^{2} + c_{2}^{2}
\end{smallmatrix}\right),\\
  \phi(R)  &= \left(\begin{smallmatrix}
1 & 0 & 0 & 0 & 0 & 0 & 0 \\
0 & 1 & 0 & 0 & 0 & 0 & 0 \\
0 & 0 & 1 & 0 & 0 & 0 & 0 \\
0 & 0 & 0 & -\frac{1}{2}& -\frac{3}{2}( w^{2} +3 x^{2} -y^{2}) & -3 x y  & 3 w y \\
0 & 0 & 0 & \frac{1}{2}( w^{2} + 3x^{2} - y^{2}) & -\frac{1}{2} & w y & 3x y \\
0 & 0 & 0 & 3x y & -3w y & -\frac{1}{2} & -\frac{3}{2}( w^{2} +3x^{2} -y^{2}) \\
0 & 0 & 0 & -w y & -3 x y & \frac{1}{2}( w^{2} + 3 x^{2} - y^{2}) & -\frac{1}{2}
\end{smallmatrix}\right).
\end{align*}
where
\begin{enumerate}
\item $a_1^2+a_2^2=b_1^2+b_2^2=c_1^2+c_2^2 = 1$,
\item $w^2+3x^2+y^2=1$, and
 \item $\stwomat {a_1}{-a_2}{a_2}{a_1}\stwomat {b_1}{-b_2}{b_2}{b_1}\stwomat {c_1}{-c_2}{c_2}{c_1} = I$.
 \end{enumerate}
This map $\widetilde X_0 \to Z$ is generically finite of degree $384$.
\end{thm}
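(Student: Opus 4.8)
The plan is to deduce this from Theorem~\ref{thm:genericfiniteall} by comparing the present parameterization with the slice parameterization used there. Write $\widetilde X_0^{\mathrm{sl}}$ for the space
\[
\widetilde X_0^{\mathrm{sl}} = \{\,((w,x,y),(a,b,c)) \mid w^2+3x^2+y^2=1,\ abc-a-b-c=0\,\}
\]
of Lemma~\ref{lem:X0}, and recall from parts (3) and (4) of Theorem~\ref{thm:genericfiniteall} that the composite $f_2\circ f_1\colon \widetilde X_0^{\mathrm{sl}}\to Z$ is generically finite of degree $2\cdot 48=96$. First I would build a morphism $\pi\colon \widetilde X_0\to \widetilde X_0^{\mathrm{sl}}$ by setting $a=a_2/a_1$, $b=b_2/b_1$, $c=c_2/c_1$ and leaving $(w,x,y)$ unchanged. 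Condition (1) makes $a_1b_1c_1$ generically invertible, and expanding $(a_1+a_2i)(b_1+b_2i)(c_1+c_2i)$ and reading off the vanishing of the imaginary part shows that condition (3) forces $a+b+c=abc$; hence $\pi$ indeed lands in $\widetilde X_0^{\mathrm{sl}}$.

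Next I would compute $\deg\pi$. Over a generic point the fiber records the lifts of the ratios $(a,b,c)$ to the conics of condition (1): each ratio $a_2/a_1$ admits exactly the two lifts $\pm(a_1,a_2)$, giving $2^3=8$ sign patterns a priori. Flipping a single pair $(a_1,a_2)\mapsto(-a_1,-a_2)$ negates the corresponding factor of the product in condition (3), so (3) survives precisely under the patterns flipping an even number of the three pairs, of which there are $4$. Thus $\pi$ is generically finite of degree $4$; these are exactly the sign-change symmetries recorded in Theorem~\ref{t:main1}.

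It remains to identify the map $\widetilde X_0\to Z$ ``described by $\phi$'' with $(f_2\circ f_1)\circ\pi$. Since every entry of $\phi(S)$ is quadratic in the pairs $(a_1,a_2),(b_1,b_2),(c_1,c_2)$ and every entry of $\phi(R)$ is quadratic in $(w,x,y)$, the class $[\phi]\in Z$ depends on a point of $\widetilde X_0$ only through $\pi$; so $P\mapsto[\phi(P)]$ factors as $\psi\circ\pi$ for a well-defined $\psi\colon \widetilde X_0^{\mathrm{sl}}\to Z$, and it suffices to prove $\psi=f_2\circ f_1$. To this end I would recognize the explicit matrices: specializing $a_1=b_1=c_1=1,\ a_2=b_2=c_2=0$ gives $\phi(S)=\alpha_2$, and specializing $(w,x,y)=(1,0,0)$ gives $\phi(R)=\alpha_3$, so in general $\phi(S)=\tau\alpha_2\tau^{-1}$ with $\tau$ the maximal-torus element built from $(a_1,\dots,c_2)$ in Section~\ref{s:representation}, and $\phi(R)=h\alpha_3 h^{-1}$ with $h=w+xi+yj\in\SL_2^{\llong}$. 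Because $Z_{G_2}(\alpha_2)=\SO_{\HH}$ and $Z_{G_2}(\alpha_3)=\GL_2^{\short}$, normalizing $\phi$ so that $\phi(R)=\alpha_3$ exhibits its class as the double coset attached to the subspace $h^{-1}\tau\cdot\HH_0$; matching this subspace, up to $\GL_2^{\short}$, with the slice representatives of Lemmas~\ref{lem:modSO4} and \ref{lem: gl2reps}---where the torus orbit of the base point $a=b=c=0$ produces the diagonal representatives carrying the data $(a,b,c)$ and the residual one-parameter action produces $(w,x,y)$---identifies $\psi$ with $f_2\circ f_1$. This reconciliation of the explicit $7\times 7$ presentation with the $4\times 3$ Grassmannian slice and the double-coset description of $X$ is the step I expect to demand the most care; everything else is formal. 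Granting it,
\[
\deg\bigl(\widetilde X_0\to Z\bigr)=\deg\psi\cdot\deg\pi=96\cdot 4=384,
\]
as claimed.
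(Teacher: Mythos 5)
Your proposal is correct and takes essentially the same route as the paper: the paper's own proof of Theorem~\ref{t:largedegree} is a one-line deduction from Theorem~\ref{thm:genericfiniteall} together with the reparameterization $a=a_2/a_1$, $b=b_2/b_1$, $c=c_2/c_1$ discussed in Section~\ref{s:representation}, which is exactly your strategy. You in fact supply bookkeeping the paper leaves implicit---that $\pi$ has degree $4$ (the even sign-flip patterns preserving condition (3)), that $\phi$ factors through $\pi$ because its entries are quadratic in the pairs, and that the identification of the induced map with $f_2\circ f_1$ gives $96\cdot 4=384$---so your write-up is a faithful expansion of the intended argument.
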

\begin{proof}
  This is a consequence of Theorem \ref{thm:genericfiniteall} and the discussion above about introducing the parameters $a_1$, $a_2$ et cetera in place of $a$, $b$, $c$ as in Theorem \ref{thm:genericfiniteall}.
\end{proof}

While the cover above arises from a natural construction, its degree is quite large. We shall now introduce a factorization via a reparameterization of the $w$, $x$ and $y$ variables that will eliminate some of the excessive degrees of symmetry above. Let us now write:
\begin{align*}
  w_1 &= wy,&  x_1 &= xy,&  y_1 &= y^2,
\end{align*}
so that
\[
\phi(R)  = \left(\begin{smallmatrix}
1 & 0 & 0 & 0 & 0 & 0 & 0 \\
0 & 1 & 0 & 0 & 0 & 0 & 0 \\
0 & 0 & 1 & 0 & 0 & 0 & 0 \\
0 & 0 & 0 & -\frac{1}{2}& -\frac{3}{2}( 1 -2y_1) & -3 x_1  & 3 w_1 \\
0 & 0 & 0 & \frac{1}{2}(1 - 2y_1) & -\frac{1}{2} & w_1 & 3x_1\\
0 & 0 & 0 & 3x_1 & -3w_1 & -\frac{1}{2} & -\frac{3}{2}( 1 -2y_1) \\
0 & 0 & 0 & -w_1 & -3 x_1 & \frac{1}{2}( 1 - 2y_1) & -\frac{1}{2}
\end{smallmatrix}\right).
\]

\subsection{Proof of Theorem \ref{t:main1}}
  \label{proofmain1}
  Notice that after our reparameterization, condition (2) of Theorem \ref{t:largedegree} becomes the condition (2) of Theorem \ref{t:main1}. It remains to check the degree is as stated, and that the Galois group is what we say it is in Theorem \ref{t:largedegree}.

  The reparameterization is of degree $8$, and so it reduces the total degree of the map in Theorem \ref{t:largedegree} by a factor of $8$. This results in a degree of $48$ as claimed.

  As for the symmetries of the cover, observe that if we write $t(\phi)$ and $u(\phi)$ for the values of the invariants $t$ and $u$ from Section \ref{s:computational} evaluated on our representation, then
  \begin{align*}
  t(\phi) &=-3\frac{(a_2b_1c_1)^2+(a_1b_2c_1)^2+(a_1b_1c_2)^2}{(a_1b_1c_1)^2},\\
  u(\phi) &= -3\frac{a_2b_2c_1+a_2b_1c_2+a_1b_2c_2}{a_1b_1c_1}.
  \end{align*}
  These are visibly invariant under permutation of the pairs $(a_1,a_2)$, $(b_1,b_2)$ and $(c_1,c_2)$. They are also preserved by the following sign changes: one can swap $(a_1,a_2)$ and $(b_1,b_2)$ with their negatives, and similarly for the other possible pairs. Then there is the symmetry given by changing the sign of all of $a_2$, $b_2$ and $c_2$ simultaneously. This gives another $8$ symmetries, for a total group of symmetries of degree $48$, which agrees with the degree of our map. Notice also that these symmetries respect the defining equations (1), (2) and (3) of Theorem \ref{t:main1}. Therefore, this exhausts all of the symmetries of this cover.

  By construction of our invariants $t$ and $u$, if $\phi_1$ and $\phi_2$ are two equivalent specializations, then we have $t(\phi_1) = t(\phi_2)$ and $u(\phi_1)= u(\phi_2)$ as in the last statement of Theorem \ref{t:main1}. The further claim about the equality of the $w_1$, $x_1$ and $y_1$ coordinates of the two specializations almost follows from Theorem \ref{thm:genericfiniteall}, except that in that theorem the map to the invariants had degree $2$ ambiguity coming from the map $Z \to Y$ of that theorem. The issue at this point is that we have not every given a concrete description of $Z$ per se, but rather we have described covers and quotients in explicit fashion. We now eliminate the ambiguity and give a concrete description of $Z$, which is an open subset of our moduli space.

  We claim that
  \begin{equation}
\label{eq:Z}
    Z = \{(t,u,w_2,x_2,y_1) \mid w_2+x_2+y_1^2=y_1\}.
\end{equation}
To see this, observe that if we set $w_2 = w_1^2$ and $x_2 = 3x_1^2$, then we have maps:
  \[
\widetilde X_0 \to \{(t,u,w_2,x_2,y_1) \mid w_2+x_2+y_1^2=y_1\} \to Z \to Y,
\]
which factors $f\colon \widetilde X_0 \to Z$, where $f$ is as defined in Theorem \ref{thm:genericfiniteall}. Note that in the notation of Theorem \ref{thm:genericfiniteall} we have $w_2 = w_0y_0$ and $x_2 =x_0y_0$.

The last map above $Z \to Y$ is degree $2$, but an examination of the proof of Theorem \ref{thm:genericfiniteall} shows that so is the map 
\[
\{(t,u,w_2,x_2,y_1) \mid w_2+x_2+y_1^2=y_1\} \to Y.
\]
In particular, this shows that $Z$ is as stated in equation \eqref{eq:Z}.

Since $Z$ is an open subset of the moduli space of equivalence classes of representations $\Gamma \to G_2$, this now proves the last statement of Theorem \ref{t:main1}.  

\begin{rmk}
  \label{rmk:degree2discrepancy}
  The modular group has an outer automorphism $\iota$ given by the map
  \[
 \iota\stwomat abcd= \stwomat{a}{-b}{-c}d.
\]
Likewise, the family of representations $\phi$ of Theorem \ref{t:main1} admits an outer automorphism mapping $(w_1,x_1,y_1) \mapsto (-w_1,-x_1,1-y_1)$. A direct computation shows that this latter symmetry takes $\phi(R)$ to $\phi(R^2)$, and precomposition with $\iota$ does the same, thus these symmetries are equal. They are likewise equal to the covering automorphism of the map $Z\to Y$ discussed in Theorem \ref{thm:genericfiniteall}.
\end{rmk}

\section{Factoring through a maximal subgroup}
\label{s:maximal}
The maximal subgroups of $G_2(\bF_q)$ have been classified in \cite{cooperstein,kleidman} up to conjugacy. For $q=p^\alpha$ and $p\neq 3$ there is a unique conjugacy class of the following types of maximal subgroups:
\begin{enumerate}
\item An extension of an $A_2$ subgroup by its outer automorphism.
\item A $D_2$ subgroup. This is the group $\SO_\HH$.
\item The parabolic subgroup for the long root $A_1$ subgroup.
\item The parabolic subgroup for the short root $A_1$ subgroup.
\item $G_2(p^\beta)$ for $\alpha=\ell\beta$ with $\ell$ a prime. These do not exist when $q=p$.
\item A group isomorphic to $\PGL_2(q)$. 
\item A group $2^3\cdot L_3(2) = 2^3\cdot L_2(7)$ which has order $1344=8\cdot 168 = 2^{6}\cdot 3 \cdot 7$.
\item A group $L_2(8) =  {^2}G_2(3)'$ which has order $504 = 2^3\cdot3^2\cdot7$.
(this only occurs if $\bF[w]\subseteq\bF_q$ with $w^3-3w+1=0$ and is only maximal with equality.)

\item A group $L_2(13)$ which has order $1092 = 2^2\cdot3\cdot7\cdot13$.
(this only occurs if $\bF[w]\subseteq\bF_q$ with $w^2-13=0$ and is only maximal with equality.)
\item A group $G_2(2)=U_3(3):2$ which has order $12096 = 2^6\cdot 3^3 \cdot 7$ 
(this is only maximal if $q$ is prime.)
\item A group $J_1$ which only occurs for $p=11$. This group has order $175560= 2^3\cdot3\cdot5\cdot7\cdot11\cdot19$.
\end{enumerate}

In Section \ref{s:obstructions} below we discuss the general problem of classifying representations that factor through maximal subgroups of $G(\FF_q)$. In the following sections we then specialize to the groups above to determine sufficient conditions for when our family $\phi$ discussed in Section \ref{s:representation} is surjective onto $G_2(\FF_p)$. We restrict to $q=p$ being prime mainly to avoid having to discuss case (5) above.

\subsection{Obstructions to surjectivity}
\label{s:obstructions}
The primary obstruction to surjectivity of a map $\varphi : \PSL_2(\ZZ) \rightarrow G(\FF_q)$ is that the image is contained in one of the maximal subgroups of $G(\FF_q)$.

For each maximal subgroup $M$ of $G(\FF_q)$ , and each component of the moduli space of maps $\PSL_2(\ZZ)$, that is each pair of a conjugacy class of an order $2$ and order $3$ element, determines
a subvariety of moduli space of maps $ \PSL_2(\ZZ)$ to $G$ over an algebraically closed field. Indeed, fix $\tilde{\alpha_2} =w_2^{-1} \alpha_2 w_2 \in M$ a conjugate of $\alpha_2$ and $\tilde{\alpha_3} = w_3\alpha_3w_3^{-1}\in M$ then we map
\[  (M/ Z_M(\tilde{\alpha_2}) \times M /Z_M(\tilde{\alpha_3}))/M \rightarrow  (G/Z_G(\alpha_2) \times G /Z_G(\alpha_3) )/G\]
by  
\[  ( m_2, m_3)  \mapsto   (m_2w_2,m_3w_3). \]
In particular, the image of $Z_M(\tilde{\alpha_3}) \backslash M /Z_M(\tilde{\alpha_2})$ in $Z_G(\alpha_3) \backslash G /Z_G(\alpha_2)$ is simply
\[Z_G(\alpha_3)   \backslash w_3 M w_2/Z_G(\alpha_2). \]
We note that there are only finitely many conjugacy classes of maximal subgroups, these are listed below, and in each only finitely many conjugacy classes of order $2$ and order $3$ elements.

In the cases where the inclusion of $M$ arises as the inclusion of an algebraic group then we may take $M$ to be one of the standard forms described in Section \ref{ss:relsub} so that each conjugacy class of order $2$ and $3$ element has a representative from a maximal torus. In this way the elements $w_2$ and $w_3$ can be assumed to be Weyl group elements from $G$.
The element $w_2$ can always be one of the three cycles from the Weyl group, as these act transitively on order $2$ elements.
The element $w_3$ can always be one of the six cycles from the Weyl group, as these act transitively on both conjugacy classes of order $3$ elements from $A$.

\begin{rmk}
Because many of the maximal subgroups have a unique conjugacy class of order $2$ and $3$ element they cannot have image in both $X$ and $Y$. 
Whether they have image in $X$ or $Y$ will depend on which order $3$ element from $G$ the order $3$ element from $M$ is conjugate to. 
This can be distinguished using the character of the relevant $7$-dimensional representation.
\end{rmk}

\subsection{Determining conditions for reducibility}
\label{ss:reducible} Our aim now is to determine some Zariski open conditions that ensure our represention $\phi$ surjects onto $G_2(\FF_p)$. For this, it is sufficient to know that the image is not contained in a maximal subgroup. For three classes of maximal subgroups, namely the $A_2$, $\SO_\HH$ and parabolic subgroups, we can determine such conditions is a somewhat uniform manner that we describe here.

The uniform feature of these three families of maximal subgroups is that their natural $7$-dimensional representation coming from their embedding in $G_2$ has a nontrivial subrepresentation of dimension at most $3$.

Let us now describe how one can determine such Zariski open conditions to rule out having a nontrivial subrepresentation of small dimension. To rule out one-dimensional subrepresentations, one is looking at common eigenvectors, which is a relatively straightforward case. Therefore, let us describe in detail how to handle the case of subrepresentations of dimension $2$, and the reader can extend this naturally to the case of subrepresentations of dimension $3$ or higher.

We will work on cells in the Grassmanian of $2$-planes $\Gr(2,7)$. The standard chart is
\[
  C=\left(\begin{matrix}
      1 & 0 & A_1&A_2&A_3&A_4&A_5\\
      0 & 1 & A_6&A_7&A_8&A_9&A_{10}
\end{matrix}\right)
\]
Our condition is that  $\phi(S)C^T$ and $\phi(R)C^T$ are still contained in the chart defined by $C$. This leads to linear conditions on the variables $A_j$ where the coefficients are polynomials in our ambient variables $a_1$, $a_2$, et cetera. These equations define an ideal $I$ in the ring of variables involving the $a_1$, $a_2$ et cetera, along with our new variables $A_j$ introduced in $C$. Ideally we would like to find a primary decomposition for $I$ but the naive approach may not always be computationally feasible, and so we employ a somewhat ad hoc trick.

The idea now is that we already know of some algebraic conditions on our original variables $a_1$, $a_2$, et cetera, that lead to reducible representations. For example, see the conditions in Proposition \ref{p:indecomp}, specifically conditions (1) and (2). We proceed by computing a Groebner basis for $I$, and we observe that many of the basis elements are divisible by equations in the ideal defined by conditions (1) and (2) of Proposition \ref{p:indecomp}. Therefore, we can perform a manual sort of localisation away from these conditions by simply dividing these basis vectors by the corresponding conditions. This then leads to a new ideal $I'$. If we are not yet able to primary decompose $I'$, we then repeat this procedure. Somewhat surprisingly, to us, this ad hoc computation allowed us to formulate the results in the following three subsections.

This of course only treats the standard cell, but one repeats the procedure a finite number of times for the other cells.

\subsection{Factoring through an  $A_2$ subgroup}
\label{ss:factoringA2}
The $A_2$ subgroup is the group $\SU_M$, and in this case things are particularly simple because $\phi(R)$ and $\phi(S)$ have a common eigenvector with eigenvalue $1$. Keeping in mind the relation $3x^2+y^2+3z^2+w^2=1$, one can explicitly check that such an eigenvector of $\phi(R)$ is of the form $(\alpha, \beta, \gamma, 0,0,0,0)^t$. Multiplying  by $\phi(S)$  then  implies that at least one of the $a_1a_2, b_1b_2,$ and $c_1c_2$ equal zero, which gives that a necessary (and sufficient)  condition  for $\phi$ to factor through $\SU_M$ is that
$$a_1 a_2 b_1 b_2 c_1 c_2 = 0.$$ 
This is the same equation (1) that arose in Proposition \ref{p:indecomp}.

\subsection{Factoring through $\SO_\HH$}
\label{ss:factoringSOH}
An $\SO_{\HH}$ subgroup has a $3$-dimensional subrepresentation. Localizing away from $a_1 a_2 b_1 b_2 c_1 c_2 = 0$, which as we have seen factors through $\SU_M$, one can proceed determining Zariski open conditions for ruling such factorization out by using the method outlined in Section \ref{ss:reducible}.

Alternatively, a nontrivial $\SO_{\HH}$ subgroup of $G_2$ has a nontrivial centralizer. Conversely, if a subgroup of $G_2$ has a nontrivial centralizer and is not contained in an $A_2$-subgroup, then it is contained in an $\SO_{\HH}$ subgroup (this follows by the classification of centralizers of elements in $G_2$). The following nine components give rise to subgroups with nontrivial centralizer and therefore the map factors through $\SO_\HH$:
\begin{enumerate}
\item $(x-z, w+y, 2y^2+6z^2-1)$,
\item $(x+z, w-y, 2y^2+6x^2-1)$,
\item $(\sqrt{3}x+y, \sqrt{3}z+w, 2w^2+2y^2-1)$,
\item $(\sqrt{3}x-y, \sqrt{3}z-w, 2w^2+2y^2-1)$,
\item $(w,x, y^2+3z^2-1)$,
\item $(y,z,w^2+3x^2-1)$,
\item $(a_1-b_1, a_2-b_2, 3xz-yw)$ or $(a_1+b_1, a_2+b_2, 3xz-yw)$,
\item $(b_1- c_1, b_2- c_2,2w^2+2w^2-1,2y^2+6z^2-1)$ or\\ $(b_1+ c_1, b_2+ c_2,2w^2+2w^2-1,2y^2+6z^2-1)$,
\item $(a_1- c_1, a_2- c_2, xy+zw)$ or $(a_1+ c_1, a_2+ c_2, xy+zw)$.
\end{enumerate}
Notice that the equations above explain in more detail how parts (2), (3) and (4) in Proposition \ref{p:indecomp} arise.

\subsection{Factoring through a parabolic}

The parabolic subgroups of $G_2$ are associated to isotropic subspaces of $\OO_7$. 

By running the calculations as described in section \ref{ss:reducible}, and localizaing at $a_1$, $a_2$, $b_1$, $b_2$, $c_1$, and $c_2$ one can easily show that if there is a $1$ or $3$ dimensional isotropic subspace then $a_1a_2b_1b_2c_1c_2=0$, and hence the map already factors through an $A_2$ subgroup above.

Similarly, if one localizes at   $a_1,a_2,b_1,b_2,c_1,c_2$ and $a_1-b_1$ ,$a_2-b_2$, $a_1+b_1$, $a_2+b_2$, $a_1-c_1$, $a_2-c_2$, $a_1+c_1$, $a_2+c_2$, $b_1-c_1$, $b_2-c_2$,$b_1+c_1$, and $b_2+c_2$, one finds that any representation with a two dimensional isotropic subspace factors through a group satisfying one of these conditions.

To understand why we include for example $a_2+b_2$ notice that $a_2+b_2=0$ implies $a_1=\pm b_1$, and any point with $a_1=b_1$, and $a_2=-b_2$ would imply $c_2=0$, and that points with $a_1=-b_1$, and $a_2=-b_2$ are equivalent to points $a_1=b_1$, and $a_2=b_2$ since $\phi$ is invariant under negating each of $(a_1,a_2)$, $(b_1,b_2)$, or $(c_1,c_2)$.

This would already give sufficient conditions to not factor through a parabolic, however, we choose to refine these by now imposing the respective conditions, for example $b_1=c_1$ and $b_2=c_2$. Note that $b_1=\pm c_1$ implies $b_2=\pm c_2$. We continue to localize away from $a_1-b_1$ ,$a_2-b_2$, $a_1+b_1$, $a_2+b_2$, $a_1-c_1$, $a_2-c_2$, $a_1+c_1$, $a_2+c_2$ and add the equation $2a_1+1$, which the equations would now imply is non-vanishing.

Repeating this for the various options, and also considering the case $a_1=b_1=c_1$ and $a_2=b_2=c_2$, we find that the following ideals give subgroups factoring through parabolics:
\begin{enumerate}
\item $(a_1 - b_1, a_2 - b_2, 2wy - 6xz + 1)$ or $(a_1 + b_1, a_2 + b_2, 2wy - 6xz + 1)$,
\item $(a_1 - b_1, a_2 - b_2, 2wy - 6xz - 1)$ or $(a_1 + b_1, a_2 + b_2, 2wy - 6xz - 1)$,
\item $(a_1 - c_1, a_2 - c_2, 6xy + 6wz + \sqrt{3})$ or $(a_1 + c_1, a_2 + c_2, 6xy + 6wz + \sqrt{3})$,
\item $(a_1 - c_1, a_2 - c_2, 6xy + 6wz - \sqrt{3})$ or $(a_1 + c_1, a_2 + c_2, 6xy + 6wz - \sqrt{3})$,
\item $(b_1 - c_1, b_2 - c_2, y^2 + 3z^2 - 1, w^2 + 3x^2)$ or $(b_1 + c_1, b_2 + c_2, y^2 + 3z^2 - 1, w^2 + 3x^2)$,
\item $(b_1 - c_1, b_2 - c_2, y^2 + 3z^2, w^2 + 3x^2-1)$ or $(b_1 + c_1, b_2 + c_2, y^2 + 3z^2, w^2 + 3x^2-1)$,
\item $(a_1-b_1, a_1-c_1, a_2-b_2, a_2 - c_2)$, $(a_1+b_1, a_1-c_1, a_2+b_2, a_2 - c_2)$, \\
  $(a_1-b_1, a_1+c_1, a_2-b_2, a_2 + c_2)$, or $(a_1+b_1, a_1+c_1, a_2+b_2, a_2 + c_2)$.
\end{enumerate}

\subsection{Factoring through $\PGL_2$}
\label{ss:factoringPGL2}
One of the most difficult cases to analyze below is the case of representations that factor through a copy of $\PGL_2$. Such representations involve matrices whose characteristic polynomial factors in the following form:
\[
P(X) = (X-1)(X-\lambda)(X-\lambda^2)(X-\lambda^3)(X-\lambda^{-1})(X-\lambda^{-2})(X-\lambda^{-3}).
\]
If we write this polynomial as
\[
  P(X) = X^7+g_1X^6+g_2X^5+g_3X^4-g_3X^3-g_2X^2-g_1X-1,
\]
then these coefficients satisfy the following equations:
\begin{enumerate}
\item $g_1^2-g_1-g_2+g_3 = 0$,
\item $g_1^2g_2-g_1g_2^2-g_2^3+2g_1^2g_3-g_1g_3^2 = 0$,
\item $g_2^4-g_1g_2^2g_3-g_2^3g_3+g_1g_2g_3^2-g_1g_3^3-g_2^3+g_2^2g_3+2g_2g_3^2-g_3^3 = 0$.
\end{enumerate}
Note that equation (1) is the same as for those polynomials arising from $G_2$, as expected. If we use that equation to eliminate $g_3$, then $g_1$ and $g_2$ satisfy the higher degree equation:
\[
g_1^5-2g_1^3g_2-g_1^3-g_1^2g_2+2g_1g_2^2+g_2^3 = 0.
\]
It follows that as long as the coefficients of the characteristic polynomial of $\phi(T)$ --- or any element $\phi(\gamma)$ --- avoid this hypersurface, then our representation does not factor through a $\PGL_2$. This is a computation that is easy to check in examples on a computer, say, but which seems difficult to express more simply in an explicit fashion in terms of the variables $a_1$, $a_2$, $b_1$, $b_2$, $c_1$, $c_2$, $w$, $x$, $y$, $z$.

\subsection{Factoring through a subgroup of order independent of $p$}
\label{ss:finite}
From the classification of maximal subgroups of $G_2(\FF_p)$ whose order is independent of $p$, for $p\neq 11$, every element in such a subgroup must have order equal to one of the following:
\[
1,~2,~3,~4,~6,~7,~8,~9,~12,~13
\]
When $p=11$, it is also possible to find elements of order $5$, $15$ and $19$, which arise from the Janko group $J_1$. One can classify the characteristic polynomials of elements of these orders inside $G_2$ --- see Appendix \ref{appendix1} for this data. To ensure that $\phi$ does not have image contained inside one of these maximal subgroup, we simply impose the conditions that $\phi(T)$ does not have characteristic polynomial equal to any of these possibilities.

Recall from Remark \ref{r:phiT} that if $u_1 = -\Tr(\phi(T))$ then
\[
  u_1=3(a_2^2+b_2^2+c_2^2)-5.
\]
Therefore, in the cases where $p\neq 11$, we can easily rule out the cases of factoring through a finite subgroup of $G_2$ by imposing the conditions $u_1\neq \tau$ for each value of $\tau$ arising in the $X^6$ term of the characteristic polynomials in Appendix \ref{appendix1}. When $p=11$ we use slightly more data from the characteristic polynomial, to increase the number of examples that one could consider.

\subsection{Proof of Theorem \ref{t:main2}}
Recall that $\widetilde X_0$ is described in Theorem \ref{thm:genericfiniteall} above. We have discussed the classification of maximal subgroups of $G_2(\FF_p)$ up to conjugacy. Condition (1) of Theorem \ref{t:main2} amounts to ensuring that the image of $\phi$ does not land inside of an $A_2$ subgroup. Condition (2) ensures that $\phi$ does not have image inside of a parabolic subgroup. Then conditions (2) through (4) together imply that the image is not contained in an $\SO_{\HH}$ subgroup. Condition (5) rules out the case of $\PGL_2$ subgroups. Finally, condition (6) rules out most of the remaining subgroups whose orders are independent of $p$, using the data from Appendix \ref{appendix1}, and condition (7) treats the case of the Janko group $J_1$ when $p=11$. Since we have restricted to considering representations taking values in $G_2(\FF_p)$ for primes $p$, we can ignore the remaining maximal subgroups showing up in \cite{kleidman} that only arise when considering larger finite fields of prime power order, such as $G_2(\FF_p)\subseteq G_2(\FF_q)$. Therefore, the algebraic conditions in Theorem \ref{t:main2} are enough to ensure that $\phi$ surjects on $G_2(\FF_p)$.

\appendix
\section{The maximal subgroups of $G_2(\FF_q)$ of bounded order}
\label{appendix1}

Most of the maximal subgroups discussed in Section \ref{s:maximal} above fit into families that depend on $q$, but there are a number of examples from Section \ref{ss:finite} whose orders are independent of $q$. In this appendix we collect data about the possible characteristic polynomials of elements in these groups for use in the formulation and proof of Theorem \ref{t:main2}. More precisely, the characteristic polynomials appearing in the tables below are used to formulate the conditions in parts (6) and (7) of Theorem \ref{t:main2}. 

In this Appendix we now set $q=p^\alpha$ and $p\neq 3$ and to ease notation write $G_2 = G_2(\FF_q)$ below.

\subsection{$2^3 \cdot L_3(2)$} The group $G_2$ contains maximal subgroups isomorphic with $2^3\cdot L_3(2) = 2^3\cdot L_2(7)$, a group of order $2^{6}\cdot 3 \cdot 7$. A representative for this conjugacy class of subgroups is described as the subgroup  which permutes the lines spanned by $i$, $j$, $ij$, $k$, $ik$, $jk$, and  $(ij)k$ in the octonions. This subgroup contains two conjugacy classes of elements of order $2$ and a unique conjugacy class of elements of order $3$.
 
When the image of $\phi$ is contained in an $2^3\cdot L_3(2)$ subgroup of $G_2$, the possible orders for $\phi(T)$ are $2$, $3$, $4$, $6$, $7$, and $8$ which is all of the non-trivial orders of elements in the group. The corresponding characteristic polynomials are contained in Table \ref{L32}.

\begin{center}
  \begin{table}[h!]
    \label{L32}
    \begin{tabular}{l|l}
      Characteristic polynomial of $\phi(T)$ & Order of $\phi(T)$\\\hline
      $ x^7 + x^6 - 3x^5 - 3x^4 + 3x^3 + 3x^2 - x - 1$& $2$\\
      $x^7 - x^6 - 2x^4 + 2x^3 + x - 1$& $3$\\
      $ x^7 - 3x^6 + 5x^5 - 7x^4 + 7x^3 - 5x^2 + 3x - 1$ & $4$\\
      $x^7 + x^6 + x^5 + x^4 - x^3 - x^2 - x - 1$ & $4$\\
      $x^7 + x^6 - x - 1$ & $6$\\
      $x^7-1$& $7$\\
      $x^7 - x^6 + x^5 - x^4 + x^3 - x^2 + x - 1$& $8$\\
      $x^7 + x^6 - x^5 - x^4 + x^3 + x^2 - x - 1$& $8$
    \end{tabular}
\caption{Characteristic polynomials for $2^3\cdot L_3(2)$.}
  \end{table}
\end{center}

\subsection{$L_2(8)$}  The group $G_2$ also contains subgroups isomorphic with $L_2(8) =  {^2}G_2(3)'$, a group of order $2^3\cdot3^2\cdot7$.  These groups only arise if $\bF[\alpha]\subseteq\bF_q$ with $\alpha^3-3\alpha+1=0$ and, these subgroups are only maximal if $\bF[\alpha] = \bF_q$. There is a unique conjugacy class of elements of order $2$, and a unique conjugacy class of elements of order $3$.

%$a, b | a^2 = b^3 = (ab)^7 = (ab(abab^{-1})^3)^5 = (ab(abab^{-1})^6abab(ab^{-1})^2)^2 = 1$

To express this in characteristic zero will require something acting as a $9$th root of unity in our torus $A$, introduced in Section \ref{ssec:mt}. Note that our torus contains $3$rd roots of unity and $\alpha$, with $\alpha^3-3\alpha+1=0$, allows us to express the $9$th roots of unity in the torus $A$. The possible orders for $\phi(T)$ are $2$, $7$, and $9$. The non-trivial orders of elements in the group are $2$, $3$, $7$ and $9$. Options for characteristic polynomials are found in Table \ref{L28}. The line in Table \ref{L28} with $\alpha$ appearing really corresponds to three examples, where $w$ can be replaced by its Galois conjugates $\alpha^2-\alpha-2$ and $-\alpha^2+2$.

\begin{center}
  \begin{table}[h!]
    \label{L28}
    \begin{tabular}{l|l}
      Characteristic polynomial of $\phi(T)$ & Order of $\phi(T)$\\\hline
     $ x^7 + x^6 - 3x^5 - 3x^4 + 3x^3 + 3x^2 - x - 1$ & $2$\\
     $x^7 - x^6 - 2x^4 + 2x^3 + x - 1$ & $3$\\
     $x^7-1$ & $7$\\
       $x^7 + \alpha x^6 + (\alpha^2-1)x^5 + (\alpha - 1)x^4 + (1-\alpha)x^3 + (1-\alpha^2)x^2 - \alpha x - 1$& $9$
    \end{tabular}
\caption{Characteristic polynomials for $L_2(8)$.}
  \end{table}
\end{center}

\subsection{$L_2(13)$} Inside $G_2$ one also finds maximal subgroups isomorphic with $L_2(13)$, which has order $2^2\cdot3\cdot7\cdot13$. These examples only arise when $\bF_p[\sqrt{13}]\subseteq\bF_q$, and these subgroups are maximal only if $\bF_q = \bF_p[\sqrt{13}]$. This subgroups contains a unique conjugacy class of elements of order $2$, and unique conjugacy class of elements of order $3$.

The possible orders for $\phi(T)$ inside this subgroup are $2$, $3$, $6$, $7$, and $13$, which includes all of the possible non-trivial orders of elements in the subgroup.  Options for characteristic polynomials are found in Table \ref{L213}.

\begin{center}
  \begin{table}[h!]
    \label{L213}
    \begin{tabular}{l|l}
      Characteristic polynomial of $\phi(T)$ & Order of $\phi(T)$\\\hline
      $ x^7 + x^6 - 3x^5 - 3x^4 + 3x^3 + 3x^2 - x - 1$& $2$\\
      $x^7 - x^6 - 2x^4 + 2x^3 + x - 1$& $3$\\
      $x^7 + x^6 - x - 1$& $6$\\
      $x^7-1$ & $7$\\
      $ x^7 + (1/2)(\pm \sqrt{13}-1)x^6 + (1/2)(3\mp \sqrt{13})x^5 + (1/2)(\pm\sqrt{13}-5)x^4$ & $13$\\
      $- (1/2)(\pm \sqrt{13}-5)x^3 -  (1/2)(3\mp \sqrt{13})x^2 -  (1/2)(\pm \sqrt{13}-1)x - 1$ & 
    \end{tabular}
\caption{Characteristic polynomials for $L_2(13)$.}
  \end{table}
\end{center}

\subsection{$U_3(3):2$}
There also exist maximal subgroups of $G_2$ isomorphic with $U_3(3):2 = G_2(2)$, which has order $2^6\cdot 3^3 \cdot 7$. These examples are only maximal if $q=p$ is prime. They contain a unique conjugacy class of elements of order $2$, and two conjugacy classes of elements of order $3$.

Inside these subgroups, the possible orders for $\phi(T)$ are $2$, $3$, $4$, $6$, $7$, $8$, and $12$, which includes all of the possible non-trivial orders of elements in the group. Options for the characteristic polynomials are contained in Table \ref{U32}
\begin{center}
  \begin{table}[h!]
    \label{U32}
    \begin{tabular}{l|l}
      Characteristic polynomial of $\phi(T)$ & Order of $\phi(T)$\\\hline
      %$ x^7 - 7x^6 + 21x^5 - 35x^4 + 35x^3 - 21x^2 + 7x - 1$& $1$\\
      $ x^7 + x^6 - 3x^5 - 3x^4 + 3x^3 + 3x^2 - x - 1$& $2$\\
      $ x^7 + 2x^6 + 3x^5 + x^4 - x^3 - 3x^2 - 2x - 1$& $3$\\
      $ x^7 - x^6 - 2x^4 + 2x^3 + x - 1$& $3$\\
      $ x^7 - 3x^6 + 5x^5 - 7x^4 + 7x^3 - 5x^2 + 3x - 1$& $4$\\
      $x^7 + x^6 + x^5 + x^4 - x^3 - x^2 - x - 1$& $4$\\
      $x^7 + x^6 - x - 1$& $6$\\
      $x^7 - 2x^6 + 3x^5 - 3x^4 + 3x^3 - 3x^2 + 2x - 1$& $6$\\
      $x^7 - 1$& $7$\\
      $x^7 - x^6 + x^5 - x^4 + x^3 - x^2 + x - 1$& $8$\\
      $x^7 + x^6 - x^5 - x^4 + x^3 + x^2 - x - 1$& $8$\\
      $ x^7 - x^5 - x^4 + x^3 + x^2 - 1$& $12$
    \end{tabular}
\caption{Characteristic polynomials for $U_3(2):2$.}
  \end{table}
\end{center}

\subsection{$J_1$} When $p=11$, the group $G_2$ contains maximal subgroups isomorphic with the Janko group $J_1$. This group has order $2^3\cdot3\cdot5\cdot7\cdot11\cdot19$ and it contains unique conjugacy classes of elements of order $2$ and $3$. A priori, just by considering the possible choices for conjugacy class representatives of these order $2$ and $3$ classes, the possible orders for $\phi(T)$ are $2$, $3$, $5$, $6$, $7$, $10$, $11$, $15$, $19$. The corresponding characteristic polynomials are found in Table \ref{J1}.

\begin{center}
  \begin{table}[h!]
    \label{J1}
    \begin{tabular}{l|l}
    Characteristic polynomial of $\phi(T)$ & Order of $\phi(T)$\\\hline
$x^{7} + x^{6} + 8 x^{5} + 8 x^{4} + 3 x^{3} + 3 x^{2} + 10 x + 10$ & $2$ \\
$x^{7} + 10 x^{6} + 9 x^{4} + 2 x^{3} + x + 10$ & $3$ \\
$x^{7} + 8 x^{6} + x^{5} + 10 x^{2} + 3 x + 10$ & $5$ \\
$x^{7} + 4 x^{6} + x^{5} + 10 x^{2} + 7 x + 10$ & $5$ \\
$x^{7} + x^{6} + 10 x + 10$ & $6$ \\
$x^{7} + 10$ & $7$ \\
$x^{7} + 6 x^{6} + 9 x^{5} + x^{4} + 10 x^{3} + 2 x^{2} + 5 x + 10$ & $10$ \\
$x^{7} + 2 x^{6} + 6 x^{5} + 4 x^{4} + 7 x^{3} + 5 x^{2} + 9 x + 10$ & $10$ \\
$x^{7} + 4 x^{6} + 10 x^{5} + 9 x^{4} + 2 x^{3} + x^{2} + 7 x + 10$ & $11$ \\
$x^{7} + 6 x^{6} + 3 x^{5} + 6 x^{4} + 5 x^{3} + 8 x^{2} + 5 x + 10$ & $15$ \\
$x^{7} + 3 x^{6} + 2 x^{5} + 7 x^{4} + 4 x^{3} + 9 x^{2} + 8 x + 10$ & $15$ \\
$x^{7} + 9 x^{6} + 5 x^{4} + 6 x^{3} + 2 x + 10$ & $19$ \\
$x^{7} + 7 x^{6} + 10 x^{5} + x^{4} + 10 x^{3} + x^{2} + 4 x + 10$ & $19$ \\
$x^{7} + 4 x^{6} + 7 x^{5} + 6 x^{4} + 5 x^{3} + 4 x^{2} + 7 x + 10$ & $19$ 
    \end{tabular}
    \caption{Characteristic polynomials for $J_1$.}
\end{table}
\end{center}

\bibliography{refs}
\bibliographystyle{plain}
\end{document}